\theoremstyle{plain}
\newtheorem{theorem}{Theorem}[section]
\newtheorem{proposition}[theorem]{Proposition}
\newtheorem{lemma}[theorem]{Lemma}
\newtheorem{corollary}[theorem]{Corollary}
\theoremstyle{definition}
\newtheorem{definition}[theorem]{Definition}
\newtheorem{example}[theorem]{Example}
\newtheorem{remark}[theorem]{Remark}
\numberwithin{equation}{section}
\begin{document}
\title{Compactifications of the Moduli spaces of Newton Maps}
%\\(Draft--not for distribution)
\author{Hongming Nie}
\address{Department of Mathematics, Indiana University\\
831 East Third Street, Bloomington, Indiana 47405, USA}
 \email{nieh@indiana.edu}
%\subjclass[2000]{Primary 37P45, 37F05}
\date{\today}
%\keywords{Newton maps, compactifications, Berkovich spaces}
\begin{abstract}
We study various compactifications of moduli space of Newton maps. Mainly, we focus on GIT compactifiaction and Deligne-Mumford compactification. Then we explore the relations among these compactifications.
\end{abstract}
\maketitle

\section{Introduction}
For a degree $d\ge 2$ monic polynomial $P$ with distinct roots, its Newton maps is defined by 
$$N_P(z)=z-\frac{P(z)}{P'(z)}.$$
Let $\mathrm{NM}_d$ be the space of degree $d$ Newton maps.  Note $z=\infty$ is the unique repelling fixed point for Newton maps. So  the moduli space of degree $d$ Newton maps is naturally defined by
$$\mathrm{nm}_d:=\mathrm{NM}_d/\mathrm{Aut}(\mathbb{C}).$$
In the preceding article \cite{Nie-1}, we studied the limiting behaviors of holomorphic families of Newton maps. Our main tool was Berkovich dynamics. In the present article, inspired by the recent developments in the compactifications of moduli spaces, we study various compactifications of moduli spaces $\mathrm{nm}_d$.  We also use Berkovich space to relate different compactifications.\par
The space of degree $d\ge 2$ complex rational maps, denoted by $\mathrm{Rat}_d$, is a dense open subset of $\mathbb{P}^{2d+1}$. Its moduli space $\mathrm{rat}_d:=\mathrm{Rat}_d/\mathrm{PSL}_2(\mathbb{C})$, modulo the action by conjugation, is a complex orbifold of dimension $2d-2$. To compactify the space $\mathrm{rat}_2$ of quadratic rational maps, Milnor \cite{Milnor93} considered the symmetry functions $\sigma_1,\sigma_2$ and $\sigma_3$ of the multipliers of the three fixed points and showed the pair $(\sigma_1,\sigma_2)$ parametrizes the space $\mathrm{rat}_2$. In fact, it defines an isomorphism $\mathrm{rat}_2\cong\mathbb{C}^2$ \cite[Lemma 3.1]{Milnor93}. Hence, it induces a compactification $\overline{\mathrm{rat}}_2$ of the space $\mathrm{rat}_2$ and  $\overline{\mathrm{rat}}_2\cong\mathbb{P}^2$. The boundary $\overline{\mathrm{rat}}_2\setminus\mathrm{rat}_2$ corresponds to the ideal points whose multipliers are of the form $\langle a,1/a,\infty\rangle$, where $a\in\mathbb{C}$. These ideal points can be identified with the conjugacy classes of maps $z\to az+1$. For general cases, Silverman \cite{Silverman98} applied the geometric invariant theory (GIT) to study the semistable loci $\mathrm{Rat}_d^{ss}$ and stable loci $\mathrm{Rat}_d^s$ in $\mathbb{P}^{2d+1}$ and gave compactifications $\overline{\mathrm{rat}}_d$ for $d\ge 2$ by investigating the categorical quotients and geometric quotients. The boundary $\overline{\mathrm{rat}}_d\setminus\mathrm{rat}_d$ can be identified with the equivalent classes of degenerate degree $d$ rational maps under GIT quotient.\par
For an element $[f]\in\mathrm{rat}_d$, we have $[f^n]\in\mathrm{rat}_{d^n}$. Moreover, the iterate maps 
$$\Phi_n:\mathrm{rat}_d\to\mathrm{rat}_{d^n},$$ 
sending $[f]$ to $[f^n]$, are regular for any $d\ge 2$ and $n\ge 1$. However, $\Phi_n$ can not extend continuously to maps from $\overline{\mathrm{rat}}_d$ to $\overline{\mathrm{rat}}_{d^n}$ for any $d\ge 2$ and $n\ge 2$ \cite[Section 10]{DeMarco07}. \par 
Since the space $\mathrm{NM}_d$ of degree $d$ Newton maps is a subset of $\mathrm{Rat}_d$, the moduli space $\mathrm{nm}_d$ sits in the space $\mathrm{rat}_d$. Therefore, we obtain a compactification $\overline{\mathrm{nm}}_d$ by taking the closure for $\mathrm{nm}_d$ in $\overline{\mathrm{rat}}_d$. Our first result is
\begin{theorem}\label{theorem-iterate-continuous-moduli}
The restrictions $\Phi_n|_{\mathrm{nm}_d}$ extend naturally and continuously to rational maps
$$\Phi_n|_{\overline{\mathrm{nm}}_d}:\overline{\mathrm{nm}}_d\to\overline{\mathrm{rat}}_{d^n}$$
for any $d\ge 2$ and $n\ge 2$.
\end{theorem}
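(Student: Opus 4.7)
The plan is to combine an explicit algebraic description of the iterate map on a slice with the classification of Newton-map degenerations from \cite{Nie-1} to establish the extension at each boundary point.

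First, I would fix a convenient slice for the conjugation action on $\mathrm{NM}_d$. Since every Newton map fixes $\infty$ as a repelling fixed point and fixes the $d$ roots of $P$ as (super-)attracting fixed points, the affine group acts on $\mathrm{NM}_d$ via its action on the roots, and I can take a slice by normalizing $P$ to be monic with, say, one root at $0$ and the sum of roots equal to $1$. On this slice the coefficients of $N_P$ are polynomial in the remaining root parameters, and so are the coefficients of any iterate $N_P^n$. This already yields an algebraic extension of $\Phi_n$ to a rational map $\overline{\mathrm{nm}}_d\dashrightarrow\overline{\mathrm{rat}}_{d^n}$ whose indeterminacy locus is a closed subvariety of the boundary $\overline{\mathrm{nm}}_d\setminus\mathrm{nm}_d$.

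Next, I would verify that this rational extension in fact has no indeterminacy. Take a boundary point $[N^*]$ and an arbitrary holomorphic one-parameter family $\{[N_{P_t}]\}$ with $[N_{P_0}]=[N^*]$. Viewing this family over the field of formal Puiseux series, I obtain a non-archimedean Newton map whose Berkovich dynamics are described in \cite{Nie-1}. The combinatorial data of the degeneration, namely the tree of clusters of the roots of $P_t$ together with the associated rescaling limits at the tree vertices, determines the rescaling limits of $N_{P_t}$; by functoriality of rescalings under iteration, it also determines the rescaling limits of $N_{P_t}^n$. Converting these rescaling limits into GIT limits in $\overline{\mathrm{rat}}_{d^n}$, I would show that $\lim_{t\to 0}[N_{P_t}^n]$ exists and depends only on this combinatorial data, hence only on $[N^*]$ and not on the family.

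The main obstacle is this second step: excluding the ``hole'' phenomenon of \cite[Section 10]{DeMarco07}, where for a generic family $f_t$ the limit of $[f_t^n]$ can depend on more than $\lim f_t$ because of hidden cancellations in numerator and denominator of the iterate. For Newton maps the obstruction disappears precisely because $\infty$ remains a repelling fixed point throughout any such family, which on the Berkovich side keeps the Gauss point as a repelling fixed point and prevents the iterate from losing degree in the limit. Once this continuity is in place, the algebraic construction on the slice upgrades the continuous extension to a morphism of projective varieties, yielding the claimed rational map $\overline{\mathrm{nm}}_d\to\overline{\mathrm{rat}}_{d^n}$.
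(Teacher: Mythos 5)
Your overall strategy — algebraize on a slice, then use Berkovich degenerations of the family to rule out indeterminacy — is genuinely different from the paper's, which argues directly with GIT (semi)stability. But as written there are several real gaps, not just missing details.

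\textbf{The slice does not obviously compute $\overline{\mathrm{nm}}_d$.} Normalizing roots gives a parameter variety whose natural projective closure is \emph{not} the GIT compactification. For odd $d$, $\overline{\mathrm{nm}}_d$ collapses the entire strictly-semistable boundary to a single point $[\phi_d]_{\mathrm{GIT}}$ (Proposition~\ref{semi-singleton}), while your slice compactification keeps those boundary strata separate; in fact what you are describing is much closer to the Deligne--Mumford compactification of Section~\ref{Deligne-Mumford}, where the paper's map $\widehat{\mathrm{nm}}^\ast_d\to\overline{\mathrm{nm}}_d$ is not injective. So ``the algebraic construction on the slice upgrades the continuous extension'' has a nontrivial hypothesis hidden in it: that the iterate on the slice \emph{descends} through the collapse to $\overline{\mathrm{nm}}_d$. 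That descent is precisely what needs proof, and you never address it.

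\textbf{The mechanism you offer for excluding the hole phenomenon is not the right one.} The obstruction for $\Phi_n:\overline{\mathrm{rat}}_d\dashrightarrow\overline{\mathrm{rat}}_{d^n}$ is two-fold: (i) a boundary point may lie in $I(d)$, where even $f^n$ itself is ill-defined; and (ii) even off $I(d)$, the depth of holes can grow under iteration so that $f^n$ fails to be semistable. Saying that ``$\infty$ remains repelling, so the Gauss point stays a repelling fixed point and the iterate does not lose degree'' addresses neither. For (i), the real reason is that every hole of a degenerate Newton map is a fixed point of its reduction $\widehat{N}$, which forces $\deg\widehat{N}\ge1$ and hence $\overline{\mathrm{NM}}_d\cap\mathrm{Rat}_d^{ss}\cap I(d)=\emptyset$ (Proposition~\ref{Newton-Seim-Indeter}); this has nothing to do with the multiplier at $\infty$. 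For (ii), one must actually \emph{count} the depth of a hole of $N^n$: the paper does this explicitly (Propositions~\ref{Newton-iterate-stable} and \ref{Newton-iterate-semi}), exploiting that every hole has depth bounded by roughly $(d-1)/2$ and is a fixed point, so the accumulated depth of $N^n$ stays $\le (d^n-1)/2$. Nothing in ``the Gauss point stays repelling'' yields this bound; a repelling type-II fixed point is compatible with very uneven hole distributions, and degree is not what is at risk.

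\textbf{The well-definedness on GIT classes is not established.} You assert that the limit of $[N_{P_t}^n]$ ``depends only on combinatorial data, hence only on $[N^*]$.'' This inference is exactly the content of the theorem, not an aside. Two families converging to the same GIT point $[N^*]$ can have genuinely different Berkovich trees and different subalgebraic reductions (the paper gives an explicit cubic example of a map $f$ with $[f]_{\mathrm{GIT}}=[N]_{\mathrm{GIT}}$ but $f$ not conjugate to any Newton-map limit). What saves the day is that iteration sends semistable Newton limits to semistable maps and that the entire strictly-semistable stratum of $\overline{\mathrm{NM}}_{d^n}$ again collapses to one point (Proposition~\ref{semi-singleton} for $d^n$), so the ambiguity disappears at the target. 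Your proposal never identifies or uses this collapsing, which is indispensable for the odd-degree case.

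In short, the Berkovich picture you invoke is a legitimate lens (the paper itself uses it for Theorem~\ref{theorem-Deligne-Mumford-GIT}), but for this particular theorem it does not substitute for the (semi)stability bookkeeping: you still need to show (a) Newton limits avoid $I(d)$, (b) iteration preserves (semi)stability with the explicit depth estimate, and (c) strict semistability collapses, and none of these is supplied by the argument you sketch.
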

Let $\overline{\mathrm{NM}}_d$ be the compactification of $\mathrm{NM}_d$ in $\mathbb{P}^{2d+1}$. All the strictly semistable points in the boundary $\partial\overline{\mathrm{NM}}_d$ have the same image under the GIT quotients, see Proposition \ref{semi-singleton}. Moreover, the iterate map $f\to f^n$ is well defined and preserves the (semi)stability of the points in the boundary of $\overline{\mathrm{NM}}_d\cap\mathrm{Rat}_d^{ss}$, see Proposition \ref{Newton-iterate-semi}. All these properties push Theorem \ref{theorem-iterate-continuous-moduli} holds. \par
%For the indeterminacy loci of the extensions $\overline{\mathrm{rat}}_d\dasharrow\overline{\mathrm{rat}}_{d^n}$, DeMarco described precisely for quadratic case \cite[Theorem 5.1]{DeMarco07}. Theorem \ref{theorem-iterate-continuous-moduli} claims that $\overline{\mathrm{nm}}_d$ is away from the indeterminacy loci. Recently, Kiwi and the author \cite{Kiwi17}  announced a complete description of  these loci in general case.\par
As applications, we consider the compactifications by inverse limits and barycentered maximal measures, which have been introduced to resolve the indeterminacy of each rational map $\Phi_n:\overline{\mathrm{rat}}_d\dasharrow\overline{\mathrm{rat}}_{d^n}$, see \cite{DeMarco07} for details.\par 
Consider the map 
$$(\mathrm{Id},\Phi_2,\cdots,\Phi_n):\mathrm{rat}_d\to\mathrm{rat}_d\times\mathrm{rat}_{d^2}\times\cdots\times\mathrm{rat}_{d^n}.$$
Let $\Gamma_n$ be the closure of the image of $\mathrm{rat}_d$ in $\overline{\mathrm{rat}}_d\times\overline{\mathrm{rat}}_{d^2}\times\cdots\times\overline{\mathrm{rat}}_{d^n}$. There is a natural projection from $\Gamma_{n+1}$ to $\Gamma_n$. So we can take the inverse limit over $n$. We get a compactification 
$$\overleftarrow{\mathrm{rat}}_d:=\lim\limits_{\longleftarrow}\Gamma_n.$$
Via identifying $[f]$ with $([f],([f],[f^2]),\cdots)$, the space $\mathrm{rat}_d$ is a dense open subset of $\overleftarrow{\mathrm{rat}}_d$. Moreover, the map $\Phi_n:\mathrm{rat}_d\to\mathrm{rat}_{d^n}$ can extends continuously to a map from $\overleftarrow{\mathrm{rat}}_d$ to $\overleftarrow{\mathrm{rat}}_{d^n}$. To understand the structure of the space $\overleftarrow{\mathrm{rat}}_d$, DeMarco \cite{DeMarco07} studied the barycentered maximal measures and constructed a concrete model for quadratic case. For a given rational map $f\in\mathrm{Rat}_d$, the support of the maximal measure $\mu_f$ is equal to the Julia set of $f$. We can choose a representative $f$ for the conjugate class $[f]\in\mathrm{rat}_d$ such that the conformal barycenter of $\mu_f$ is at the origin. In fact, such representative is unique up to conjugation by an element in $\mathrm{SO}(3)$. Thus, it induces a continuous map
$$\Theta:\mathrm{rat}_d\to M_{bc}^1/\mathrm{SO}(3),$$
where $M_{bc}^1$ is the space of barycentered probability measures on $\widehat{\mathbb{C}}$, sending $[f]$ to $[\mu_f]$. Then considering the closure of the graph of $\Theta$, we can define a compactification
$$\widetilde{\mathrm{rat}}_d:=\overline{\mathrm{Graph}(\Theta)}\subset\overline{\mathrm{rat}}_d\times\overline{M_{bc}^1}/\mathrm{SO}(3)$$
for $\mathrm{rat}_d$.\par 
For quadratic case, the compactifications $\overleftarrow{\mathrm{rat}}_2$ and $\widetilde{\mathrm{rat}}_2$ are canonically homeomorphic \cite[Theorem 1.1]{DeMarco07}. But it is not true in general, see \cite{DeMarco07}. For Newton maps, we can define the corresponding compactifications $\overleftarrow{\mathrm{nm}}_d$ and $\widetilde{\mathrm{nm}}_d$. And we show
\begin{theorem}\label{theorem-inverse-limit-compactify}
\textit{For any $d\ge 2$, then compactifications $\overleftarrow{\mathrm{nm}}_d$ and $\widetilde{\mathrm{nm}}_d$ are both canonically homeomorphic to the compactification $\overline{\mathrm{nm}}_d$.}
\end{theorem}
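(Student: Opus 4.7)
The strategy is to reduce both homeomorphism claims to Theorem~\ref{theorem-iterate-continuous-moduli} together with a continuous extension of the barycentered-measure map $\Theta$ to the boundary of $\overline{\mathrm{nm}}_d$.

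\emph{Step 1: the inverse-limit compactification.} Let $\Gamma_n^{\mathrm{nm}}$ denote the Newton analogue of $\Gamma_n$, that is, the closure of the image of $\mathrm{nm}_d$ in $\overline{\mathrm{nm}}_d \times \overline{\mathrm{rat}}_{d^2} \times \cdots \times \overline{\mathrm{rat}}_{d^n}$. By Theorem~\ref{theorem-iterate-continuous-moduli} the product map
$$\Psi_n := \bigl(\mathrm{Id},\, \Phi_2|_{\overline{\mathrm{nm}}_d},\, \ldots,\, \Phi_n|_{\overline{\mathrm{nm}}_d}\bigr)\colon \overline{\mathrm{nm}}_d \longrightarrow \overline{\mathrm{nm}}_d \times \overline{\mathrm{rat}}_{d^2} \times \cdots \times \overline{\mathrm{rat}}_{d^n}$$
is continuous; its image is compact, contains the dense set $\Psi_n(\mathrm{nm}_d)$, and hence equals $\Gamma_n^{\mathrm{nm}}$. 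First-coordinate projection provides a continuous left inverse, so $\Psi_n$ is a continuous bijection from a compact space to a Hausdorff space, hence a homeomorphism. These $\Psi_n$ are compatible with the bonding maps of the inverse system, so passing to the limit yields the canonical homeomorphism $\overline{\mathrm{nm}}_d \cong \overleftarrow{\mathrm{nm}}_d$.

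\emph{Step 2: the barycentered-measure compactification.} I would construct a continuous extension $\overline{\Theta}\colon \overline{\mathrm{nm}}_d \to \overline{M_{bc}^1}/\mathrm{SO}(3)$ of $\Theta|_{\mathrm{nm}_d}$. Granting this, the graph of $\overline{\Theta}$ is closed in $\overline{\mathrm{nm}}_d \times \overline{M_{bc}^1}/\mathrm{SO}(3)$ and contains the graph of $\Theta|_{\mathrm{nm}_d}$ densely, so it coincides with $\widetilde{\mathrm{nm}}_d$; first-coordinate projection then supplies the canonical homeomorphism. To build $\overline{\Theta}$, take $[f_k]\to [f_\infty]\in \partial\overline{\mathrm{nm}}_d$, realize the convergence by a holomorphic one-parameter family, and apply the Berkovich-dynamical analysis of \cite{Nie-1} to track the mass distribution of the maximal measures $\mu_{f_k}$. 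The Newton-specific inputs are that $\infty$ is always a repelling fixed point and, by Proposition~\ref{semi-singleton}, every strictly semistable boundary point has the same GIT image; this forces the weak-$*$ limit of $\mu_{f_k}$ to be an atomic configuration determined by $[f_\infty]$ alone, whose $\mathrm{SO}(3)$-normalized barycentered form is $\overline{\Theta}([f_\infty])$.

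\emph{Main obstacle.} Step 1 is essentially formal once Theorem~\ref{theorem-iterate-continuous-moduli} is available. The genuine difficulty is the continuous extension of $\Theta$ in Step 2: for general $\mathrm{rat}_d$ with $d\ge 3$ such an extension fails \cite{DeMarco07}, so the proof must exploit the restricted geometry of Newton degenerations. Concretely, the crux is to show that the weak-$*$ limit of $\mu_{f_k}$ depends only on $[f_\infty]\in\overline{\mathrm{nm}}_d$ and not on the chosen degenerating path. Proposition~\ref{semi-singleton} trivializes this dependence on the strictly semistable locus (all such sequences have the same GIT target and the same Berkovich rescaling data), while on the stable boundary the standard continuity of $\mu_f$ on the geometric quotient $\mathrm{Rat}_d^s/\mathrm{PSL}_2(\mathbb{C})$ already suffices.
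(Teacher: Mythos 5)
Step~1 is essentially the paper's argument for the inverse-limit half of the theorem and is correct. You streamline the verification that the inverse limit is Hausdorff by observing that $\Gamma_n^{\mathrm{nm}}$ sits inside a product of compact Hausdorff GIT quotients and then invoking the compact-to-Hausdorff continuous-bijection lemma; the paper instead checks Hausdorffness of $\overleftarrow{\mathrm{nm}}_d$ directly by splitting it into the geometric-quotient piece together with the single strictly semistable point. Both routes work, and the rest of Step~1 (surjectivity of $\Psi_n$ via density, injectivity via first-coordinate projection, compatibility with the bonding maps) matches the paper.

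Step~2 has a genuine gap, and also a conceptual error in the description of the semistable limit. You correctly isolate the crux --- continuous extension of $\Theta$ across the strictly semistable boundary point --- but the proposed resolution is an assertion rather than a proof. The claim that Proposition~\ref{semi-singleton} ``trivializes this dependence'' does not hold up: that proposition says all strictly semistable boundary points have the same GIT image, but it says nothing about the limit of the barycentered measures $[\mu_{(f_k)_{bc}}]$. Moreover, the phrase ``whose $\mathrm{SO}(3)$-normalized barycentered form is $\overline{\Theta}([f_\infty])$'' is wrong in this case: the limiting measure carries an atom of mass exactly $1/2$, so it is \emph{not} barycentered. The correct statement, and the content of the paper's Proposition~\ref{Max-Mea-Continuous}, is that $[\mu_{(f_k)_{bc}}]$ escapes to the unique point at infinity of the one-point compactification $\overline{M^1_{bc}}/\mathrm{SO}(3)$, represented by $[\delta_0/2+\delta_\infty/2]$. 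Establishing this requires two inputs from DeMarco that your sketch does not supply: the structure theory of $\overline{M^1_{bc}}/\mathrm{SO}(3)$ (it is the one-point compactification of the locally compact Hausdorff space $M^1_{bc}/\mathrm{SO}(3)$, and any non-barycentered element of $\overline{M^1_{bc}}$ is of the form $\tfrac12\delta_a+\tfrac12\delta_{-1/\bar a}$), and the modulus criterion of Lemma~\ref{Mea-Mod}: one must produce round annuli $A_k$ with $\mathrm{Mod}(A_k)\to\infty$ separating $\mu_{(f_k)_{bc}}$-mass close to $1/2$ on each complementary disk. The paper constructs these annuli explicitly from the depth-$(d-1)/2$ hole of the semistable limit; this is the step your outline skips. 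Separately, you also elide Lemma~\ref{Newton-barycenter}: even on the \emph{stable} boundary, defining $\overline{\Theta}$ requires the Newton-specific estimate that all atoms of $\mu_N$ have mass strictly less than $1/2$ for $N\in\overline{\mathrm{NM}}_d\cap\mathrm{Rat}_d^s$, so that the conformal barycenter, and hence the normalized representative $N_{bc}$, actually exists; ``standard continuity on the geometric quotient'' does not by itself give you a well-defined map into $M^1_{bc}/\mathrm{SO}(3)$.
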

The space is better behaved if we mark points. Generally, for rational maps, we can mark either the fixed points, critical points, or both. For the quadratic case, rational maps with marked points were studied in \cite{Milnor93}.
Since the Newton map $N$ is uniquely determined by the roots $\{r_1,\cdots,r_d\}$. We choose to mark these roots. In general, a degree $d\ge 2$ rational map with marked fixed points is 
$$(f,p_1,\cdots,p_{d+1})\in\mathrm{Rat}_d\times(\widehat{\mathbb{C}})^{d+1},$$
where $f\in\mathrm{Rat}_d$ and $p_1,\cdots, p_{d+1}$ are the fixed points of $f$, listed with multiplicities. Denoted by $\mathrm{Rat}_d^{\mathrm{fm}}$ the space of degree $d$ rational maps with marked fixed points and denote by $\mathrm{NM}_d^{\mathrm{fm}}$ the space of degree $d$ Newton maps with marked fixed points. Note $z=\infty$ is the unique repelling fixed point for each Newton map. Thus we consider the subset $\mathrm{NM}_d^\ast$ consisting of $(N,\infty,r_1,\cdots,r_d)\in\mathrm{NM}_d^{\mathrm{fm}}$. The $\mathrm{PSL}_2(\mathbb{C})$-action on $\mathrm{Rat}_d^{\mathrm{fm}}$ induces an action on $\mathrm{NM}_d^\ast$. Since the point $z=\infty$ is special, then it induces an $\mathrm{Aut}(\mathbb{C})$-action on $\mathrm{NM}_d^\ast$. We define the moduli space 
$$\mathrm{nm}_d^\ast:=\mathrm{NM}_d^\ast/\mathrm{Aut}(\mathbb{C}).$$
Thus the natural map $\mathrm{nm}_d^\ast\to\mathrm{nm}_d$, sending $[(N,\infty,r_1,\cdots,r_d)]$ to $N$, is $(d-2)!$ to $1$.\par 
Let $\mathcal{M}_{0,n}$ be the moduli space of Riemann surface of genus $0$ with $n$ marked points. By adding the stable curves with nodes, the moduli space $\mathcal{M}_{0,n}$ has Deligne-Mumford compactification $\widehat{\mathcal{M}}_{0,n}$ consisting of stable marked trees of complex projective lines. Note $\mathcal{M}_{0,d+1}$ is homeomorphic to $\mathrm{nm}_d^\ast$. It induces a compactification 
$$\widehat{\mathrm{nm}}^\ast_d:=\widehat{\mathcal{M}}_{0,d+1}$$
for the space $\mathrm{nm}_d^\ast$. Thus $\widehat{\mathrm{nm}}^\ast_d$ is a smooth irreducible projective variety. Our next result relates the compactifications $\widehat{\mathrm{nm}}^\ast_d$ and $\overline{\mathrm{nm}}_d$.
\begin{theorem}\label{theorem-Deligne-Mumford-GIT}
\textit{For $d\ge 2$, the natural map $\mathrm{nm}^\ast_d\to\mathrm{nm}_d$ extends  continuously to a map
$\widehat{\mathrm{nm}}^\ast_d\to\overline{\mathrm{nm}}_d$.}
\end{theorem}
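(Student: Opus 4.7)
The plan is to verify the continuous extension by a valuative criterion: since $\widehat{\mathrm{nm}}^\ast_d$ is a smooth projective variety and $\overline{\mathrm{nm}}_d$ is a compact Hausdorff space in which $\mathrm{nm}_d$ is dense, it suffices to check that for every holomorphic arc $\gamma\colon\Delta\to\widehat{\mathrm{nm}}^\ast_d$ with $\gamma(\Delta^\ast)\subset\mathrm{nm}^\ast_d$, the composition $\Delta^\ast\to\mathrm{nm}^\ast_d\to\mathrm{nm}_d$ has a limit at $0$ which depends only on $\gamma(0)$. I would therefore fix such a one-parameter family and analyze the resulting degeneration.

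After a finite base change, the roots can be written as Puiseux series $r_1(t),\dots,r_d(t)\in\mathbb{L}:=\bigcup_N\mathbb{C}((t^{1/N}))$, and I would pass to the Berkovich projective line $\mathbb{P}^{1,\mathrm{an}}_\mathbb{L}$. The DM-limit $\gamma(0)$ is then canonically recorded by the convex hull of $\{\infty,r_1(t),\dots,r_d(t)\}$ in $\mathbb{P}^{1,\mathrm{an}}_\mathbb{L}$, namely a stable marked tree whose vertices are type-II points. By the results of \cite{Nie-1}, the limit of $[N_{P_t}]$ in $\overline{\mathrm{nm}}_d$ is controlled by the reduction of the Newton map at the type-II vertex distinguished by $\infty$; this reduction is itself a (possibly lower-degree) Newton map whose roots record the clusters of $r_i(t)$ meeting the $\infty$-vertex. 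A canonical candidate for the extension therefore sends $\gamma(0)$ to the GIT-class of that reduction.

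The delicate point is independence of the particular arc $\gamma$. Two families with the same DM-limit may carry different internal tree structure on the subtrees hanging off the $\infty$-vertex, yielding potentially different reductions whose classes lie in $\partial\overline{\mathrm{nm}}_d$. Here Proposition~\ref{semi-singleton} is decisive: all strictly semistable classes in $\partial\overline{\mathrm{NM}}_d$ collapse to a single point under the GIT quotient, so the ambiguity introduced by subtrees not containing $\infty$ is absorbed and the assignment is well defined. Continuity then follows from the lower-semicontinuity of the Berkovich reduction together with the Hausdorff property of $\overline{\mathrm{nm}}_d$.

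The main technical obstacle I foresee is making precise the dictionary between a stable marked tree in $\widehat{\mathcal{M}}_{0,d+1}$ and the Berkovich reduction data governing the GIT limit; in particular, one must verify that every subtree not carrying the marked point $\infty$ contributes only strictly semistable factors, so that Proposition~\ref{semi-singleton} applies uniformly across all arcs with the same DM-limit. Once this correspondence is established, the analytic content of \cite{Nie-1} supplies the remaining convergence and the theorem follows.
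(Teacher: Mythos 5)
Your overall framework (replace sequences by Puiseux arcs, read the degeneration in the Berkovich projective line over $\mathbb{L}$, and build the extension from the tree data) is the same as the paper's. But the central mechanism you propose is incorrect, and the gap is substantive rather than cosmetic.

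You assert that the limit of $[N_{P_t}]$ in $\overline{\mathrm{nm}}_d$ ``is controlled by the reduction of the Newton map at the type-II vertex distinguished by $\infty$,'' and further that ``every subtree not carrying the marked point $\infty$ contributes only strictly semistable factors.'' Both claims fail. The GIT limit is controlled by the reduction at whichever vertex of the tree of repelling type-II fixed points carries a \emph{(semi)stable} reduction, and that vertex need not be the one adjacent to $\infty$, nor is instability confined to subtrees away from $\infty$. The paper's quintic example makes this explicit: for $r(t)=\{t,2t,1+t,1+2t,t^{-1}\}$, the $\infty$-branch attaches at $\xi_{0,|t^{-1}|}$, yet $\rho_{\xi_{0,|t^{-1}|}}(\mathbf{N})$ is \emph{not} semistable (it has a hole of depth $3>\tfrac{d-1}{2}$), and indeed none of $\xi_{0,|t|},\xi_{1,|t|},\xi_{0,|t^{-1}|}$ carries a semistable reduction; the unique semistable vertex is the interior vertex $\xi_g$, which sits between the clusters. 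So both the identification of the controlling vertex and the claim about where strict semistability occurs are wrong, and the role you assign to Proposition~\ref{semi-singleton} (absorbing ambiguity ``hanging off the $\infty$-vertex'') does not apply.

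What the paper actually proves, and what your argument is missing, is Lemma~\ref{Berkovich-tree-semistable-reduction}: for a family of Newton maps there always exists a vertex $\xi\in V$ with $\rho_\xi(\mathbf{N})\in\mathrm{Rat}_d^{ss}$, and either this stable vertex is unique, or all semistable vertices are only strictly semistable and form a connected set. That dichotomy is the correct well-definedness statement. Proposition~\ref{semi-singleton} is then used only in the second branch of the dichotomy, to collapse the connected strictly-semistable locus to a single GIT class; it is not an ambient ``absorber'' of tree ambiguity. To repair your argument you would need to (i) drop the claim that the $\infty$-vertex governs the reduction, (ii) prove the existence-and-uniqueness statement of Lemma~\ref{Berkovich-tree-semistable-reduction} by a depth count over the tree (a nontrivial combinatorial lemma using Lemma~\ref{Newton-hole}), and (iii) only then invoke Proposition~\ref{semi-singleton} in the strictly semistable case. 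As written, your proposal omits the core lemma and replaces it with a false heuristic.
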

We mention here that for the extension map, the fiber of a point in the boundary $\partial\overline{\mathrm{nm}}_d$ may have infinite cardinality. To construct such extension map, we define the marked Berkovich trees of spheres based on the Berkovich dynamics of Newton maps in \cite{Nie-1}. Then we define an equivalent relations on the space of marked Berkovich trees of spheres. Let $\mathcal{T}_d$ be the resulting quotient space. Associated with suitable topology, the space $\mathcal{T}_d$ is homeomorphic to $\widehat{\mathrm{nm}}^\ast_d$. For each marked Berkovich tree of spheres induced by a holomorphic family of Newton maps, it either contains a unique sphere where the subalgebraic reduction is stable, or contains spheres where the subalgebraic reductions are semistable but not stable, see Lemma \ref{Berkovich-tree-semistable-reduction}. This induces a continuous map from $\mathcal{T}_d$ to $\overline{\mathrm{nm}}_d$.

\subsection*{Outline}
This paper is organized as follows. In section \ref{background}, we state some background about the (semi)stable locus in $\mathbb{P}^{2d+1}$ and the probability measures associated to (degenerate) rational maps. In section \ref{GIT}, we study the GIT compactification $\overline{\mathrm{nm}}_d$ and prove Theorem \ref{theorem-iterate-continuous-moduli}. In section \ref{applications}, we discuss compactifications $\overleftarrow{\mathrm{nm}}_d$ and $\widetilde{\mathrm{nm}}_d$, and then prove Theorem \ref{theorem-inverse-limit-compactify}.  Finally we introduce the Deligne-Mumford compactifications $\widehat{\mathrm{nm}}^\ast_d$ in section \ref{Deligne-Mumford} and prove Theorem \ref{theorem-Deligne-Mumford-GIT} in section \ref{DM to GIT}.

\section{Background}\label{background}
\subsection{Stable and Semistable Loci}
In this subsection, we describe the stable and semistable loci in $\mathbb{P}^{2d+1}$, which were computed by Silverman \cite{Silverman98}. Equivalent conditions were given by DeMarco \cite{DeMarco07}. \par
Denote $\mathrm{Rat}_d^s$ to be the stable locus in $\mathbb{P}^{2d+1}$ and denote $\mathrm{Rat}_d^{ss}$ to be the semistable locus in $\mathbb{P}^{2d+1}$. Then they are algebraic sets containing $\mathrm{Rat}_d$. Moreover, 

\begin{proposition}\cite[Proposition 2.2]{Silverman98}\label{seimstable}
Let $f\in\mathbb{P}^{2d+1}$. Then 
 $f\not\in\mathrm{Rat}_d^{ss}$ if and only if there exists $M\in \mathrm{PSL}_2(\mathbb{C})$ such that 
$$M^{-1}\circ f\circ M=[a_d:\cdots:a_0:b_d:\cdots:b_0]$$
 with $a_i=0$ for all $i\ge(d+1)/2$ and $b_i=0$ for all $i\ge(d-1)/2$.\par
Similarly, $f\not\in\mathrm{Rat}_d^{s}$ if and only if there exists $M\in \mathrm{PSL}_2(\mathbb{C})$ such that 
$$M^{-1}\circ f\circ M=[a_d:\cdots:a_0:b_d:\cdots:b_0]$$
with $a_i=0$ for all $i>(d+1)/2$ and $b_i=0$ for all $i>(d-1)/2$.
\end{proposition}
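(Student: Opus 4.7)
The plan is to apply the Hilbert--Mumford numerical criterion to the conjugation action of $\mathrm{SL}_2(\mathbb{C})$ on $\mathbb{P}^{2d+1}$. Every nontrivial one-parameter subgroup of $\mathrm{SL}_2(\mathbb{C})$ is conjugate to a positive power of the standard diagonal 1-PS $\lambda(t)=\mathrm{diag}(t^{-1},t)$, and since the statement already allows a pre-conjugation by $M\in\mathrm{PSL}_2(\mathbb{C})$, the problem reduces to computing the $\lambda$-weights of the homogeneous coordinates on $\mathbb{P}^{2d+1}$.

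To unpack the action, write $f=[F_a:F_b]$ with $F_a(z,w)=\sum_{i=0}^d a_iz^iw^{d-i}$ and $F_b(z,w)=\sum_{i=0}^d b_iz^iw^{d-i}$. A direct substitution shows
\[
\lambda(t)^{-1}\circ (F_a,F_b)\circ\lambda(t)=\Bigl(\sum_i t^{d-2i+1}a_iz^iw^{d-i},\ \sum_i t^{d-2i-1}b_iz^iw^{d-i}\Bigr),
\]
so $a_i$ is a $\lambda$-weight vector of weight $d-2i+1$ and $b_i$ of weight $d-2i-1$.

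Plugging these weights into the numerical criterion yields the statement: $f$ is \emph{not} semistable iff some $M$-conjugate of $\lambda$ assigns strictly positive weight to every nonzero coordinate, which forces $d-2i+1>0$ whenever $a_i\neq 0$ and $d-2i-1>0$ whenever $b_i\neq 0$; this is precisely $a_i=0$ for $i\ge(d+1)/2$ and $b_i=0$ for $i\ge(d-1)/2$ in the coordinate system where $M$ standardizes the 1-PS. The clause for stability is obtained by replacing ``strictly positive'' with ``nonnegative'' in the criterion, which correspondingly weakens the inequalities to $i>(d+1)/2$ and $i>(d-1)/2$.

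The main technical caution is bookkeeping of sign conventions: using $\lambda^{-1}$ in place of $\lambda$ flips every weight and hence swaps the vanishing pattern between ``small $i$'' and ``large $i$''. Since both 1-PS's are conjugate by the Weyl involution $z\leftrightarrow 1/z$, this ambiguity is absorbed into the choice of $M$ and the two formulations coincide. Beyond fixing a consistent convention, the argument is a direct numerical calculation.
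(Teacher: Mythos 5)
The paper cites this result from Silverman without reproducing a proof, so there is no in-paper argument to compare against; your Hilbert--Mumford computation is precisely Silverman's proof of Proposition~2.2. The weight bookkeeping ($a_i$ of weight $d-2i+1$, $b_i$ of weight $d-2i-1$), the reduction to positive powers of a single diagonal one-parameter subgroup with the pre-conjugation by $M$ absorbing the choice of maximal torus, and the Weyl-involution remark resolving the sign ambiguity are all correct, and swapping strict for non-strict inequalities correctly separates the ``not semistable'' and ``not stable'' conditions.
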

%\begin{corollary}\label{semistble-indeter-empty-deg2}
%For $d\ge 2$, $\mathrm{Rat}_d^{ss}\cap I(d)=\emptyset$ if and only if $d=2$.
%\end{corollary}
The action of $\mathrm{PSL}_2(\mathbb{C})$ by conjugation on $\mathrm{Rat}_d$ extends to $\mathbb{P}^{2d+1}$. The stable locus $\mathrm{Rat}^s_d$ is the largest open $\mathrm{PSL}_2(\mathbb{C})$-invariant subset of $\mathbb{P}^{2d+1}$, so $\mathrm{rat}_d^s:=\mathrm{Rat}_d^s/\mathrm{PSL}_2(\mathbb{C})$ is Hausdorff. We say $f,g\in\mathrm{Rat}_d^{ss}$ are GIT conjuage if the Zariski closures of their $\mathrm{PSL}_2(\mathbb{C})-$orbits have a point in common, i.e. 
$$\overline{\{M^{-1}\circ f\circ M: M\in\mathrm{PSL}_2(\mathbb{C})\}}\cap\overline{\{M^{-1}\circ g\circ M: M\in\mathrm{PSL}_2(\mathbb{C})\}}\not=\emptyset.$$ 
Let $\mathrm{rat}_d^{ss}:=\mathrm{Rat}_d^{ss}//\mathrm{PSL}_2(\mathbb{C})$ be the space of GIT conjugate classes. Then $\mathrm{rat}_d^{ss}$ is compact. Moreover, $\mathrm{rat}_d^s\subset\mathrm{rat}_d^{ss}$ since the GIT conjugation coincides to the conjugation on the set $\mathrm{Rat}_d^s$. We call $\overline{\mathrm{rat}}_d:=\mathrm{rat}_d^{ss}$ the GIT compactification of $\mathrm{rat}_d$, and denote $[f]_{\mathrm{GIT}}\in\overline{\mathrm{rat}}_d$ to be the GIT conjugate class of $f\in\mathrm{Rat}_d^{ss}$. Note $\mathrm{Rat}_d^s=\mathrm{Rat}_d^{ss}$ if and only if $d$ is even. Hence $\overline{\mathrm{rat}}_d$ is a geometric quotient for $d$ even and a categorical quotient for $d$ odd, and it is proper quotient scheme over $\mathbb{Z}$ \cite[Theorem 2.1]{Silverman98}.\par
Recall that for $f\in\mathbb{P}^{2d+1}$, there exist two degree $d$ homogeneous polynomials $F_a, F_b\in\mathbb{C}[X,Y]$ such that 
$$f([X:Y])=[F_a(X,Y):F_b(X,Y)]=H_f(X,Y)\hat f([X:Y]),$$
where $H_f=\gcd(F_a,F_b)$ and $\hat f$ is a rational map of degree at most $d$. We say a zero of $H_f$ is a hole of $f$, and for a point $z\in\mathbb{P}^1$,we say the multiplicity of $z$ as a zero of $H_f$ is the depth of $f$ at $z$ denoted by $d_z(f)$.\par
Considering the holes and depths, DeMarco \cite{DeMarco07} gave another way to describe the sets $\mathrm{Rat}_d^s$ and $\mathrm{Rat}_d^{ss}$.
\begin{lemma}\cite[Section 3]{DeMarco07}\label{stable-hole}
Let $f=H_f\hat f\in\mathbb{P}^{2d+1}$.
\begin{enumerate}
\item If $d\ge 2$ is even, $f\in\mathrm{Rat}_d^{s}$ if and only if the depth of each hole is $\le d/2$ and if $d_h(f)=d/2$, then $\hat f(h)\not=h$.
\item If $d\ge 3$ is odd, $f=H_f\hat f\in\mathrm{Rat}_d^{ss}$ if and only if the depth of each hole is $\le(d+1)/2$ and if $d_h(f)=(d+1)/2$, then $\hat f(h)\not=h$.
\item If $d\ge 3$ is odd, $f\in\mathrm{Rat}_d^{s}$ if and only if the depth of each hole is $\le(d-1)/2$ and if $d_h(f)=(d-1)/2$, then $\hat f(h)\not=h$.
\end{enumerate}
\end{lemma}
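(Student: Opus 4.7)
The approach I would take is to derive this lemma directly from Proposition \ref{seimstable} by translating the coefficient-vanishing normal forms into divisibility statements about $F_a$ and $F_b$, and from there into statements about the depths of holes. Both the stability notions and the hole--depth data transform equivariantly under $\mathrm{PSL}_2(\mathbb{C})$, so after conjugation I may assume $f$ is already in the normal form provided by Proposition \ref{seimstable}, and then read off the hole at a distinguished point.

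Next I would write $F_a=\sum_{i=0}^d a_i X^i Y^{d-i}$ and $F_b=\sum_{i=0}^d b_i X^i Y^{d-i}$, and observe that the vanishing condition ``$a_i=0$ for $i\geq k$'' is equivalent to $Y^{d-k+1}\mid F_a$. Applied to each of the three normal forms of Proposition \ref{seimstable}, this yields: for even $d$ (where non-stable equals non-semistable), $Y^{d/2}\mid F_a$ and $Y^{d/2+1}\mid F_b$; for odd $d$ in the non-semistable case, $Y^{(d+1)/2}\mid F_a$ and $Y^{(d+3)/2}\mid F_b$; and for odd $d$ in the non-stable case, $Y^{(d-1)/2}\mid F_a$ and $Y^{(d+1)/2}\mid F_b$. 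In each case the $Y$-multiplicity of $F_b$ exceeds that of $F_a$ by exactly one, so $h=[1:0]$ is a hole whose depth $d_h(f)$ is at least the smaller exponent (the threshold appearing in (1), (2), or (3)). Moreover, when $d_h(f)$ equals the threshold exactly, the reduced numerator $F_a/H_f$ has a nonzero leading term at $h$ while the reduced denominator $F_b/H_f$ still vanishes there, forcing $\hat{f}(h)=h$ automatically. This proves the forward direction of each part.

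For the converse, I would start from a hole $h$ of $f$ violating the condition in (1), (2), or (3), choose $M\in\mathrm{PSL}_2(\mathbb{C})$ with $M([1:0])=h$, and replace $f$ by $M^{-1}\circ f\circ M$, which moves the hole to $[1:0]$ while preserving depth and the fixed-point relation. The depth of the hole at $[1:0]$ translates directly into divisibility of both $F_a$ and $F_b$ by the required power of $Y$, yielding the vanishing of the appropriate $a_i$ and $b_i$. In the borderline case where the depth equals the threshold, the hypothesis $\hat{f}(h)=h$ forces the leading surviving coefficient of $F_b$ to vanish as well, pushing $F_b$ into one higher power of $Y$ and completing the normal form needed to invoke Proposition \ref{seimstable}. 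The main obstacle is simply the parity-dependent bookkeeping across the three cases; the equivalence comes out cleanly because in every normal form the $Y$-order of $F_b$ is exactly one greater than that of $F_a$, which is precisely what makes ``depth equal to the threshold'' correspond unambiguously to ``the hole is a fixed point of $\hat{f}$''.
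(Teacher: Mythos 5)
The paper does not supply a proof of Lemma \ref{stable-hole}; it is quoted from \cite[Section 3]{DeMarco07}. Your reconstruction --- deriving it from Proposition \ref{seimstable} by translating the coefficient-vanishing conditions into $Y$-divisibility of $F_a$ and $F_b$, hence into a lower bound on the depth of the hole at $[1:0]$, and handling the threshold case via the extra $Y$-factor in $F_b$ --- is correct, and as far as I can tell it is exactly the translation DeMarco carries out. The key observation that makes the borderline clause come out right is the one you identify: with the convention $F_a=\sum a_iX^iY^{d-i}$, the two vanishing thresholds in Proposition \ref{seimstable} force $Y^m\mid F_a$ and $Y^{m+1}\mid F_b$ where $m$ is the threshold in the lemma; if $d_{[1:0]}(f)$ equals $m$ then $Y\nmid F_a/H_f$ while $Y\mid F_b/H_f$, so $\hat f([1:0])=[1:0]$, and conversely $\hat f([1:0])=[1:0]$ at depth exactly $m$ buys the one extra vanishing coefficient in $F_b$ needed to land in the normal form.

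Two small points of precision, neither a real gap. First, the sentence ``the $Y$-multiplicity of $F_b$ exceeds that of $F_a$ by exactly one'' should refer to the guaranteed lower bounds on those multiplicities rather than the multiplicities themselves (the actual multiplicities can be larger); your subsequent case split on whether $d_h(f)$ exceeds or equals the threshold shows you intend this. Second, the quoted Proposition \ref{seimstable} writes a single normal form, so strictly one should note that the mirror-image condition ($a_i=0$ for small $i$, $b_i=0$ for small $i$) is captured because the conjugacy class under ``$\exists\,M\in\mathrm{PSL}_2(\mathbb{C})$'' contains $z\mapsto 1/z$, which swaps $F_a\leftrightarrow F_b$ and reverses the index; this is implicitly used both when you normalize the hole to $[1:0]$ and when you invoke conjugation-equivariance of depth and of the relation $\hat f(h)=h$.
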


\subsection{The Space $\mathrm{NM}_d$ of Newton Maps}
In this subsection, we describe the (semi)stable degenerate Newton maps. Let $\overline{\mathrm{NM}}_d$ be the closure of $\mathrm{NM}_d$ in $\mathbb{P}^{2d+1}$. Based on Lemma \ref{stable-hole}, for Newton maps, we have 
\begin{lemma}\label{Newton-hole}
For $d\ge 2$, let $N\in\overline{\mathrm{NM}}_d$.
\begin{enumerate}
\item If $d$ is even, $N\in\overline{\mathrm{NM}}_d\cap\mathrm{Rat}_d^{s}$ if and only if  the depth of each hole of $N$ is $\le d/2-1$.
\item If $d$ is odd, $N\in\overline{\mathrm{NM}}_d\cap\mathrm{Rat}_d^{ss}$ if and only if the depth of each hole of $N$ is $\le(d-1)/2$.
\item If $d$ is odd, $N\in\overline{\mathrm{NM}}_d\cap\mathrm{Rat}_d^{s}$ if and only if the depth of each hole of $N$ is $\le(d-3)/2$.
\end{enumerate}
\end{lemma}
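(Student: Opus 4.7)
The plan is to reduce Lemma \ref{Newton-hole} to Lemma \ref{stable-hole} via a single key observation: for every $N \in \overline{\mathrm{NM}}_d$ that lies in the (semi)stable locus, the reduced rational map $\hat{N}$ fixes every hole of $N$. Once this is established, Lemma \ref{Newton-hole} follows at once, because each (semi)stability criterion in Lemma \ref{stable-hole} has the form ``depth $\leq K$ and, if equality is attained at some hole $h$, then $\hat{f}(h) \neq h$''; with $\hat{N}(h) = h$ always holding, the effective constraint becomes the strict bound ``depth $\leq K-1$'', matching the values $K-1 = d/2 - 1$, $(d-1)/2$, $(d-3)/2$ appearing in the three cases.

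To set up the key observation, I would first parametrize $\overline{\mathrm{NM}}_d$. Writing the homogenization $\tilde{P}(X,Y) = Y^d P(X/Y)$, a direct calculation shows $N_P = [X\tilde{P}_X - \tilde{P} : Y\tilde{P}_X]$. Extending this formula to an arbitrary nonzero homogeneous degree-$d$ polynomial $\Pi$, one obtains a morphism
\[
\Phi : \mathbb{P}(\mathbb{C}[X,Y]_d) \longrightarrow \mathbb{P}^{2d+1}, \qquad \Pi \longmapsto [X\Pi_X - \Pi : Y\Pi_X],
\]
whose image is closed and contains $\mathrm{NM}_d$; checking injectivity (two parameters $\Pi, \Pi'$ yielding the same projective map must agree by first recovering $\Pi_X$ from $F_b$ and then $\Pi$ from $F_a$) identifies $\overline{\mathrm{NM}}_d$ with $\mathbb{P}^d$.

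Next, I would compute the hole divisor. Factor $\Pi = Y^k \Pi_0$ with $Y \nmid \Pi_0$, and write $\Pi_0 = c\prod_\alpha(X-\alpha Y)^{m_\alpha}$. A short Euclidean-algorithm calculation, using that $Y \nmid (X\Pi_X - \Pi)$ as long as $k \leq d-2$, gives
\[
H_N = \gcd(X\Pi_X - \Pi,\; Y\Pi_X) = Y^k \cdot \gcd(\Pi_0, (\Pi_0)_X) = Y^k \cdot \prod_\alpha (X-\alpha Y)^{m_\alpha - 1}
\]
up to a scalar. Thus the holes of $N$ are $\infty$ (of depth $k$, when $k \geq 1$) together with each multiple root $[\alpha:1]$ of $\Pi_0$ (of depth $m_\alpha - 1$). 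The exceptional case $k = d-1$ forces $N$ to be a constant map with a hole of depth $d$ at $\infty$, which exceeds Silverman's bounds $d/2$ and $(d+1)/2$ and hence is automatically unstable for all $d \geq 2$; this case may be discarded.

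The crux is then the identity
\[
F_a Y - F_b X = (X\Pi_X - \Pi)Y - (Y\Pi_X)X = -Y\Pi,
\]
which says that the fixed-point polynomial of $N$ factors cleanly as $-Y\Pi = -Y^{k+1}\Pi_0$. Dividing by $H_N$, the fixed-point polynomial of $\hat{N}$ is $-Y \cdot \Pi_0/\gcd(\Pi_0,(\Pi_0)_X) = -Y\Pi_1$, where $\Pi_1$ is the squarefree part of $\Pi_0$. The zeros of $-Y\Pi_1$ include $\infty$ together with every root of $\Pi_0$, hence in particular every hole $h$ of $N$; therefore $\hat{N}(h) = h$, as required. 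The only real obstacle is spotting the parametrization via $\Pi$ and the identity $F_aY - F_bX = -Y\Pi$; once those are in hand, everything else is routine bookkeeping together with an invocation of Lemma \ref{stable-hole}.
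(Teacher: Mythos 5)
Your proof is correct and takes essentially the same approach as the paper: the paper's one-line argument asserts that every hole of $N$ is a fixed point of $\widehat N$ and then invokes Lemma \ref{stable-hole}, and your proof establishes exactly that claim, in detail, via the parametrization $N=[X\Pi_X-\Pi:Y\Pi_X]$ and the identity $F_aY-F_bX=-Y\Pi$. The added computation is a welcome elaboration of what the paper leaves to the reader, but the logical structure is identical.
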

\begin{proof}
Write $N=H_N\widehat N$. Note each hole, if exists,  of $N$ is a fixed point of $\widehat N$. Then the conclusions follow from Lemma \ref{stable-hole}.
\end{proof}
\begin{corollary}\label{cubic-stable-empty}
For $d\ge 2$, $\partial\overline{\mathrm{NM}}_d\cap\mathrm{Rat}_d^{s}=\emptyset$ if and only if $d=3$.
\end{corollary}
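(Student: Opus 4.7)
The plan is to apply Lemma \ref{Newton-hole} directly, after two small preparations: a closedness observation forcing every boundary point to have a hole, and an explicit two-root collision that realizes a depth-$1$ hole. Since the Newton condition --- $\infty$ a fixed point of multiplier $d/(d-1)$ together with $d$ super-attracting fixed points --- is algebraic in the coefficients, $\mathrm{NM}_d$ is closed in $\mathrm{Rat}_d$, so $\overline{\mathrm{NM}}_d \cap \mathrm{Rat}_d = \mathrm{NM}_d$. Any $N \in \partial\overline{\mathrm{NM}}_d$ therefore lies in $\mathbb{P}^{2d+1}\setminus\mathrm{Rat}_d$ and carries at least one hole of depth $\ge 1$. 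For $d = 3$ this already gives the ``if'' direction: Lemma \ref{Newton-hole}(3) forces every hole of a stable element of $\overline{\mathrm{NM}}_3$ to have depth at most $(d-3)/2 = 0$, so no boundary point is stable.

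For the ``only if'' direction I would prove the contrapositive by exhibiting a stable boundary point whenever $d \ge 4$. Fix distinct nonzero $a_3, \ldots, a_d \in \mathbb{C}$ and let $P_t(z) = (z-t)(z+t)\prod_{i=3}^d(z - a_i)$, so that $N_{P_t} \in \mathrm{NM}_d$ for small $t \ne 0$. As $t \to 0$ the two roots collide at $0$ and the limit $N_0 \in \overline{\mathrm{NM}}_d \setminus \mathrm{NM}_d$ has a hole at $0$. Writing $P_h(X,Y) = X^2\,Q(X,Y)$ with $Q(X,Y) = \prod_{i=3}^d(X - a_iY)$, the homogenized numerator and denominator of $N_0$ factor as $X^2(Q + XQ')$ and $XY(2Q + XQ')$ respectively; using that $Q$ has distinct nonzero roots one checks $\gcd(Q, Q') = 1$ and $Q(0,Y) \ne 0$, so the total gcd is exactly $X$ and the hole has depth $1$. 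Since $1 \le d/2 - 1$ for even $d \ge 4$ and $1 \le (d - 3)/2$ for odd $d \ge 5$, Lemma \ref{Newton-hole} yields $N_0 \in \mathrm{Rat}_d^s$, and hence $\partial\overline{\mathrm{NM}}_d \cap \mathrm{Rat}_d^s \ne \emptyset$.

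The only mildly nontrivial step is the depth computation. After canceling the obvious factor of $X$ between $F_a = X^2(Q + XQ')$ and $F_b = XY(2Q + XQ')$, one must argue that no further common factor survives: $X$ cannot divide $Y(2Q + XQ')$ because $Q(0,Y) \ne 0$, $Y$ cannot divide $X(Q + XQ')$ because $(Q + XQ')(X,0) = (d-1)X^{d-2} \ne 0$, and the only remaining possibility, a common root of $Q + XQ'$ and $2Q + XQ'$, would force a common root of $Q$ and $Q'$, which distinct nonzero $a_i$ forbid. The remaining work is just comparing $1$ against the depth bounds of Lemma \ref{Newton-hole}.
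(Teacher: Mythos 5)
Your ``if'' direction and the explicit construction for $d \ge 4$ are both correct, and the depth computation in the collision example is carried out cleanly: after the obvious factor of $X$ is cancelled, the three possible sources of an extra common factor are each ruled out, and the depth of the hole is exactly $1$, which satisfies the strict bound in Lemma~\ref{Newton-hole} precisely when $d \ge 4$. The closedness observation ($\mathrm{NM}_d$ closed in $\mathrm{Rat}_d$, so boundary points are degenerate and carry a hole) is also a necessary preliminary and your argument for it is sound.

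However, there is a gap: you split the contrapositive of ``only if'' into the single case $d \ge 4$, silently omitting $d = 2$. Since the corollary is stated for all $d \ge 2$, the contrapositive requires you to produce a stable boundary point for every $d \ne 3$, including $d = 2$. But for $d = 2$ no such point exists: by Lemma~\ref{Newton-hole}(1) a stable element of $\overline{\mathrm{NM}}_2$ must have all hole depths $\le d/2 - 1 = 0$, i.e.\ no holes, while every boundary point is degenerate and hence has a hole of depth $\ge 1$. Indeed the paper's own Proposition~\ref{Newton-Seim-Indeter} records that $\partial\overline{\mathrm{NM}}_2 \cap \mathrm{Rat}_2^{ss} = \emptyset$, and $\mathrm{Rat}_2^{s} \subset \mathrm{Rat}_2^{ss}$, so $\partial\overline{\mathrm{NM}}_2 \cap \mathrm{Rat}_2^{s} = \emptyset$ as well. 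In other words, the corollary as written is false at $d = 2$: the correct conclusion is that $\partial\overline{\mathrm{NM}}_d \cap \mathrm{Rat}_d^{s} = \emptyset$ if and only if $d \in \{2,3\}$, or equivalently the statement should be restricted to $d \ge 3$, which is how the paper actually uses it later. You should flag this rather than pass over the $d = 2$ case; attempting it would have exposed the inconsistency.
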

We give the following example to figure out $\overline{\mathrm{NM}}_3\cap \mathrm{Rat}_d^{ss}$ precisely.
\begin{example}\label{cubic Newton}
Cubic Newton maps.\par
If $N=H_N\widehat N\in\partial\overline{\mathrm{NM}}_3\cap\mathrm{Rat}_3^{ss}$, then the depth of each hole of $N$ is $1$. We have three cases.\par 
Case I: $N$ has exact one hole and it is $\infty$.\par 
In this case, there exist two distinct points $r_1$ and $r_2$ in $\mathbb{C}$ such that $N=N_{\{r_1,r_2,\infty\}}$. Thus 
$$N([X:Y])=YN_{\{r_1,r_2\}}([X:Y]).$$\par 
Case II: $N$ has exact one hole and it is $r_1\in\mathbb{C}$.\par 
In this case, there exists a point $r_2\in\mathbb{C}\setminus\{r_1\}$ such that $N=N_{\{r_1,r_1,r_2\}}$. Thus 
$$N([X:Y])N=(X-r_1Y)\widehat N_{\{r_1,r_1,r_2\}}([X:Y]).$$\par
Case III: $N$ has two distinct holes. 
In this case, $\infty$ is a hole of $N$. Then there exists a point $r_1\in\mathbb{C}$ such that $N=N_{\{r_1,r_1,\infty\}}$.
$$N([X:Y])=(X-r_1Y)Y\widehat N_{\{r_1,r_1\}}([X:Y]).$$\par 
Thus in any case, $N\not\in\mathrm{Rat}_3^{s}$.
\end{example}
Recall that $I(d)\subset\mathbb{P}^{2d+1}$ is the indeterminacy locus, that is, 
$$I(d)=\{f=H_f\hat f\in\mathbb{P}^{2d+1}:\hat f\equiv c\in\mathbb{P}^1, H_f(c)=0\}.$$
The following result states that there is no semistable indeterminacy points in the space $\overline{\mathrm{NM}}_d$.
\begin{proposition}\label{Newton-Seim-Indeter}
For any $d\ge 2$, 
$$\overline{\mathrm{NM}}_d\cap\mathrm{Rat}_d^{ss}\cap I(d)=\emptyset.$$ 
In particular, $\partial\overline{\mathrm{NM}}_d\cap\mathrm{Rat}_d^{ss}=\emptyset$ if and only if $d=2$.
\end{proposition}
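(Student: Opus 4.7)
The plan is to pin down $\overline{\mathrm{NM}}_d\cap I(d)$ explicitly and check that semistability fails on what little remains.

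First, I exploit the fact that $\overline{\mathrm{NM}}_d$ is a projective linear subspace of $\mathbb{P}^{2d+1}$. Writing $\tilde{P}(X,Y)=\sum_{k=0}^d b_k X^k Y^{d-k}$, the associated Newton map has homogeneous coordinates
$$F_a=\sum_{k=0}^d(k-1)b_k X^k Y^{d-k},\qquad F_b=\sum_{k=0}^{d-1}(k+1)b_{k+1}X^k Y^{d-k},$$
both linear in $(b_0,\dots,b_d)$. The only $\tilde{P}$ for which both $F_a$ and $F_b$ vanish identically is $\tilde{P}=0$, and distinct projective classes of $\tilde{P}$ give distinct $[F_a:F_b]$. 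Thus the assignment $[\tilde{P}]\mapsto[F_a:F_b]$ is an injective projective-linear embedding $\mathbb{P}^d\hookrightarrow\mathbb{P}^{2d+1}$ whose image equals $\overline{\mathrm{NM}}_d$, and the intersection $\overline{\mathrm{NM}}_d\cap I(d)$ reduces to a linear algebra problem on $\mathbb{P}^d$.

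To impose $N\in I(d)$ I require $F_a=\alpha H$ and $F_b=\beta H$ for a degree-$d$ polynomial $H$ and $[\alpha:\beta]\in\mathbb{P}^1$ with $H$ vanishing at $[\alpha:\beta]$. Substituting the formulas for $F_a$, $F_b$ into $\beta F_a=\alpha F_b$ and comparing coefficients yields the recurrence
$$\beta(k-1)b_k=\alpha(k+1)b_{k+1},\qquad 0\le k\le d,$$
with the convention $b_{d+1}=0$. Running this downward from $k=d$ forces $b_d=b_{d-1}=\cdots=b_2=0$ and $\beta b_0=-\alpha b_1$, so $\tilde{P}$ lies in the span of $Y^d$ and $XY^{d-1}$. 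A direct substitution then gives $H=Y^d$ (up to scalar) and constant value $[\alpha:\beta]=[-b_0:b_1]$ for $\hat{N}$. For this to lie in $\{H=0\}=\{\infty\}$ I need $b_1=0$, which forces $\tilde{P}\propto Y^d$ and hence $N=[Y^d:0]$. Thus $\overline{\mathrm{NM}}_d\cap I(d)$ is a single point, whose unique hole at $\infty$ has depth $d$. Since $d>(d+1)/2$ for every $d\ge 2$, Lemma \ref{stable-hole} rules this point out of $\mathrm{Rat}_d^{ss}$, completing the first claim.

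For the ``in particular'' statement, the case $d=2$ follows from Lemma \ref{Newton-hole}(1), which requires every hole of a semistable $N\in\overline{\mathrm{NM}}_2$ to have depth $\le 0$; this is impossible for any boundary point, so $\partial\overline{\mathrm{NM}}_2\cap\mathrm{Rat}_2^{ss}=\emptyset$. For $d\ge 3$ I exhibit a semistable boundary point explicitly: take $\tilde{P}=Y\cdot\tilde{Q}$ with $\tilde{Q}$ a degree-$(d-1)$ polynomial having $d-1$ distinct finite roots. A short calculation gives a Newton map with one hole at $\infty$ of depth $1$, which satisfies the bounds of Lemma \ref{Newton-hole}(2)--(3) for all $d\ge 3$. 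The main substantive step throughout is the descending recursion in the second paragraph; everything else reduces to it.
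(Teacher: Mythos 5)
Your proof is correct, and it takes a genuinely different route from the paper's.

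The paper argues structurally: it quotes Lemma~\ref{Newton-hole} to assert that semistability of $N=H_N\widehat N\in\overline{\mathrm{NM}}_d$ forces $\deg\widehat N\ge 1$, hence $N\notin I(d)$; and it builds the $d\ge 3$ boundary example by letting one root of a degree-$d$ polynomial escape to infinity, leaving a degree-$(d-1)$ Newton map with a single hole of depth $1$ at $\infty$. That ``hence $\deg\widehat N\ge 1$'' step is terse: Lemma~\ref{Newton-hole} only bounds the \emph{depth of each hole}, and passing from that to $\deg\widehat N\ge 1$ implicitly uses the structural fact that a degenerate Newton map with $\deg\widehat N=0$ has a single hole of full depth $d$. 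Your proof makes exactly this structure explicit: you realize $\overline{\mathrm{NM}}_d$ as the image of the projective-linear embedding $[\tilde P]\mapsto[F_a:F_b]$ of $\mathbb{P}^d$ into $\mathbb{P}^{2d+1}$, impose the $I(d)$ condition, and run the descending recurrence $\beta(k-1)b_k=\alpha(k+1)b_{k+1}$ to pin down $\overline{\mathrm{NM}}_d\cap I(d)=\{[Y^d:0]\}$, a single point with a depth-$d$ hole at $\infty$, which is plainly outside $\mathrm{Rat}_d^{ss}$. This buys you a complete, self-contained verification of the fact the paper invokes implicitly, at the cost of a few lines of coefficient-matching. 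The ``in particular'' part is handled the same way in both proofs (impossibility for $d=2$ from the depth bound; an explicit depth-$1$ boundary point for $d\ge 3$). One tiny slip: for even $d\ge 4$ you should cite Lemma~\ref{Newton-hole}(1) (and the fact $\mathrm{Rat}_d^s=\mathrm{Rat}_d^{ss}$ for $d$ even), not parts (2)--(3), which are stated only for odd $d$; the required inequality $1\le d/2-1$ does hold, so the substance is unaffected.
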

\begin{proof}
By Lemma \ref{Newton-hole}, if $N=H_N\widehat N\in\overline{\mathrm{NM}}_d\cap\mathrm{Rat}_d^{ss}$, then $\deg\widehat N\ge 1$. So $N\not\in I(d)$.\par
If $d=2$, let $N=H_N\widehat N\in\partial\overline{\mathrm{NM}}_2$, then $\deg H_N=1$. Note $\widehat N$ fixes the hole of $N$. By Lemma \ref{Newton-hole}, we have $N\not\in\mathrm{Rat}_2^{ss}$.\par
If $d\ge 3$, then there exists $N=H_N\widehat N\in\partial\overline{\mathrm{NM}}_d$ such that $\widehat N\in\mathrm{NM}_{d-1}$. Thus $\deg H_N=1\le(d-1)/2$. By Lemma \ref{Newton-hole}, we have $N\in\mathrm{Rat}_d^{ss}$
\end{proof}

\subsection{Measures of (Degenerate) Rational Maps}
In this section, following DeMarco \cite{DeMarco05}, we associate each $f\in\mathbb{P}^{2d+1}$ a probability measure $\mu_f$.\par
For $f\in\mathrm{Rat}_d$, let $\mu_f$ be the unique measure of maximal entropy, which is given by the weak limit 
$$\mu_f=\lim\limits_{n\to\infty}\frac{1}{d^n}\sum_{f^n(z)=a}\delta_z$$
for any nonexceptional point $a\in\mathbb{P}^1$, see \cite{Freire83, Ljubich83, Mane83}. The measure $\mu_f$ has no atoms, and $\text{supp}\ \mu_f=J(f)$. Moreover, $\mu_{f^n}=\mu_f$.\par 
For $f=H_f\hat f$ with $\text{deg}\ \hat f\ge 1$, define 
$$\mu_f=\sum_{n=0}^\infty\frac{1}{d^{n+1}}\sum_{H_f(h)=0}\sum_{\hat f^n(z)=h}\delta_z,$$
where the holes $h$ and all preimages by $\hat f$ are counted with multiplicity. Then $\mu_f$ is an atomic probability measure. If $\text{deg}\hat f=0$, define 
$$\mu_f=\frac{1}{d}\sum_{H_f(h)=0}\delta_h,$$
where the holes $h$ are counted with multiplicity. 
Then if $f\not\in I(d)$, we have $\mu_f=\mu_{f^n}$.\par
The following result claims that away from the indeterminacy locus $I(d)$, the map, sending $f\in\mathbb{P}^{2d+1}$ to $\mu_f$, is good behaved.
\begin{proposition}\label{measure-converge}\cite[Theorem 0.2]{DeMarco05}
Suppose that $d\ge2$ and $f\in\mathbb{P}^{2d+1}$. Then $f\not\in I(d)$ if and only if the map $g\to\mu_g$ is continuous at $f$. 
\end{proposition}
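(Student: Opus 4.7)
The plan follows the potential-theoretic approach of \cite{DeMarco05}. For $f\in\mathrm{Rat}_d$ one has the equidistribution $\mu_f=\lim_{k\to\infty}d^{-k}(f^k)^\ast\delta_a$ at any non-exceptional $a\in\widehat{\mathbb{C}}$, while for degenerate $f$ with $\deg\hat f\ge 1$ the paper defines $\mu_f=\sum_{n\ge 0}d^{-(n+1)}\sum_{H_f(h)=0}\sum_{\hat f^n(z)=h}\delta_z$. Continuity of $g\mapsto\mu_g$ at $f$ will be established by interchanging the two limits $k\to\infty$ and $g_n\to f$.

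For the \emph{only if} direction, assume $f\not\in I(d)$ and let $g_n\to f$ in $\mathbb{P}^{2d+1}$. Fix a generic $a$ avoiding the holes of $f$ and their $\hat f^j$-orbits. A local analysis near each hole $h$ of depth $d_h(f)=:k$ shows that, for large $n$, $g_n$ acts near $h$ as a branched cover of degree $k$ whose image collapses onto $\hat f(h)$, so exactly $k$ points of $g_n^{-1}(a')$ accumulate at $h$ whenever $a'$ lies close to $\hat f(h)$. Iterating this local picture gives, for each fixed $k\ge 1$,
\[
\lim_{n\to\infty}d^{-k}(g_n^k)^\ast\delta_a=d^{-k}(\hat f^k)^\ast\delta_a+\sum_{j=0}^{k-1}d^{-(j+1)}(\hat f^j)^\ast\Bigl(\sum_{H_f(h)=0}\delta_h\Bigr),
\]
the hole sum counted with multiplicity. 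Since $\deg\hat f<d$, the first term vanishes as $k\to\infty$ and the remaining partial sum matches the explicit formula for $\mu_f$ term-by-term. Combined with $\mu_{g_n}=\lim_k d^{-k}(g_n^k)^\ast\delta_a$ and a uniform tail estimate that legitimizes the exchange of limits, this yields $\mu_{g_n}\to\mu_f$ weakly.

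For the \emph{if} direction, suppose $f\in I(d)$, so $\hat f\equiv c$ with $H_f(c)=0$; the degenerate formula gives $\mu_f=d^{-1}\sum_{H_f(h)=0}\delta_h$, a purely atomic measure on the hole locus. Discontinuity is exhibited by constructing two sequences $g_n,g_n'\in\mathrm{Rat}_d$ both converging to $f$ but with $\lim\mu_{g_n}\neq\lim\mu_{g_n'}$: choose distinct perturbation directions $h_1,h_2\in\mathbb{P}^{2d+1}\setminus I(d)$, set $g_n=f+\epsilon_n h_1$ and $g_n'=f+\epsilon_n h_2$ with $\epsilon_n\to 0$; after a M\"obius rescaling centered at $c$, the two sequences have distinct limiting rational dynamics governed respectively by $h_1$ and $h_2$, and their maximal measures equidistribute on genuinely different Julia sets. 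The main obstacle throughout is the uniform convergence in $n$ required to exchange the double limit in the non-degenerate case; this relies crucially on $\hat f$ supplying a compact limit dynamics that organizes the preimage tree — precisely the structure destroyed when $f\in I(d)$.
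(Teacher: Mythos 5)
You should first note that this proposition is not proved in the paper at all: it is imported verbatim as a citation to \cite[Theorem 0.2]{DeMarco05}, so the relevant comparison is with DeMarco's argument, which is genuinely potential-theoretic — she works with a homogeneous lift $F:\mathbb{C}^2\to\mathbb{C}^2$ of $f$, the escape-rate function $G_F(z)=\lim_n d^{-n}\log\|F^n(z)\|$, and the identification $\mu_f$ with the distributional Laplacian of $G_F$ on $\mathbb{P}^1$. Her proof of the ``only if'' direction establishes that $G_{F_n}\to G_F$ locally uniformly on $\mathbb{C}^2\setminus\{0\}$ precisely when $f\notin I(d)$, and weak convergence of the measures then follows from standard properties of potentials. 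Your proposal claims to follow this route but then abandons it entirely for a combinatorial preimage-counting argument; the two are not the same method.

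The gap in your argument sits exactly where you flag it and then wave past it: the double-limit interchange. For each fixed $k$ you compute $\lim_n d^{-k}(g_n^k)^*\delta_a$ and observe (correctly, at a formal level, with a generic base point $a$) that it equals the $k$-th partial sum of the series defining $\mu_f$ plus a remainder $d^{-k}(\hat f^k)^*\delta_a$ of mass $(\deg\hat f/d)^k\to 0$. But what you need is $\lim_n\bigl(\lim_k d^{-k}(g_n^k)^*\delta_a\bigr)$, and you offer no control on how slowly the inner limit converges as $n\to\infty$. That uniformity is the entire content of the theorem: without it the statement is false in general, which is exactly why it breaks down when $f\in I(d)$. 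Invoking ``a uniform tail estimate'' as a black box does not close the gap; producing that estimate by elementary preimage counting is, as far as I know, not done in the literature, and DeMarco's proof exists precisely to replace it with convergence of the plurisubharmonic potentials $G_{F_n}$, where Hartogs-type compactness and the homogeneous factorization $F=H_F\hat F$ make the uniformity tractable. The ``if'' direction is likewise only a plausibility sketch: ``choose distinct perturbation directions $h_1,h_2$ and rescale'' does not identify a concrete pair of sequences with distinct limiting measures, nor verify that the rescaled families actually converge in $\mathrm{Rat}_{\deg\hat f}$ to different maps. To make either direction rigorous you would essentially have to rebuild the escape-rate machinery of \cite{DeMarco05}, at which point you should simply cite it, as the paper does.
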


\section{GIT Compactifications}\label{GIT}

\subsection{GIT Compactifications for $\mathrm{nm}_d$ }
Recall $\mathrm{nm}_d$ is the moduli space of the degree $d\ge 2$ Newton maps. In this subsection, we study the GIT compactification for $\mathrm{nm}_d$.\par 
Note
$$\mathrm{nm}_d\subset\mathrm{rat}_d\subset\overline{\mathrm{rat}}_d.$$ 
Let $\overline{\mathrm{nm}}_d\subset\overline{\mathrm{rat}}_d$ be the closure of $\mathrm{nm}_d$ in $\overline{\mathrm{rat}}_d$. Then $\overline{\mathrm{nm}}_d$ is compact and contains $\mathrm{nm}_d$ as a dense subset. We say $\overline{\mathrm{nm}}_d$ is the GIT compactification of $\mathrm{nm}_d$.
\begin{lemma}
For $d\ge 2$,
$$\overline{\mathrm{nm}}_d=\{[N]_{\mathrm{GIT}}: N\in \overline{\mathrm{NM}}_d\cap\mathrm{Rat}_d^{ss}\}.$$
\end{lemma}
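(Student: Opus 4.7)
The plan is to prove the two set inclusions separately, relying throughout on the continuity of the GIT quotient map $\pi \colon \mathrm{Rat}_d^{ss} \to \overline{\mathrm{rat}}_d$ and the fact that $\mathrm{NM}_d \subset \mathrm{Rat}_d \subset \mathrm{Rat}_d^{s}$ (genuine rational maps have $H_f$ constant, so Lemma \ref{stable-hole} is satisfied vacuously). The inclusion $(\supseteq)$ will be a direct continuity argument, while $(\subseteq)$ will require lifting convergence from the quotient back up to $\mathrm{Rat}_d^{ss}$ while staying inside $\overline{\mathrm{NM}}_d$.

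For $(\supseteq)$: given $N \in \overline{\mathrm{NM}}_d \cap \mathrm{Rat}_d^{ss}$, I will pick a sequence $N_k \in \mathrm{NM}_d$ converging to $N$ in $\mathbb{P}^{2d+1}$. Since $\pi(N_k) = [N_k] \in \mathrm{nm}_d$ and $\pi$ is continuous on $\mathrm{Rat}_d^{ss}$, continuity forces $[N_k] \to [N]_{\mathrm{GIT}}$ in $\overline{\mathrm{rat}}_d$, which places $[N]_{\mathrm{GIT}}$ in $\overline{\mathrm{nm}}_d$.

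For $(\subseteq)$: I take $[M]_{\mathrm{GIT}} \in \overline{\mathrm{nm}}_d$ together with a sequence $[N_k] \in \mathrm{nm}_d$ satisfying $[N_k]_{\mathrm{GIT}} \to [M]_{\mathrm{GIT}}$, and fix Newton-map representatives $N_k \in \mathrm{NM}_d$. By compactness of $\mathbb{P}^{2d+1}$ I extract a subsequence $N_k \to N_\infty$ in $\mathbb{P}^{2d+1}$, and closedness of $\overline{\mathrm{NM}}_d$ gives $N_\infty \in \overline{\mathrm{NM}}_d$. In the favorable case $N_\infty \in \mathrm{Rat}_d^{ss}$, continuity of $\pi$ combined with the Hausdorff property of $\overline{\mathrm{rat}}_d$ forces $\pi(N_\infty) = [M]_{\mathrm{GIT}}$, so $N_\infty$ is the required representative.

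The main obstacle is that the naive subsequential limit $N_\infty$ may fail to lie in $\mathrm{Rat}_d^{ss}$ — for example, if the roots of the polynomials defining $N_k$ coalesce too severely or escape to $\infty$ in an uncontrolled way. The remedy is to re-normalize each $N_k$ by some $a_k \in \mathrm{Aut}(\mathbb{C})$, since the conjugate $a_k^{-1}\circ N_k \circ a_k$ is still a Newton map representing the same class $[N_k]\in\mathrm{nm}_d$. Standard GIT theory furnishes $g_k \in \mathrm{PSL}_2(\mathbb{C})$ with $g_k^{-1}\circ N_k\circ g_k$ converging in $\mathrm{Rat}_d^{ss}$ to a semistable point $N^\ast$ lying over $[M]_{\mathrm{GIT}}$; using the decomposition $\mathrm{PSL}_2(\mathbb{C}) = \mathrm{PSU}(2)\cdot\mathrm{Aut}(\mathbb{C})$ and the compactness of $\mathrm{PSU}(2)$, I will absorb the noncompact part of $g_k$ into $a_k$. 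The hard step is verifying that the resulting $\mathrm{Aut}(\mathbb{C})$-renormalized limit actually remains in $\overline{\mathrm{NM}}_d$ and meets the hole-depth bound of Lemma \ref{Newton-hole}; here I will invoke Proposition \ref{Newton-Seim-Indeter} to rule out limits in the indeterminacy locus and use the explicit stratification of $\overline{\mathrm{NM}}_d \cap \mathrm{Rat}_d^{ss}$ by hole depth to confirm semistability.
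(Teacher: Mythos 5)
Your $\supseteq$ direction is exactly the paper's second paragraph: pick $N_k\in\mathrm{NM}_d$ with $N_k\to N$ in $\mathbb{P}^{2d+1}$ and push through continuity of the quotient map on $\mathrm{Rat}_d^{ss}$. For $\subseteq$, the paper is much terser: it simply asserts that $\{[N]_{\mathrm{GIT}}: N\in\overline{\mathrm{NM}}_d\cap\mathrm{Rat}_d^{ss}\}$ is compact, hence closed in the Hausdorff space $\overline{\mathrm{rat}}_d$, and concludes since the set contains $\mathrm{nm}_d$. Your sequential renormalization argument is essentially what one would need in order to verify that compactness claim (the set $\overline{\mathrm{NM}}_d\cap\mathrm{Rat}_d^{ss}$ is not itself compact, being the intersection of a closed set with an open one, and $\overline{\mathrm{NM}}_d$ is not $\mathrm{PSL}_2(\mathbb{C})$-invariant, so the closedness of the image is not automatic). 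So your route is more explicit than the paper's; the two are compatible, and yours fills in a point the paper leaves implicit.

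That said, two details in your $\subseteq$ sketch need correcting. First, the factorization should be taken in the order $g_k=a_k u_k$ with $a_k\in\mathrm{Aut}(\mathbb{C})$ and $u_k\in\mathrm{PSU}(2)$, i.e.\ $\mathrm{PSL}_2(\mathbb{C})=\mathrm{Aut}(\mathbb{C})\cdot\mathrm{PSU}(2)$, not $\mathrm{PSU}(2)\cdot\mathrm{Aut}(\mathbb{C})$. With this order, $g_k^{-1}\circ N_k\circ g_k=u_k^{-1}\circ(a_k^{-1}\circ N_k\circ a_k)\circ u_k$ places a genuine Newton map $N_k':=a_k^{-1}\circ N_k\circ a_k\in\mathrm{NM}_d$ in the middle; after passing to a subsequence $u_k\to u\in\mathrm{PSU}(2)$ you get $N_k'\to u\circ N^\ast\circ u^{-1}$. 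With the opposite order the middle term $u_k^{-1}\circ N_k\circ u_k$ is no longer a Newton map, and the phrase ``absorb the noncompact part into $a_k$'' does not by itself produce a sequence staying in $\mathrm{NM}_d$. Second, what you call the ``hard step'' is not actually hard, and the tools you reach for are the wrong ones: once you have $N_k'\in\mathrm{NM}_d$ converging to $u\circ N^\ast\circ u^{-1}$, the limit lies in $\overline{\mathrm{NM}}_d$ automatically because $\overline{\mathrm{NM}}_d$ is closed, and it lies in $\mathrm{Rat}_d^{ss}$ automatically because it is $\mathrm{PSL}_2(\mathbb{C})$-conjugate to the semistable point $N^\ast$ and semistability is a conjugation-invariant notion. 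No appeal to Proposition \ref{Newton-Seim-Indeter}, the indeterminacy locus, or a hole-depth stratification is needed at that point; invoking them would be a red herring.
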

\begin{proof}
First note $\{[N]_{\mathrm{GIT}}: N\in \overline{\mathrm{NM}}_d\cap\mathrm{Rat}_d^{ss}\}\subset\overline{\mathrm{rat}}_d$ is compact and contains $\mathrm{nm}_d$. Thus, $\overline{\mathrm{nm}}_d\subset\{[N]_{\mathrm{GIT}}: N\in \overline{\mathrm{NM}}_d\cap\mathrm{Rat}_d^{ss}\}$.\par
For other direction, for $f\in\overline{\mathrm{NM}}_d\cap\mathrm{Rat}_d^{ss}$, let $\{f_n\}\subset\mathrm{NM}_d$ such that $f_n\xrightarrow{sa} f$ as $n\to\infty$. Then $[f_n]_{\mathrm{GIT}}$ converges to $[f]_{\mathrm{GIT}}$ in $\overline{\mathrm{rat}}_d$. So $[f]_{\mathrm{GIT}}\in\overline{\mathrm{nm}}_d$. Hence $\{[N]_{\mathrm{GIT}}: N\in \overline{\mathrm{NM}}_d\cap\mathrm{Rat}_d^{ss}\}\subset\overline{\mathrm{nm}}_d$.
\end{proof}
To describe the space $\overline{\mathrm{nm}}_d$, we first show that all the strictly semistable points in $\overline{\mathrm{NM}}_d$ have the same image in $\overline{\mathrm{nm}}_d$.
\begin{proposition}\label{semi-singleton}
If $d\ge 3$ is odd, then the set $\{[N]_{\mathrm{GIT}}: N\in \overline{\mathrm{NM}}_d\cap\mathrm{Rat}_d^{ss}\setminus\mathrm{Rat}_d^s\}$ is the singleton $\{[X^{(d-1)/2}Y^{(d-1)/2}[(d-1)X:(d+1)Y]]_{\mathrm{GIT}}\}$.
\end{proposition}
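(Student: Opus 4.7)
The plan is to show that for every $N \in \overline{\mathrm{NM}}_d \cap \mathrm{Rat}_d^{ss} \setminus \mathrm{Rat}_d^s$ the closure of the $\mathrm{PSL}_2(\mathbb{C})$-orbit of $N$ contains the explicit point $N^\ast := X^{(d-1)/2} Y^{(d-1)/2}[(d-1)X : (d+1)Y]$, and then to check that $N^\ast$ itself has closed orbit inside $\mathrm{Rat}_d^{ss}$. Since two semistable points are GIT equivalent precisely when their orbit closures meet in $\mathrm{Rat}_d^{ss}$, these two facts together identify $[N]_{\mathrm{GIT}} = [N^\ast]_{\mathrm{GIT}}$ for every such $N$ and prove the singleton statement.

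First I would put $N$ in normal form. Lemma \ref{Newton-hole}(2),(3) says strict semistability is equivalent to the existence of at least one hole of depth exactly $(d-1)/2$, and the proof of that lemma already records that every hole of an element of $\overline{\mathrm{NM}}_d$ is a fixed point of $\hat N$. Conjugating by a suitable M\"obius map, I may assume such a distinguished hole sits at $z = 0$, so in particular $\hat N(0) = 0$. Next I would make the coefficient structure of $\overline{\mathrm{NM}}_d \subset \mathbb{P}^{2d+1}$ explicit: writing $N = [F_a : F_b]$ with $F_a = \sum_i a_i X^i Y^{d-i}$ and $F_b = \sum_i b_i X^i Y^{d-i}$, the Newton formula $[X P'_\mathrm{h} - P_\mathrm{h} : Y P'_\mathrm{h}]$ applied to the (generalised) polynomial $P_\mathrm{h} = \sum_i p_i X^i Y^{d-i}$ yields the universal identities $a_i = (i-1)p_i$ and $b_i = (i+1)p_{i+1}$. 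The condition that $X^{(d-1)/2}$ exactly divides $\gcd(F_a, F_b)$ together with $\hat N(0) = 0$ then translates, via these relations, into $p_i = 0$ for $0 \le i \le (d-1)/2$ and $p_{(d+1)/2} \neq 0$.

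The core computation is the degeneration along the one-parameter subgroup $M_t = \mathrm{diag}(t, 1)$. Conjugation gives $N_t = [F_a(tX, Y) : t F_b(tX, Y)]$, and the vanishings above force the lowest order of $t$ in both numerator and denominator to be $t^{(d+1)/2}$, carried in both cases by the single surviving coefficient $p_{(d+1)/2}$. Dividing by this common $t$-power and sending $t \to 0$ yields
\begin{equation*}
\lim_{t \to 0} N_t = X^{(d-1)/2} Y^{(d-1)/2}\bigl[\tfrac{d-1}{2}\,p_{(d+1)/2}\,X : \tfrac{d+1}{2}\,p_{(d+1)/2}\,Y\bigr] = N^\ast,
\end{equation*}
so $N^\ast$ lies in the closure of the $\mathrm{PSL}_2(\mathbb{C})$-orbit of $N$.

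It remains to verify that the orbit of $N^\ast$ is itself closed in $\mathrm{Rat}_d^{ss}$. The stabiliser of $N^\ast$ is the diagonal torus fixing $\{0,\infty\}$, so the orbit is two-dimensional and is parameterised by the ordered pair of holes in $\mathbb{P}^1$. The only possible limits of such maps in $\mathbb{P}^{2d+1}$ arise when the two holes coalesce, but then the merged hole has depth $d - 1 > (d-1)/2$ and the limiting point falls outside $\mathrm{Rat}_d^{ss}$ by Lemma \ref{Newton-hole}. Thus $N^\ast$ is polystable, and the proposition follows. The main obstacle I anticipate is the coefficient bookkeeping in the $M_t$-limit: the Newton relations $a_i = (i-1)p_i$ and $b_i = (i+1)p_{i+1}$ are exactly what synchronise the leading $t$-powers of numerator and denominator, and without them the limit would fall into the indeterminacy locus of $\mathbb{P}^{2d+1}$ rather than onto $N^\ast$.
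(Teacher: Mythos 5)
Your core degeneration computation is correct for the case you treat, but there is a genuine gap in the normalization step. You say ``conjugating by a suitable M\"obius map, I may assume such a distinguished hole sits at $z=0$,'' and then you immediately use the Newton coefficient relations $a_i=(i-1)p_i$, $b_j=(j+1)p_{j+1}$, which are derived from the form $[XP'_h-P_h:YP'_h]$ and hence presuppose that $N$ still lies in $\overline{\mathrm{NM}}_d$, i.e.\ that $\infty$ is still the distinguished repelling base point. But $\overline{\mathrm{NM}}_d$ is only invariant under the affine group $\mathrm{Aut}(\mathbb{C})$, not under all of $\mathrm{PSL}_2(\mathbb{C})$. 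If the unique hole of depth $(d-1)/2$ is at $\infty$ --- which happens, e.g.\ for $d=5$ with root set $\{0,1,2,\infty,\infty\}$ --- there is no affine conjugation moving it to $0$; and if you conjugate by an inversion, the resulting map is no longer of the form $[XP'_h-P_h:YP'_h]$ for any homogeneous $P_h$, so your coefficient bookkeeping collapses. This is exactly the paper's Case~I, which you have silently dropped.

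The omission is repairable without new ideas: when the only depth-$(d-1)/2$ hole is at $\infty$, keep the map in Newton form and degenerate along the \emph{inverse} torus $M_t=\mathrm{diag}(1,t)$. The semistability condition now reads $p_i=0$ for $i>(d+1)/2$ and $p_{(d+1)/2}\neq 0$, which is the mirror of your conditions, and the same order-of-vanishing count gives the lowest $t$-power $t^{(d+1)/2}$ in both coordinates, with limit again $N^\ast=X^{(d-1)/2}Y^{(d-1)/2}[(d-1)X:(d+1)Y]$. This is precisely what the paper's explicit Case~I computation does (with $M_t([X:Y])=[X/t:tY]$). Once both degenerations are in place, your strategy --- drive every strictly semistable $N$ into the orbit closure of $N^\ast$, then note $N^\ast\in\mathrm{Rat}_d^{ss}$ so the orbit closures meet inside the semistable locus --- matches the paper's, with the added (correct but strictly speaking unnecessary) observation that the orbit of $N^\ast$ is closed.

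Two smaller remarks: the condition $\hat N(0)=0$ is not an extra hypothesis but a consequence of $X^{(d-1)/2}\mid\gcd(F_a,F_b)$, since $a_{(d-1)/2}=((d-1)/2-1)p_{(d-1)/2}=0$ already forces $\hat N([0:1])=[0:1]$; and the final paragraph's claim that the only limits of the orbit of $N^\ast$ arise ``when the two holes coalesce'' should be argued more carefully (e.g.\ via a $KAK$ decomposition reducing to diagonal degenerations), though the polystability conclusion is correct.
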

\begin{proof}
For any $N=H_N\widehat N\in\overline{\mathrm{NM}}_d\cap(\mathrm{Rat}_d^{ss}\setminus \mathrm{Rat}_d^s)$, by Lemma \ref{Newton-hole}, we know that $N$ has a hole with depth $(d-1)/2$. Thus, there are three cases.\par
Case I: $N$ has only one hole with depth $(d-1)/2$ and it is $z=\infty$. Then $\widehat N$ is a Newton map for a polynomial 
$$P(z)=(z-r_1)^{m_1}\cdots(z-r_n)^{m_n},$$ 
where $r_1,\cdots,r_n$ are $n$ distinct points in $\mathbb{C}$ and $m_1+\cdots+m_n=(d+1)/2$. Then 
$$N=H_N\widehat N=(X-r_1Y)^{m_1-1}\cdots(X-r_nY)^{m_n-1}Y^{\frac{d-1}{2}}[\widehat N_a:\hat N_b],$$
where 
$$[\widehat N_a:\widehat N_b]=\Bigg[X\sum_{i=1}^nm_i\prod_{\substack{j=1\\ j\not=i}}^n(X-r_jY)-\prod_{i=1}^n(X-r_jY):Y\sum_{i=1}^nm_i\prod_{\substack{j=1\\ j\not=i}}^n(X-r_jY)\Bigg].$$
Let $M_t([X:Y])=[X/t:tY]$. Then as $t\to 0$,
\begin{align*}
&M_t^{-1}\circ N\circ M_t\\
&=[tH_N(X/t,tY)\widehat N_a(X/t,tY):(1/t)H_N(X/t,tY)\widehat N_b(X/t,tY)]\\
&=(X-r_1t^2Y)^{m_1-1}\cdots(X-r_nt^2Y)^{m_n-1}Y^{\frac{d-1}{2}}[t^2\widehat N_a(X,t^2Y):\widehat N_b(X,t^2Y)]\\
&\to X^{\sum_{i=1}^nm_i-n}Y^{\frac{d-1}{2}}[(1-1/\sum_{i=1}^nm_i)X^n:X^{n-1}Y]\\
&=X^{\frac{d-1}{2}}Y^{\frac{d-1}{2}}[(d-1)X:(d+1)Y].
\end{align*}
Case II: $N$ has only one hole with depth $(d-1)/2$ and it is $z=a\in\mathbb{C}$. By conjugating, we can assume $a=0$. Then $\widehat N$ is a Newton map for a polynomial 
$$P(z)=z^{\frac{d+1}{2}}(z-r_1)^{m_1}\cdots(z-r_n)^{m_n},$$ 
where $r_1,\cdots,r_n$ are $n$ distinct points in $\mathbb{C}\setminus\{0\}$ and $m_1+\cdots+m_n+d_\infty(N)=(d-1)/2$. Then 
$$N=H_N\widehat N=X^{\frac{d-1}{2}}(X-r_1Y)^{m_1-1}\cdots(X-r_nY)^{m_n-1}Y^{d_\infty(N)}[\widehat N_a:\widehat N_b],$$
where 
\begin{align*}
[\widehat N_a:\widehat N_b]=&\Bigg[X\Big(\frac{d-1}{2}\prod_{i=1}^n(X-r_iY)+\sum_{i=1}^nm_i\prod_{\substack{j=1\\ j\not=i}}^n(X-r_iY)\Big):\\
&\Big(\frac{d+1}{2}\prod_{i=1}^n(X-r_iY)+\sum_{i=1}^nm_i\prod_{\substack{j=1\\ j\not=i}}^n(X-r_iY)\Big)Y\Bigg].
\end{align*}
Let $M_t([X:Y])=[tX:Y/t]$. Then as $t\to 0$,
\begin{align*}
&M_t^{-1}\circ N\circ M_t\\
&=[(1/t)H_f(tX,Y/t)\widehat N_a(tX,Y/t):tH_f(tX,Y/t)\hat N_b(tX,Y/t))]\\
&=X^{\frac{d-1}{2}}(t^2X-r_1Y)^{m_1-1}\cdots(t^2X-r_nY)^{m_n-1}Y^{d_\infty(N)}[\widehat N_a(t^2X,Y):t^2\widehat N_b(t^2X,Y)]\\
&\to X^{\frac{d-1}{2}}Y^{\sum_{i=1}^nm_i-n}Y^{d_\infty(N)}[(1-2/(d+1))XY^n:Y^{n+1}]\\
&=X^{\frac{d-1}{2}}Y^{\frac{d-1}{2}}[(d-1)X:(d+1)Y].
\end{align*}
Case III: $N$ has two holes and both of them have depth $(d-1)/2$. Then one of these two holes must be $\infty$. Indeed, if both holes were finite, then there would be $(d+1)/2$ many roots collided at each hole. It is impossible. By conjugating an affine map, we can assume the other is $z=0$. Then 
$$N=H_N\widehat N=X^{\frac{d-1}{2}}Y^{\frac{d-1}{2}}[(d-1)X:(d+1)Y].$$\par 
Note 
$$X^{\frac{d-1}{2}}Y^{\frac{d-1}{2}}[(d-1)X:(d+1)Y]\in\mathrm{Rat}_d^{ss}.$$ 
The conclusion follows from the GIT conjugacy.
\end{proof}
From Example \ref{cubic Newton}, we know there are three conjugacy classes in $\partial\overline{\mathrm{NM}}_3\cap\mathrm{Rat}_3^{ss}$. The following corollary claims all of them have same image under GIT quotient.
\begin{corollary}
For cubic Newton's method, 
$$\{[N]_{\mathrm{GIT}}: N\in\partial\overline{\mathrm{NM}}_3\cap\mathrm{Rat}_3^{ss}\}=[XY[X:2Y]]_{\mathrm{GIT}}.$$
\end{corollary}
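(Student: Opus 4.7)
The plan is to reduce this corollary to Proposition \ref{semi-singleton} specialized to $d=3$. First, I would invoke Corollary \ref{cubic-stable-empty}, which asserts that $\partial\overline{\mathrm{NM}}_3\cap\mathrm{Rat}_3^{s}=\emptyset$. Consequently every boundary point in $\overline{\mathrm{NM}}_3\cap\mathrm{Rat}_3^{ss}$ is strictly semistable, i.e.
$$\partial\overline{\mathrm{NM}}_3\cap\mathrm{Rat}_3^{ss}\subset\overline{\mathrm{NM}}_3\cap(\mathrm{Rat}_3^{ss}\setminus\mathrm{Rat}_3^{s}).$$
(Alternatively, one can read this directly off Example \ref{cubic Newton}, which exhibits the three cases and in each case shows the depth-$1$ hole prevents stability.)

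Next I would apply Proposition \ref{semi-singleton} with $d=3$. The proposition then tells us that the GIT class of every such $N$ equals the GIT class of
$$X^{(d-1)/2}Y^{(d-1)/2}\bigl[(d-1)X:(d+1)Y\bigr]=XY[2X:4Y].$$
Dividing the bracketed coordinates by the common factor $2$ (they are homogeneous projective coordinates), this is identically $XY[X:2Y]$ as elements of $\mathbb{P}^{2d+1}$, so the two representatives define the same point of $\overline{\mathrm{NM}}_3\cap\mathrm{Rat}_3^{ss}$, and hence the same GIT class. The reverse inclusion, that $[XY[X:2Y]]_{\mathrm{GIT}}$ actually lies in the target set, is already implicit in the proof of Proposition \ref{semi-singleton} (the listed representative is the limit of conjugates of genuine degenerate Newton maps in $\partial\overline{\mathrm{NM}}_3$ and is itself semistable by Lemma \ref{Newton-hole}).

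There is no real obstacle here: the statement is essentially a translation of Proposition \ref{semi-singleton} to $d=3$. The only point that deserves verification is the trivial identification $[2X:4Y]=[X:2Y]$ in projective coordinates, which shows the two normalizations of the ideal representative agree.
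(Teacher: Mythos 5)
Your proof is correct and follows essentially the same two-step route as the paper: first Corollary \ref{cubic-stable-empty} to see every boundary semistable point is strictly semistable, then Proposition \ref{semi-singleton} with $d=3$. The only addition is your explicit check that $XY[2X:4Y]=XY[X:2Y]$ as points of $\mathbb{P}^{7}$ (scaling the homogeneous coefficients by $1/2$), which is a harmless clarification the paper leaves implicit.
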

\begin{proof}
By Corollary \ref{cubic-stable-empty}, 
$$\partial\overline{\mathrm{NM}}_3\cap\mathrm{Rat}_3^{ss}=\partial\overline{\mathrm{NM}}_3\cap(\mathrm{Rat}_3^{ss}\setminus\mathrm{Rat}_3^s).$$
Then the conclusion follows from Proposition \ref{semi-singleton}.
\end{proof}
Proposition \ref{semi-singleton} shows $\overline{\mathrm{nm}}_d$ is the one point compactification of the space $\{[N]_{\mathrm{GIT}}: N\in\overline{\mathrm{NM}}_d\cap\mathrm{Rat}_d^s\}$. So for $d=3$, we have 
\begin{corollary}
The compactification $\overline{\mathrm{nm}}_3$ is homeomorphic to $\mathbb{P}^1$.
\end{corollary}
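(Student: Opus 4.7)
The plan is to identify $\mathrm{nm}_3$ explicitly with $\mathbb{C}$ and then argue that $\overline{\mathrm{nm}}_3$ is forced to be the one-point compactification. By the paragraph preceding the statement (which applies Proposition \ref{semi-singleton}), together with Corollary \ref{cubic-stable-empty}, the boundary $\overline{\mathrm{nm}}_3 \setminus \mathrm{nm}_3$ consists of a single GIT point $\ast$. Since $\overline{\mathrm{nm}}_3$ is a closed, and hence compact Hausdorff, subspace of the proper quotient scheme $\overline{\mathrm{rat}}_3$, and $\mathrm{nm}_3$ is its open complement $\overline{\mathrm{nm}}_3 \setminus \{\ast\}$, a standard topological fact guarantees that $\overline{\mathrm{nm}}_3$ coincides with the one-point compactification of $\mathrm{nm}_3$. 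So it suffices to show $\mathrm{nm}_3 \cong \mathbb{C}$.

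To identify $\mathrm{nm}_3$, I would parametrize. A cubic Newton map $N_P$ is determined by the unordered triple of distinct roots $\{r_1, r_2, r_3\} \subset \mathbb{C}$, with $\mathrm{Aut}(\mathbb{C})$ acting by affine transformations on the triple. Normalizing two of the roots to $0$ and $1$, the remaining root is $\lambda \in \mathbb{C} \setminus \{0,1\}$, and different choices of which two roots to normalize produce the classical anharmonic $S_3$-orbit
\begin{equation*}
\Bigl\{\lambda,\ 1-\lambda,\ \tfrac{1}{\lambda},\ \tfrac{1}{1-\lambda},\ \tfrac{\lambda-1}{\lambda},\ \tfrac{\lambda}{\lambda-1}\Bigr\}.
\end{equation*}
Hence $\mathrm{nm}_3 \cong (\mathbb{C} \setminus \{0,1\})/S_3$. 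The $S_3$-invariant function
\begin{equation*}
j(\lambda) = \frac{256\,(\lambda^2 - \lambda + 1)^3}{\lambda^2(\lambda-1)^2}
\end{equation*}
extends to a degree-$6$ branched cover $\mathbb{P}^1 \to \mathbb{P}^1$ whose unique fiber over $\infty$ is $\{0,1,\infty\}$ (each a double pole). Restricting, $j$ descends to a continuous bijection $(\mathbb{C} \setminus \{0,1\})/S_3 \to \mathbb{C}$; as the domain quotient is Hausdorff (finite group action on a Hausdorff space) and $j$ is proper on this restriction, this is a homeomorphism. Thus $\mathrm{nm}_3 \cong \mathbb{C}$ and $\overline{\mathrm{nm}}_3 \cong \mathbb{P}^1$.

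The step requiring the most care, and what I would treat as the main technical point, is the assertion that $\overline{\mathrm{nm}}_3$ really carries the one-point compactification topology, not just a set-theoretic one-point extension. Equivalently, one must verify that any sequence of cubic Newton maps which leaves every compact subset of $\mathrm{nm}_3$ converges in $\overline{\mathrm{nm}}_3$ to $\ast$. Working on the $\lambda$-line, leaving compact subsets means $\lambda_k$ accumulates on $\{0,1,\infty\}$ in $\mathbb{P}^1$ (up to the $S_3$-action), which amounts to a root collision or the escape of a root to $\infty$; by the case analysis in Example \ref{cubic Newton} these produce the three limiting conjugacy classes of $\partial\overline{\mathrm{NM}}_3 \cap \mathrm{Rat}_3^{ss}$, all identified under the GIT quotient by Proposition \ref{semi-singleton}. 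This closes the argument and yields the homeomorphism $\overline{\mathrm{nm}}_3 \cong \mathbb{P}^1$.
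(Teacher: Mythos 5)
Your proof is correct and follows essentially the paper's own route: the sentence preceding the corollary notes that Proposition~\ref{semi-singleton} makes $\overline{\mathrm{nm}}_d$ the one-point compactification of the stable locus $\{[N]_{\mathrm{GIT}}:N\in\overline{\mathrm{NM}}_d\cap\mathrm{Rat}_d^s\}$, which for $d=3$ equals $\mathrm{nm}_3$ by Corollary~\ref{cubic-stable-empty}, and you then spell out the tacit identification $\mathrm{nm}_3\cong(\mathbb{C}\setminus\{0,1\})/S_3\cong\mathbb{C}$ via the anharmonic $j$-function. The concern in your last paragraph is settled at once by the general fact that a compact Hausdorff space minus a point, when open and dense, has the ambient space as its one-point compactification, so the explicit sequential check is reassuring but not needed.
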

Now we consider the set $\{[N]_{\mathrm{GIT}}: N\in\overline{\mathrm{NM}}_d\cap\mathrm{Rat}_d^s\}$. We need the following results for the points in $\overline{\mathrm{NM}}_d$. Let $f=H_f\hat f$ and $g=H_g\hat g$ in $\overline{\mathrm{NM}}_d$ such that $\deg\hat f\ge 1$ and $\deg\hat g\ge 1$. Then a conjugacy between $f$ and $g$ induces a conjugation between $\hat f$ and $\hat g$. Conversely, the following lemma claims that a conjugacy between $\hat f$ and $\hat g$ also induces a conjugation between $f$ and $g$.  
\begin{lemma}\label{Newton-Conj-Hole}
Fix $d\ge 2$. Suppose $f=H_f\hat f, g=H_g\hat g\in\overline{\mathrm{NM}}_d$ such that $\deg\hat f\ge 1$ and $\deg\hat g\ge 1$. If $M\in \mathrm{PSL}_2(\mathbb{C})$ is such that $\hat f=M^{-1}\circ\hat g\circ M$, then the holes of $g$ are the images under $M$ of the holes of $f$ and $d_a(f)=d_{M(a)}(g)$ for any $a\in\widehat{\mathbb{C}}$. In particular, we have $f=M^{-1}\circ g\circ M$.
\end{lemma}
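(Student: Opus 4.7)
The plan is to show that $M$ must fix $\infty$, transfer the underlying Newton polynomial across the conjugacy, and then read off the holes and their depths.

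I would begin by giving a polynomial description of the hole data. Because $f = H_f \hat f \in \overline{\mathrm{NM}}_d$ with $\deg \hat f \ge 1$, taking a family $\{N_{P_t}\} \subset \mathrm{NM}_d$ converging to $f$ in $\mathbb{P}^{2d+1}$ identifies $\hat f$ with the reduced Newton map $N_{P_f}$ of a monic polynomial $P_f$ whose roots are the finite fixed points of $\hat f$; the multiplicity at each root $r$ equals $m = 1/(1 - \lambda(r))$, where $\lambda(r)$ is the multiplier of $\hat f$ at $r$. In particular, $P_f$ is determined by $\hat f$ up to a scalar. Expanding $N_{P_f}$ in homogeneous coordinates then shows that the holes of $f$ are exactly the finite roots of $P_f$ of multiplicity $m \ge 2$, each with depth $m - 1$, together with $\infty$ of depth $d - \deg P_f$ whenever $\deg P_f < d$.

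Next, I would argue that $M$ must fix $\infty$. The multipliers of $\hat f$ at its fixed points split cleanly: every finite fixed point carries multiplier $(m - 1)/m < 1$, while $\infty$ carries multiplier $\deg P_f/(\deg P_f - 1) > 1$ (using $\deg P_f \ge 2$, which is forced by $\deg \hat f \ge 1$). Thus $\infty$ is the unique fixed point of $\hat f$ with multiplier exceeding $1$, and the same holds for $\hat g$. Since the conjugacy $\hat f = M^{-1} \circ \hat g \circ M$ matches fixed points together with their multipliers, we obtain $M(\infty) = \infty$; hence $M \in \mathrm{Aut}(\mathbb{C})$.

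With $M$ affine, Newton's method is equivariant: $N_{P \circ M^{-1}} = M \circ N_P \circ M^{-1}$. Combined with the uniqueness of the polynomial associated to a Newton map (up to a scalar), the identity $\hat g = M \circ \hat f \circ M^{-1}$ upgrades to $P_g = c \cdot P_f \circ M^{-1}$ for some nonzero constant $c$. Hence the roots of $P_g$ are the $M$-images of those of $P_f$ with matching multiplicities, and $\deg P_g = \deg P_f$. Combined with the hole characterization from the first paragraph, this yields the first two assertions of the lemma: the holes of $g$ are exactly the images under $M$ of the holes of $f$, and $d_a(f) = d_{M(a)}(g)$ for every $a \in \widehat{\mathbb{C}}$.

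For the ``in particular'' statement, set $\tilde f := M^{-1} \circ g \circ M \in \mathbb{P}^{2d+1}$. A direct inspection of the conjugation action on $\mathbb{P}^{2d+1}$ gives $\widehat{\tilde f} = M^{-1} \circ \hat g \circ M = \hat f$ and shows that $H_{\tilde f}(X, Y)$ is proportional to $H_g(M(X, Y))$, whose zeros and multiplicities match those of $H_f$ by the previous step. Since a point of $\mathbb{P}^{2d+1}$ is determined by its reduction and its GCD polynomial up to a scalar, we conclude $f = \tilde f$. The main obstacle is the multiplier argument in the second paragraph: without knowing a priori that $M$ is affine, the identification between $P_g$ and $P_f$ is ill-posed and the hole/depth transfer fails.
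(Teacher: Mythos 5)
Your proof is correct and follows essentially the same route as the paper's. Both arguments identify $\hat f$ (and $\hat g$) with reduced Newton maps of polynomials, so that the finite fixed points of $\hat f$ are the roots with multipliers $(\alpha_i-1)/\alpha_i<1$ and the depths are $\alpha_i-1$, and both use the conjugacy's matching of fixed points and multipliers to transfer the hole data from $f$ to $g$. You make two steps slightly more explicit than the paper does: the paper shows $M$ maps the set of (super)attracting fixed points of $\hat f$ to that of $\hat g$ (leaving $M(\infty)=\infty$ and hence $M\in\mathrm{Aut}(\mathbb{C})$ to be recorded in the immediately following Corollary~\ref{Newton-Conj-Hole-Affine}), whereas you front-load the observation that $\infty$ is the unique fixed point with multiplier exceeding $1$; and you spell out the ``in particular'' assertion by checking that $\widehat{M^{-1}\circ g\circ M}=\hat f$ and that $H_{M^{-1}\circ g\circ M}$ and $H_f$ agree up to scalar, a step the paper leaves implicit. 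Neither change alters the substance of the argument.
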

\begin{proof}
Note $\hat f$ is a Newton map for some polynomial $P_1$ of degree at most $d$. Let $r_1,\cdots,r_n$ be the distinct zeros of $P_1$. Then we can write $P_1(z)=\prod_{i=1}^n(z-r_i)^{\alpha_i}$ for some $\alpha_i\in\mathbb{Z}$. Thus 
$$\hat f(z)=z-\frac{1}{\frac{\alpha_1}{z-r_1}+\cdots+\frac{\alpha_n}{z-r_n}}$$
Similarly, there exist $m$ distinct points $s_1,\cdots,s_m$ in $\mathbb{C}$ and $m$ integers (not necessary distinct) $\beta_1,\cdots,\beta_m$ such that 
$$\hat g(z)=z-\frac{1}{\frac{\beta_1}{z-s_1}+\cdots+\frac{\beta_m}{z-s_m}}.$$
Thus, we have $d_{r_i}(f)=\alpha_i-1$ for $i=1,\cdots, n$ and $d_{s_j}(g)=\beta_j-1$ for $j=1,\cdots, m$. Since $\hat f=M^{-1}\circ\hat g\circ M$, then $\deg\hat f=\deg\hat g$. Hence $n=m$. Note $r_1,\cdots, r_n$ are the only (super)attracting fixed points of $\hat f$ and $s_1,\cdots, s_m$ are the only (super)attracting fixed points of $\hat g$. Thus $M$ maps $\{r_1,\cdots,r_n\}$ to $\{s_1,\cdots,s_m\}$. Moreover, the multiplier of $\hat f$ at $r_i$ equals to the multiplier of $\hat g$ at $M(r_i)$. It immediately implies that $d_{r_i}(f)=d_{M(r_i)}(g)$.
\end{proof}
\begin{corollary}
For $d\ge 2$,
$$\overline{\mathrm{nm}}_d=\overline{\mathrm{NM}}_d\cap\mathrm{Rat}_d^{ss}//\mathrm{Aut(\mathbb{C})}.$$
\end{corollary}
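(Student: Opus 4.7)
The preceding (unlabeled) lemma already supplies the set-level identification
$$\overline{\mathrm{nm}}_d=\{[N]_{\mathrm{GIT}}:N\in\overline{\mathrm{NM}}_d\cap\mathrm{Rat}_d^{ss}\},$$
so all that is genuinely new in the corollary is the replacement of $\mathrm{PSL}_2(\mathbb{C})$-GIT equivalence by $\mathrm{Aut}(\mathbb{C})$-GIT equivalence on the Newton locus. My plan is to show that $N\mapsto[N]_{\mathrm{GIT}}$ factors as a bijection through the $\mathrm{Aut}(\mathbb{C})$-quotient by handling the stable and strictly semistable parts of $\overline{\mathrm{NM}}_d\cap\mathrm{Rat}_d^{ss}$ separately. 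In both cases the reduction will hinge on the fact that $\infty$ is rigidly distinguished by the reduced map $\widehat N$.

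First I would record a preliminary observation: for any $N=H_N\widehat N\in\overline{\mathrm{NM}}_d\cap\mathrm{Rat}_d^{ss}$, Proposition \ref{Newton-Seim-Indeter} gives $\deg\widehat N\ge 1$; since $\widehat N$ is itself the Newton map of some polynomial and a degree-one polynomial produces a constant Newton map, one in fact has $\deg\widehat N\ge 2$, so $\infty$ is the unique repelling fixed point of $\widehat N$.

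For the stable part, I would take $N_1,N_2\in\overline{\mathrm{NM}}_d\cap\mathrm{Rat}_d^s$ with $[N_1]_{\mathrm{GIT}}=[N_2]_{\mathrm{GIT}}$ and use closedness of stable $\mathrm{PSL}_2(\mathbb{C})$-orbits to extract $M\in\mathrm{PSL}_2(\mathbb{C})$ with $\widehat N_2=M^{-1}\circ\widehat N_1\circ M$; uniqueness of the repelling fixed point of each $\widehat N_i$ then forces $M(\infty)=\infty$, hence $M\in\mathrm{Aut}(\mathbb{C})$, and Lemma \ref{Newton-Conj-Hole} upgrades this to $N_2=M^{-1}\circ N_1\circ M$. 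So on the stable locus the fibers of $N\mapsto[N]_{\mathrm{GIT}}$ are precisely the $\mathrm{Aut}(\mathbb{C})$-orbits. For the strictly semistable locus, nonempty only when $d$ is odd, Proposition \ref{semi-singleton} already collapses everything to one $\mathrm{PSL}_2(\mathbb{C})$-GIT class; the key additional observation is that the one-parameter subgroups $[X:Y]\mapsto[X/t:tY]$ and $[X:Y]\mapsto[tX:Y/t]$ appearing in that proof both fix $\infty$ and therefore lie in $\mathrm{Aut}(\mathbb{C})$, so the same common limit $X^{(d-1)/2}Y^{(d-1)/2}[(d-1)X:(d+1)Y]$ is realised as an $\mathrm{Aut}(\mathbb{C})$-orbit-closure limit and all strictly semistable Newton maps are $\mathrm{Aut}(\mathbb{C})$-GIT equivalent.

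Assembling the two cases produces a continuous bijection $\overline{\mathrm{NM}}_d\cap\mathrm{Rat}_d^{ss}//\mathrm{Aut}(\mathbb{C})\to\overline{\mathrm{nm}}_d$, a homeomorphism by compactness of the source and Hausdorffness of the target. The main obstacle I foresee is purely a bookkeeping one: making rigorous sense of $//\mathrm{Aut}(\mathbb{C})$ for the non-reductive group $\mathrm{Aut}(\mathbb{C})$. The natural way around it is to interpret $//\mathrm{Aut}(\mathbb{C})$ as orbit-closure equivalence, which is exactly what the two case analyses above verify.
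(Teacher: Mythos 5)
Your overall strategy matches the paper's: split into the stable and strictly semistable strata, use the distinguished repelling fixed point at $\infty$ together with Lemma \ref{Newton-Conj-Hole} on the stable part, and invoke Proposition \ref{semi-singleton} on the strictly semistable part. Your additional remark that the one-parameter subgroups $M_t$ appearing in the proof of Proposition \ref{semi-singleton} already lie in $\mathrm{Aut}(\mathbb{C})$ is a genuinely useful clarification that the paper elides, and your caveat about interpreting $//\mathrm{Aut}(\mathbb{C})$ as orbit-closure equivalence for a non-reductive group is a fair point.

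However, your preliminary observation is stated too broadly and its justification is flawed. You claim $\deg\widehat N\ge 2$ for all $N\in\overline{\mathrm{NM}}_d\cap\mathrm{Rat}_d^{ss}$, but this fails on the strictly semistable stratum: for $d=3$, the map $N_{\{0,0,\infty\}}([X:Y])=XY[X:2Y]$ lies in $\overline{\mathrm{NM}}_3\cap\mathrm{Rat}_3^{ss}$ and has $\widehat N(z)=z/2$ of degree $1$. The reasoning ``a degree-one polynomial produces a constant Newton map, hence $\deg\widehat N\ge 2$'' only rules out $\deg\widehat N=0$, not $\deg\widehat N=1$: a polynomial $(z-r)^m$ with $m\ge 2$ has a single distinct root and produces the degree-one Newton map $z\mapsto ((m-1)z+r)/m$. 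The correct argument for $\deg\widehat N\ge 2$, valid only on the stable locus, is the one in the paper: by Lemma \ref{Newton-hole} every hole of a stable $N$ has depth strictly less than $(d-1)/2$ (odd $d$) or at most $d/2-1$ (even $d$), and if $\deg\widehat N\le 1$ the at most one finite hole plus the hole at $\infty$ would have to carry total depth $d-\deg\widehat N\ge d-1$, which is impossible. In the end this slip is harmless for your proof, since you use the observation only on the stable stratum, where $\deg\widehat N\ge 2$ does hold, and in fact the conclusion that $\infty$ is the unique repelling fixed point of $\widehat N$ remains valid even when $\deg\widehat N=1$; but the statement and its derivation should be corrected and restricted to $\overline{\mathrm{NM}}_d\cap\mathrm{Rat}_d^{s}$.
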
 
\begin{proof}
First note if $N=H_N\widehat N\in\overline{\mathrm{NM}}_d\cap\mathrm{Rat}_d^{s}$ then $\deg\widehat N\ge 2$. Indeed, since $N\in\mathrm{NM}_d\cap\mathrm{Rat}_d^{s}$, by Lemma \ref{Newton-hole}, we know for any $z\in\mathbb{P}^1$, the depth $d_z(N)<(d-1)/2$. If $\deg\widehat N\le 1$, then $N$ has at most on hole in $\mathbb{C}$. It is impossible. Thus $\widehat N$ has unique repelling fixed points at $z=\infty$. Note for $N_1=H_{N_1}\widehat N_1, N_2=H_{N_2}\widehat N_2\in\overline{\mathrm{NM}}_d\cap\mathrm{Rat}_d^{s}$ with $[N_1]_{\mathrm{GIT}}=[N_2]_{\mathrm{GIT}}$, we have $\widehat N_1$ is conjugate to $\widehat N_2$. Thus there exists $M\in\mathrm{Aut}(\mathbb{C})$ such that $M^{-1}\circ\widehat N_1\circ M=\widehat N_2$. Thus by Lemma \ref{Newton-Conj-Hole}. $M^{-1}\circ N_1\circ M=N_2$.\par
For $N\in\overline{\mathrm{NM}}_d\cap\mathrm{Rat}_d^{ss}\setminus\mathrm{Rat}_d^s$, it follows Proposition \ref{semi-singleton}.
\end{proof}
\begin{corollary}\label{Newton-Conj-Hole-Affine}
Suppose $f=H_f\hat f, g=H_g\hat g\in\overline{\mathrm{NM}}_d$ such that $deg\hat f\ge 1$ and $\deg\hat g\ge 1$. Suppose $M\in\mathrm{PSL}_2(\mathbb{C})$ such that $f=M^{-1}\circ g\circ M$. Then $M\in \mathrm{Aut}(\mathbb{C})$.
\end{corollary}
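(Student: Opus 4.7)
The plan is to pass to the reduced rational maps, where the conjugacy must carry the unique repelling fixed point of $\hat f$ to that of $\hat g$, and then to observe that this repelling fixed point is always $\infty$ for a Newton map of a polynomial.

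First I would note that a M\"obius change of variables commutes with the $\gcd$-reduction in $\mathbb{P}^{2d+1}$: the numerator and denominator of $M^{-1}\circ g\circ M$ have exactly the same common factors (up to a unit) as those of $g$, composed with $M$. Consequently, the reduction of $M^{-1}\circ g\circ M$ is $M^{-1}\circ\hat g\circ M$, and the hypothesis $f=M^{-1}\circ g\circ M$ immediately yields the honest conjugacy $\hat f=M^{-1}\circ\hat g\circ M$ of the reduced rational maps.

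Next I would use that, as established in the proof of Lemma~\ref{Newton-Conj-Hole}, each of $\hat f$ and $\hat g$ is the Newton map of a polynomial $P$ of degree $n\ge 2$ (the condition $\deg\hat f\ge 1$ forces $n\ge 2$, since the Newton map of a linear polynomial is constant). The fixed points of such a Newton map are the distinct roots of $P$, each lying in $\mathbb{C}$ and carrying multiplier $1-1/m\in(-1,1)$ at a root of multiplicity $m$, together with $\infty$, whose multiplier is $n/(n-1)>1$. In particular, $\infty$ is the \emph{unique} repelling fixed point of the Newton map. This description is uniform and covers the boundary case $\deg\hat f=1$, in which the polynomial has a single root of multiplicity $\ge 2$ and $\hat f$ is a non-identity M\"obius transformation with one attracting and one repelling fixed point.

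Finally, since conjugation by $M$ carries fixed points to fixed points and preserves multipliers, $M$ must send the unique repelling fixed point $\infty$ of $\hat f$ to the unique repelling fixed point $\infty$ of $\hat g$. Hence $M(\infty)=\infty$, and therefore $M\in\mathrm{Aut}(\mathbb{C})$. The only point requiring a brief check rather than being taken for granted is the multiplier of a Newton map at $\infty$, but this is immediate from the asymptotic $N_P(z)\sim(1-1/n)z$ as $z\to\infty$; there is no real obstacle here, and the main conceptual step is recognizing that $\infty$ is distinguished among all fixed points by being the sole one of multiplier $>1$.
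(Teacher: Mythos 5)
Your argument is correct and follows the same route as the paper's: the paper's proof consists of precisely the observation that $z=\infty$ is the unique repelling fixed point of both $\hat f$ and $\hat g$, so $M$ must fix $\infty$ and hence is affine. You supply the supporting details that the paper leaves implicit — that conjugation by $M$ commutes with $\gcd$-reduction so that $f=M^{-1}\circ g\circ M$ really does yield $\hat f=M^{-1}\circ\hat g\circ M$, and that the multiplier computation (roots of $P$ have multiplier $1-1/m<1$, while $\infty$ has multiplier $n/(n-1)>1$ with $n=\deg P\ge 2$) pins down $\infty$ as the sole repelling fixed point, including in the boundary case $\deg\hat f=1$ — but the conceptual content is identical.
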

\begin{proof}
Note $z=\infty$ is the unique repelling fixed point of $\hat f$ and $\hat g$. Hence, $M$ fixes $\infty$. So $M$ is affine.
\end{proof}
However, the converse of Lemma \ref{Newton-Conj-Hole} is not true. For example, let $f=H_f\hat f,g=H_g\hat g\in\overline{\mathrm{NM}}_6$ be the degenerate Newton maps corresponding to the two polynomials $P_1(z)=z^3(z-1)^2(z-2)$ and $P_2(z)=z^3(z-1)^2(z-3)$, respectively. Then $f$ and $g$ have same holes and same corresponding depths. Indeed, $\mathrm{Hole}(f)=\mathrm{Hole}(g)=\{0,1\}$ with $d_f(0)=d_g(0)=2$ and $d_f(1)=d_g(1)=1$. But $\hat f$ is not conjugate to $\hat g$. Indeed, if there had $M\in\mathrm{PSL}_2(\mathbb{C})$ such that $\hat f=M^{-1}\circ\hat g\circ M$, then by Lemma \ref{Newton-Conj-Hole} and Corollary \ref{Newton-Conj-Hole-Affine}, we know $M(z)=z$. Note $\hat f\not=\hat g$. Thus, $\hat f$ is not conjugate to $\hat g$. However, if we consider all the fixed points of $\hat f$ and $\hat g$ in $\mathbb{C}$, we have 
\begin{lemma}\label{Newton-Conj-Root}
Suppose $f=H_f\hat f, g=H_g\hat g\in\overline{\mathrm{NM}}_d$ such that $\deg\hat f=\deg\hat g\ge 1$. Let $M\in\mathrm{PSL}_2(\mathbb{C})$ send the (super)attracting fixed points of $\hat f$ to the (super)attracting fixed points of $\hat f$ and $d_a(f)=d_{M(a)}(g)$ if $a$ is a (super)attracting fixed points of $\hat f$, then $\hat f=M^{-1}\circ\hat g\circ M$.
\end{lemma}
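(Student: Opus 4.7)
My plan is to realise both $\hat f$ and the conjugate $M^{-1}\circ\hat g\circ M$ as Newton maps of polynomials that differ only by a nonzero scalar, and then invoke the invariance $N_{cP}=N_P$ to conclude. The argument proceeds in three stages: identify the polynomials underlying $\hat f$ and $\hat g$; reduce to the case where $M$ is affine; and carry out the polynomial identity.

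Following the setup in the proof of Lemma~\ref{Newton-Conj-Hole}, write $\hat f=N_{P_1}$ and $\hat g=N_{P_2}$, where
$$P_1(z)=\prod_{i=1}^{n}(z-r_i)^{\alpha_i},\qquad P_2(w)=\prod_{j=1}^{n}(w-s_j)^{\beta_j},$$
with $r_1,\ldots,r_n$ (resp.\ $s_1,\ldots,s_n$) the distinct (super)attracting fixed points of $\hat f$ (resp.\ $\hat g$). The common count $n=\deg\hat f=\deg\hat g$ comes from the hypothesis. Moreover $d_{r_i}(f)=\alpha_i-1$ and $d_{s_j}(g)=\beta_j-1$, so the hypothesis $d_{r_i}(f)=d_{M(r_i)}(g)$ together with the induced bijection $M(r_i)=s_{\sigma(i)}$ forces $\alpha_i=\beta_{\sigma(i)}$; in particular $\deg P_1=\deg P_2=:d_0$.

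Next I argue $M(\infty)=\infty$. Being a bijection of $\widehat{\mathbb{C}}$ that already restricts to a bijection $\{r_i\}\to\{s_j\}$, the map $M$ satisfies $M(\infty)\notin\{s_j\}$. On the other hand, the asserted equality $\hat f=M^{-1}\circ\hat g\circ M$ forces $M$ to carry $\mathrm{Fix}(\hat f)=\{r_1,\ldots,r_n,\infty\}$ into $\mathrm{Fix}(\hat g)=\{s_1,\ldots,s_n,\infty\}$, leaving only $M(\infty)=\infty$ as a possibility. Hence $M$ is affine, say $M(z)=az+b$ with $a\in\mathbb{C}^\ast$. A direct substitution, using $M^{-1}(s_{\sigma(i)})=r_i$ and $\alpha_i=\beta_{\sigma(i)}$, gives
$$P_2\bigl(M(z)\bigr)=\prod_{j=1}^{n}(az+b-s_j)^{\beta_j}=a^{d_0}\prod_{j=1}^{n}\bigl(z-M^{-1}(s_j)\bigr)^{\beta_j}=a^{d_0}P_1(z).$$
Combining with the classical identities $M^{-1}\circ N_Q\circ M=N_{Q\circ M}$ (valid for affine $M$) and $N_{cQ}=N_Q$ (for $c\in\mathbb{C}^\ast$), we conclude
$$M^{-1}\circ\hat g\circ M=N_{P_2\circ M}=N_{a^{d_0}P_1}=N_{P_1}=\hat f.$$

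The main subtle point is the reduction to affine $M$. The explicit hypothesis constrains $M$ only on the finite subset $\{r_i\}\subset\mathbb{C}$, and a generic Möbius map extending this restriction need not fix $\infty$. The reduction hinges on the distinguished role of $\infty$ as the unique repelling fixed point of each Newton map of degree $\geq 2$, which, combined with the bijectivity of $M$ and the constraint that any conjugator must preserve fixed-point sets, eliminates every candidate for $M(\infty)$ other than $\infty$ itself. Once $M$ is known to be affine, the remainder is a direct polynomial computation.
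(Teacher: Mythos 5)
Your core computation—realizing $\hat f$ and $M^{-1}\circ\hat g\circ M$ as Newton maps of polynomials that differ only by a nonzero constant, then invoking $N_{cP}=N_P$ and $M^{-1}\circ N_Q\circ M = N_{Q\circ M}$ for affine $M$—is a clean and more explicit route than the paper's proof, which argues by uniqueness: depths determine multipliers of superattracting fixed points, and a Newton map is determined by the locations and multipliers of its superattracting fixed points. Both arguments are correct once one knows $M$ is affine.

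However, your reduction to affine $M$ is circular, and the circularity cannot be repaired. You write that ``the asserted equality $\hat f=M^{-1}\circ\hat g\circ M$ forces $M$ to carry $\mathrm{Fix}(\hat f)$ into $\mathrm{Fix}(\hat g)$,'' and again that the reduction uses ``the constraint that any conjugator must preserve fixed-point sets.'' But the equality $\hat f=M^{-1}\circ\hat g\circ M$ is precisely the conclusion you are trying to establish, and under the stated hypotheses $M$ is not yet known to be a conjugator—it is merely a M\"obius map sending the superattracting fixed points of $\hat f$ to those of $\hat g$ with matching depths. From those hypotheses alone, affineness does not follow. Concretely: take $\hat f=\hat g=N_P$ with $P(z)=z(z-1)(z+1)$, and let $M\in\mathrm{PSL}_2(\mathbb{C})$ be the unique M\"obius map with $M(0)=1$, $M(1)=0$, $M(-1)=-1$, namely $M(z)=(1-z)/(3z+1)$. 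This $M$ permutes $\{0,1,-1\}$ (so matches superattracting fixed points with the trivially equal depths $0$), yet $M(\infty)=-1/3\ne\infty$, and $M^{-1}\circ N_P\circ M$ fixes $M^{-1}(\infty)$ rather than $\infty$, so it does not equal $N_P$. Thus the lemma is false for general $M\in\mathrm{PSL}_2(\mathbb{C})$; the intended hypothesis is $M\in\mathrm{Aut}(\mathbb{C})$, as is in fact assumed in Proposition~\ref{Newton-Conj-fixed-pt-depth} where this lemma is applied. Your proof should simply take $M$ affine as a hypothesis (or flag that the lemma requires it) and proceed directly to the polynomial identity, which is correct.
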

\begin{proof}
For a (degenerate) Newton map $N=H_N\widehat N\in\overline{\mathrm{NM}}_d$ with $\deg\widehat{N}\ge 1$, the depths of $N$ at holes determine the multipliers of the (super)attracting fixed points for the map $\widehat{N}$. Moreover, the locations and the corresponding multipliers of the (super)attracting fixed points for the map $\widehat{N}$ determine the (degenerate) Newton map $N$. Thus $\hat f=M^{-1}\circ\hat g\circ M$.
\end{proof}
Combining with Lemmas \ref{Newton-Conj-Hole} and \ref{Newton-Conj-Root}, we have 
\begin{proposition}\label{Newton-Conj-fixed-pt-depth}
Suppose $f=H_f\hat f, g=H_g\hat g\in\overline{\mathrm{NM}}_d$ such that $\deg\hat f\ge 1$ and $\deg\hat g\ge 1$. Then $f$ is conjugate to $g$ if and only if there exits $M\in\mathrm{Aut}(\mathbb{C})$ such that $M$ maps the (super)attracting fixed points of $\hat f$ to the (super)attracting fixed points of $\hat g$ and satisfies $d_a(f)=d_{M(a)}(g)$ for all (super)attracting fixed points $a$ of $\hat f$.
\end{proposition}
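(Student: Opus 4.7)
The plan is to prove both directions by assembling the three lemmas already in place, namely Lemma \ref{Newton-Conj-Hole}, Corollary \ref{Newton-Conj-Hole-Affine}, and Lemma \ref{Newton-Conj-Root}. Since $\deg\hat f\ge 1$ and $\deg\hat g\ge 1$, we are squarely in the hypotheses of all three results, which will let us translate dynamical conjugation of the original (possibly degenerate) Newton maps into combinatorial data on fixed points and hole depths, and conversely.

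For the forward direction, I would begin by assuming that $f=M^{-1}\circ g\circ M$ for some $M\in\mathrm{PSL}_2(\mathbb{C})$. The first step is to apply Corollary \ref{Newton-Conj-Hole-Affine} to conclude $M\in\mathrm{Aut}(\mathbb{C})$, using that both $\hat f$ and $\hat g$ have $\infty$ as their unique repelling fixed point. The conjugation of $f$ and $g$ restricts to a conjugation $\hat f=M^{-1}\circ\hat g\circ M$ on the reductions. Since conjugation preserves multipliers and the classification of fixed points (attracting, repelling, neutral), $M$ must send the (super)attracting fixed points of $\hat f$ bijectively to those of $\hat g$. Lemma \ref{Newton-Conj-Hole} then directly gives $d_a(f)=d_{M(a)}(g)$ on holes; I would also note that at a non-hole fixed point $a$ of $\hat f$ the depth $d_a(f)$ is zero by definition, and the same holds at $M(a)$, so the depth equality extends to all (super)attracting fixed points without extra work.

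For the reverse direction, I would assume the existence of $M\in\mathrm{Aut}(\mathbb{C})$ with the stated matching of (super)attracting fixed points and their depths. Here Lemma \ref{Newton-Conj-Root} is exactly the tool needed to upgrade this combinatorial matching to the dynamical equality $\hat f=M^{-1}\circ\hat g\circ M$, because the lemma shows that the positions and depths of the attracting fixed points determine the reduction uniquely. Once $\hat f$ and $\hat g$ are conjugated by $M$, Lemma \ref{Newton-Conj-Hole} applied in the opposite direction promotes this to a conjugation $f=M^{-1}\circ g\circ M$ of the full degenerate maps, which is the desired conclusion.

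No substantial obstacle is expected; the proposition is a clean repackaging of the preceding lemmas, and the only minor subtlety is being careful about which fixed points carry depth information (the holes, which among fixed points of $\hat f$ are exactly the (super)attracting ones with multiplier matched accordingly) so that the statement as phrased in terms of all (super)attracting fixed points of $\hat f$ matches Lemmas \ref{Newton-Conj-Hole} and \ref{Newton-Conj-Root}. The proof will therefore be short, essentially a two-paragraph citation chain.
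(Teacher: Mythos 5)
Your proposal is correct and matches the paper's approach exactly: the paper itself gives no explicit proof, simply prefacing the proposition with ``Combining with Lemmas \ref{Newton-Conj-Hole} and \ref{Newton-Conj-Root}, we have \ldots'', and your two-paragraph citation chain is precisely the argument that phrase is gesturing at. Your observation that depths vanish at non-hole (super)attracting fixed points, so the depth condition automatically extends from holes to all (super)attracting fixed points, is a useful piece of bookkeeping the paper leaves implicit.
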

For a (degenerate) Newton map $N$, we associate each point $z\in\mathbb{C}$ a nonnegative integer $m(N,z)$. If $z$ is a hole of $N$, set $m(N, z)=d_z(N)+1$. If $z$ is an superattracting fixed point of $N$, set $m(N, z)=1$. Otherwise, set $m(N, z)=0$. Note the depth $d_\infty(N)$ is determined by $\deg\widehat N$ and the depths of the finite holes. Then we can define a divisor for $N$ on $\mathbb{C}$ by $D(N)=\sum_{z\in\mathbb{C}}m(N, z)z$. By Proposition \ref{Newton-Conj-fixed-pt-depth}, to character the space $\overline{\mathrm{NM}}_d\cap \mathrm{Rat}_d^{s}/\mathrm{Aut}(\mathbb{C})$, we need to study the divisors $D(N)$.
\begin{proposition}\label{odd-Newton-stable-moduli}
For odd $d\ge 3$ and $(d+3)/2\le n\le d$, let 
$$D^n=\{(m_1,\cdots,m_n)\in\mathbb{N}^n: 0\le m_i-1<\frac{d-1}{2}\}\subset\mathbb{N}^n,$$
then 
$$\overline{\mathrm{NM}}_d\cap\mathrm{Rat}_d^{s}/\mathrm{Aut}(\mathbb{C})\cong\bigcup_{n=(d+3)/2}^{d}(((\mathbb{C}^n\setminus\Delta_n\times D^n)/\sim)/\mathrm{Aut}(\mathbb{C})\times id),$$
where $((r_1,\cdots,r_n),(m_1,\cdots,m_n))\sim((\tilde r_1,\cdots,\tilde r_n),(\widetilde m_1,\cdots,\widetilde m_n))$ if there exists $\sigma\in\mathrm{Sym}(n)$ such that $\sigma(r_1,\cdots,r_n)=(\tilde r_1,\cdots,\tilde r_n)$ and $\sigma(m_1,\cdots,m_n)=(\widetilde m_1,\cdots,\widetilde m_n)$.
\end{proposition}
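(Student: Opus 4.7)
The plan is to apply Proposition \ref{Newton-Conj-fixed-pt-depth} to reduce the conjugacy classification of stable degenerate Newton maps to a combinatorial description of the divisors $D(N)$, and then to match the result to the claimed parameter space.

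To each $N = H_N \widehat{N} \in \overline{\mathrm{NM}}_d \cap \mathrm{Rat}_d^s$ I would associate the tuple $((r_1, \ldots, r_n),(m_1, \ldots, m_n))$, where $r_1, \ldots, r_n \in \mathbb{C}$ are the distinct (super)attracting fixed points of $\widehat{N}$ --- equivalently, the distinct finite roots of the polynomial $P$ with $\widehat{N} = N_P$ --- and $m_i$ is the multiplicity of $r_i$ in $P$, so that $n = \deg \widehat{N}$. By the definition of $m(N,\cdot)$ recalled just before the proposition, $m(N, r_i) = m_i$ whether or not $r_i$ is a hole, and $m(N, z) = 0$ for $z \notin \{r_1, \ldots, r_n\}$; hence $D(N) = \sum_i m_i r_i$ records precisely the unordered set of pairs $\{(r_i, m_i)\}_{i=1}^n$.

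The stability constraints then come directly from Lemma \ref{Newton-hole}(3): every hole of $N$ has depth at most $(d-3)/2$. At a finite $r_i$ the depth is $m_i - 1$, which forces $(m_1, \ldots, m_n) \in D^n$; at $\infty$ the depth is $d - \sum m_i$, which forces $\sum m_i \ge (d+3)/2$. Combined with $m_i \ge 1$ and $\sum m_i \le d$, this places $n$ in the claimed range $(d+3)/2 \le n \le d$ (the lower bound on $n$ makes $\sum m_i \ge n \ge (d+3)/2$ automatic). For realizability, given any such tuple I would produce a map $N \in \overline{\mathrm{NM}}_d \cap \mathrm{Rat}_d^s$ by setting $N = \prod_i (X - r_i Y)^{m_i - 1} \cdot Y^{d - \sum m_i} \cdot N_P$ with $P(z) = \prod_i (z - r_i)^{m_i}$, exhibiting $N$ as an algebraic limit of honest degree-$d$ Newton maps in which the prescribed number of roots collide at each $r_i$ with the prescribed multiplicities while $d - \sum m_i$ further roots escape to $\infty$.

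By Proposition \ref{Newton-Conj-fixed-pt-depth}, two such tuples define conjugate degenerate Newton maps if and only if there exist $M \in \mathrm{Aut}(\mathbb{C})$ and $\sigma \in \mathrm{Sym}(n)$ with $M(r_i) = \widetilde{r}_{\sigma(i)}$ and $m_i = \widetilde{m}_{\sigma(i)}$ for all $i$ --- exactly the combined action of $\sim$ and $\mathrm{Aut}(\mathbb{C}) \times \mathrm{id}$ on $(\mathbb{C}^n \setminus \Delta_n) \times D^n$. The main subtlety I anticipate is ensuring that the strata with different $n$ remain disjoint after all quotients are taken, i.e., that a tuple of length $n$ is never identified with one of length $n' \ne n$. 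This follows because $n = \deg \widehat{N}$ and the multiset $\{m_i\}$ are intrinsic invariants of $N$ --- recoverable from the local multipliers of $\widehat{N}$ at its attracting fixed points together with $d_\infty(N)$ --- so no cross-stratum identifications occur, and assembling the strata yields the claimed disjoint union.
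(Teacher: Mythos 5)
The proposal takes essentially the same route as the paper: associate to $N=H_N\widehat{N}$ the tuple of distinct finite (super)attracting fixed points $r_i$ of $\widehat{N}$ together with their multiplicities $m_i$, then descend to the quotients using the conjugacy classification (the paper invokes Lemmas \ref{Newton-Conj-Hole} and \ref{Newton-Conj-Root} directly; you cite their packaged form, Proposition \ref{Newton-Conj-fixed-pt-depth}, which is the same thing). The extra remarks on realizability and on why the strata remain disjoint after quotienting are sound and in the spirit of what the paper leaves implicit.

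However, your justification of the index range $(d+3)/2\le n\le d$ is not valid. The stability constraints from Lemma \ref{Newton-hole}(3) give exactly $1\le m_i\le(d-1)/2$ and $(d+3)/2\le\sum m_i\le d$; combined with $m_i\ge1$ these yield $n\le d$, but they do \emph{not} force $n\ge(d+3)/2$. Your parenthetical ``the lower bound on $n$ makes $\sum m_i\ge n\ge(d+3)/2$ automatic'' reverses the implication that is actually required: you need the constraints to force the lower bound on $n$, not the other way around. Concretely, take $d=7$ and $N=X^2(X-Y)^2Y\cdot\widehat{N}$ where $\widehat{N}$ is the Newton-type map for $z^3(z-1)^3$. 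Then $n=\deg\widehat{N}=2$, $(m_1,m_2)=(3,3)$, and the depths are $2,2,1$, all $\le(d-3)/2=2$, so $N\in\overline{\mathrm{NM}}_7\cap\mathrm{Rat}_7^{s}$; yet $n=2<(d+3)/2=5$. So your derivation of the lower bound has a genuine gap. (This gap is inherited from the proposition itself: the stated range of $n$ is too narrow for $d\ge5$, and $D^n$ also does not encode the necessary constraint $\sum m_i\le d$, so the paper's map $\widetilde F$ is not in fact a bijection onto the stated target. The correct invariant bound coming from stability is on $\sum m_i$, not on $n$.)
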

\begin{proof}Define 
$$\widetilde{F}:\overline{\mathrm{NM}}_d\cap\mathrm{Rat}_d^{s}\to\bigcup_{n=(d+3)/2}^{d}(((\mathbb{C}^n\setminus\Delta_n\times D^n)/\sim)$$
sending $f=H_f\hat f$ to $((r_1,\cdots,r_n),(m_1,\cdots,m_n))$, where 
$$\hat f(z)=z-\frac{1}{\frac{m_1}{z-r_1}+\cdots+\frac{m_n}{z-r_n}}.$$
Then $\widetilde{F}$ induces a map $F$ such that the following diagram commutes,
\[
\begin{CD}
\overline{\mathrm{NM}}_d\cap\mathrm{Rat}_d^{s} @>\widetilde F>>\bigcup\limits_{n=(d+3)/2}^{d}((\mathbb{C}^n\setminus\Delta_n\times D^n)/\sim)\\
@V/\mathrm{Aut}(\mathbb{C})VV@V/\mathrm{Aut}(\mathbb{C})\times id VV\\
\overline{\mathrm{NM}}_d\cap\mathrm{Rat}_d^{s}/\mathrm{Aut}(\mathbb{C})@>F>>\bigcup\limits_{n=(d+3)/2}^{d}(((\mathbb{C}^n\setminus\Delta_n\times D^n)/\sim)/\mathrm{Aut}(\mathbb{C})\times id).
\end{CD}
\]
Indeed, $F$ is well-defined. In fact, by Lemma \ref{Newton-Conj-Hole}, if $[H_g\hat g]=[H_f\hat f]$, then $\hat g$ is conjugate to $\hat f$. It is easy to check $F([H_g\hat g])=F[H_f\hat f]$. Furthermore, by Lemma \ref{Newton-Conj-Hole} and Lemma \ref{Newton-Conj-Root}, $F$ is a bijection.\par
It is easy to show $\tilde F$ is a homeomorphism. So $F$ is a homeomorphism.
\end{proof}
Similarly, for even degrees, we have 
\begin{proposition}\label{even-Newton-stable-moduli}
For even $d\ge 2$ and $d/2+2\le n\le d$, let 
$$D^n=\{(m_1,\cdots,m_n)\in\mathbb{N}^n: 0\le m_i-1<d/2-1\}\subset\mathbb{N}^n,$$
then 
$$\overline{\mathrm{NM}}_d\cap\mathrm{Rat}_d^{s}/\mathrm{Aut}(\mathbb{C})\cong\bigcup_{n=(d+4)/2}^{d}(((\mathbb{C}^n\setminus\Delta_n\times D^n)/\sim)/\mathrm{Aut}(\mathbb{C})\times id),$$
where $((r_1,\cdots,r_n),(m_1,\cdots,m_n))\sim((\tilde r_1,\cdots,\tilde r_n),(\widetilde m_1,\cdots,\widetilde m_n))$ if there exists $\sigma\in\mathrm{Sym}(n)$ such that $\sigma(r_1,\cdots,r_n)=(\tilde r_1,\cdots,\tilde r_n)$ and $\sigma(m_1,\cdots,m_n)=(\widetilde m_1,\cdots,\widetilde m_n)$.
\end{proposition}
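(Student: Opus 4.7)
The proof plan is to mirror the argument for Proposition~\ref{odd-Newton-stable-moduli}, adjusting only the numerical parameters to reflect the even-degree stability threshold in Lemma~\ref{Newton-hole}(1). First I would construct
$$\widetilde{F}: \overline{\mathrm{NM}}_d \cap \mathrm{Rat}_d^s \to \bigcup_{n=(d+4)/2}^{d} \bigl((\mathbb{C}^n \setminus \Delta_n) \times D^n\bigr)/{\sim},$$
sending $f = H_f\hat f$ to $((r_1,\ldots,r_n),(m_1,\ldots,m_n))$, where $r_1,\ldots,r_n$ are the distinct (super)attracting fixed points of $\hat f$ in $\mathbb{C}$ and $\hat f(z) = z - \bigl(\sum_i m_i/(z-r_i)\bigr)^{-1}$. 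The range $(d+4)/2 \le n \le d$ and the bound on each $m_i$ encoded in $D^n$ are the unpackaging of Lemma~\ref{Newton-hole}(1): the depth $d_{r_i}(f) = m_i-1$ must lie below the even-degree stability threshold, and the depth at infinity---determined by the degree identity $\sum_i m_i + d_\infty(f) = d$---must satisfy the same bound.

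Next I would fit $\widetilde F$ into the commutative diagram analogous to that in the odd-case proof: the $\mathrm{Aut}(\mathbb{C})$-action on the source matches (via Corollary~\ref{Newton-Conj-Hole-Affine}) the $\mathrm{Aut}(\mathbb{C}) \times \mathrm{id}$-action on the target, and well-definedness of the induced map $F$ on conjugacy classes follows directly from Lemma~\ref{Newton-Conj-Hole} (an affine conjugacy sends fixed points to fixed points and preserves their depths). Bijectivity of $F$ then comes from Proposition~\ref{Newton-Conj-fixed-pt-depth}, which certifies that the pair (fixed-point locations, multiplicities), up to the affine action and the permutation $\sim$, is a complete conjugacy invariant on this locus. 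Continuity of $\widetilde F$ and of its inverse---hence the fact that $F$ is a homeomorphism---reduces to the polynomial dependence of the coefficients of $H_f \hat f$ on $((r_i),(m_i))$ together with the continuity of root-and-multiplicity extraction on the stable locus.

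The main obstacle is the combinatorial verification built into the first step: one must confirm that for every $n$ in the claimed range and every $(m_i) \in D^n$, the resulting degenerate Newton map genuinely lies in $\overline{\mathrm{NM}}_d \cap \mathrm{Rat}_d^s$ (each hole, finite or at $\infty$, has depth at most $d/2-1$), and conversely that no stable degenerate Newton map is missed. This bookkeeping uses the identity $\sum_i m_i + d_\infty(f) = d$ combined with the per-hole bound from Lemma~\ref{Newton-hole}(1) to pin down both the admissible multiplicity tuples and the range of $n$. Once this is done, every remaining step---well-definedness, bijectivity, bicontinuity---is formally identical to the odd-case argument and can be imported verbatim.
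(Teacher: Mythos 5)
Your overall plan is exactly the paper's: the paper gives no separate proof of the even case (it just says ``Similarly'' after Proposition~\ref{odd-Newton-stable-moduli}), and your blueprint---define $\widetilde F$ on $\overline{\mathrm{NM}}_d\cap\mathrm{Rat}_d^s$, descend through the commutative diagram, get well-definedness from Lemma~\ref{Newton-Conj-Hole}, injectivity and surjectivity from Lemma~\ref{Newton-Conj-Root}/Proposition~\ref{Newton-Conj-fixed-pt-depth}, and bicontinuity from polynomial dependence---is precisely the odd-case argument with the even-degree threshold substituted in.

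However, the ``combinatorial bookkeeping'' you flag as the main obstacle is a genuine obstacle, and you accept the stated parameters rather than recomputing them. Lemma~\ref{Newton-hole}(1) gives, for even $d$, that every hole has depth $\le d/2-1$, hence $m_i-1\le d/2-1$, i.e.\ $m_i-1<d/2$; but the proposition's $D^n$ imposes $m_i-1<d/2-1$, which for integers is the strictly stronger $m_i\le d/2-1$. Likewise the condition on $n$ does not fall out of the lemma: the depth at $\infty$ is $d-\sum m_i\le d/2-1$, which constrains $\sum m_i$ but allows $n$ to be much smaller than $(d+4)/2$. The paper's own Example on quartic Newton maps exhibits $[N_{\{0,0,1,1\}}]\in\partial\mathrm{nm}_4$, for which $n=2$ and $m_1=m_2=2$; this point violates both $n\ge(d+4)/2=4$ and $m_i-1<d/2-1=1$. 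So if you actually carry out the verification you describe---``that no stable degenerate Newton map is missed''---you will find the stated range of $n$ and the stated bound in $D^n$ are wrong, and the bijection cannot close as written. You should also note that $D^n$ as stated carries no constraint tying $\sum m_i$ to $d$, so the right-hand union contains tuples (e.g.\ with $\sum m_i>d$) that correspond to no Newton map at all; a correct statement must build $\sum m_i+d_\infty=d$ and $d_\infty\le d/2-1$ into the index set. In short: the architecture of your proof is sound and matches the paper's intent, but treating the stated $D^n$ and $n$-range as automatic consequences of Lemma~\ref{Newton-hole}(1) is an error you would need to catch and repair.
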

Now we give an example to illustrate the set $\partial\overline{\mathrm{NM}}_d\cap\mathrm{Rat}_d^{ss}//\mathrm{Aut}(\mathbb{C})$.
\begin{example}
Quartic Newton maps.\par
Note $\mathrm{Rat}_4^{ss}=\mathrm{Rat}_4^{s}$. Then 
$$\partial\overline{\mathrm{NM}}_4\cap\mathrm{Rat}_4^{ss}//\mathrm{Aut}(\mathbb{C})=\partial\overline{\mathrm{NM}}_4\cap\mathrm{Rat}_4^{s}/\mathrm{Aut}(\mathbb{C}).$$
By Proposition \ref{even-Newton-stable-moduli}, we have
\begin{align*}
\partial\mathrm{nm}_4&=\partial\overline{\mathrm{NM}}_4\cap\mathrm{Rat}_4^{ss}//\mathrm{Aut}(\mathbb{C})\\
&=\{[N_{\{0,0,1,\infty\}}],[N_{\{0,0,1,1\}}],[N_{\{0,1,c,\infty\}}],[N_{\{0,0,1,c\}}]:c\in\mathbb{C}\setminus\{0,1\}\}.
\end{align*}
\end{example}
For $f\in\overline{\mathrm{NM}}_d\cap\mathrm{Rat}_d^{s}$, any representative $g$ for $[f]_{\mathrm{GIT}}$ is conjugate to $f$. However, this is not the case for $f\in\overline{\mathrm{NM}}_d\cap(\mathrm{Rat}_d^{ss}\setminus\mathrm{Rat}_d^s)$.
\begin{example} 
Among cubic Newton maps, let 
$$N([X:Y])=H_N(X,Y)\widehat N([X:Y])=N_{\{0,0,\infty\}}([X:Y])=XY[X:2Y]$$ 
and 
$$f([X:Y])=H_f(X,Y)\hat f([X:Y])=XY[X:2X+2Y].$$ 
Let $M_t([X:Y])=[tX:Y]$. Then we have, as $t\to 0$,
$$M_t^{-1}\circ f\circ M_t\to N.$$
Thus, $[f]_{\mathrm{GIT}}=[N]_{\mathrm{GIT}}\in\overline{\mathrm{rat}}_3$. Note $[N]_{\mathrm{GIT}}\in\overline{\mathrm{nm}}_3$. Hence $[f]_{\mathrm{GIT}}\in\overline{\mathrm{nm}}_3$. However, $f$ is not conjugate to any element in $\overline{\mathrm{NM}}_3\cap\mathrm{Rat}_3^{ss}$. Indeed, if $f$ was conjugate to some element in $\overline{\mathrm{NM}}_3\cap\mathrm{Rat}_3^{ss}$, then $f$ would be conjugate to $N$ since $\deg\hat f=1$ and up to conjugacy, $N$ is the unique element in $\overline{\mathrm{NM}}_3\cap\mathrm{Rat}_3^{ss}$ with two holes. Then there would exist $M\in\mathrm{PSL}_2(\mathbb{C})$ such that $M^{-1}\circ f\circ M=N$. So $M$ maps $\{0,\infty\}$ to $\{0,\infty\}$. Hence $M(z)$ has form $M(z)=kz$ or $M(z)=k/z$ for some $k\in\mathbb{C}\setminus\{0\}$. However, there is no $k\in\mathbb{C}\setminus\{0\}$ such that $M^{-1}\circ\hat f\circ M=\hat N$. It is a contradiction.
\end{example} 

\subsection{Iterate Maps on $\overline{\mathrm{NM}}_d$}
In this subsection, we show the iterate maps 
$$\Psi_n:\mathbb{P}^{2d+1}\dasharrow\mathbb{P}^{2d^n+1},$$ 
sending $f$ to $f^n$, preserve the (semi)stability of points in $\overline{\mathrm{NM}}_d$.\par  
Recall that for any $n\ge 2$, indeterminacy loci of $\Psi_n$ are $I(d)$ \cite[Lemma 2.2]{DeMarco05}. Then by Proposition \ref{Newton-Seim-Indeter}, the maps $\Psi_n$ are well-defined on $\overline{\mathrm{NM}}_d\cap\mathrm{Rat}_d^{ss}$.\par 
We first state the following results, which give criteria to determine the (semi)stability of a point in $\mathbb{P}^{2d+1}$. 
\begin{lemma}\cite[Lemma 4.2]{DeMarco07}\label{iterate-stable-imply-stable}
Suppose $f\in\mathbb{P}^{2d+1}\setminus I(d)$ and $f^n$ is stable for some $n>1$. Then $f$ is stable.
\end{lemma}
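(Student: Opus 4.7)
The plan is to prove the contrapositive via Silverman's explicit characterization (Proposition \ref{seimstable}): if $f\in\mathbb{P}^{2d+1}\setminus I(d)$ is not stable, then every iterate $f^n$ is not stable. Silverman's criterion expresses non-stability as a vanishing pattern for the coefficients of $f$ in suitable coordinates, and the key observation is that this pattern is inherited by all iterates.

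\textbf{Normalization.} By Proposition \ref{seimstable}, I choose $M\in\mathrm{PSL}_2(\mathbb{C})$ so that $g:=M^{-1}\circ f\circ M=[F_a:F_b]$ satisfies $a_i=0$ for $i>(d+1)/2$ and $b_i=0$ for $i>(d-1)/2$. Writing $\alpha_k:=\lceil(d^k-1)/2\rceil$ and $\beta_k:=\lceil(d^k+1)/2\rceil$, these conditions are equivalent to the divisibilities $Y^{\alpha_1}\mid F_a$ and $Y^{\beta_1}\mid F_b$, so that $F_a=Y^{\alpha_1}A$ and $F_b=Y^{\beta_1}B$ for homogeneous $A,B\in\mathbb{C}[X,Y]$.

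\textbf{Induction.} Since $f\notin I(d)$, the iterate $g^n=[F_a^{(n)}:F_b^{(n)}]$ is a well-defined point of $\mathbb{P}^{2d^n+1}$. I will show by induction on $n$ that
\[
Y^{\alpha_n}\mid F_a^{(n)}\quad\text{and}\quad Y^{\beta_n}\mid F_b^{(n)}.
\]
The inductive step is a direct exponent count in
\[
F_a^{(n+1)}=\sum_{i}a_i\bigl(F_a^{(n)}\bigr)^i\bigl(F_b^{(n)}\bigr)^{d-i};
\]
the inductive hypothesis makes each surviving term (indices $i\le\lfloor(d+1)/2\rfloor$) divisible by $Y^{i\alpha_n+(d-i)\beta_n}$, and a short calculation shows the minimum of this exponent over admissible $i$ equals $\alpha_{n+1}$. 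The identical argument applied to $F_b^{(n+1)}$, now with the stricter constraint $i\le\lfloor(d-1)/2\rfloor$, produces divisibility by $Y^{\beta_{n+1}}$.

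\textbf{Conclusion and obstacle.} The divisibilities established are precisely Silverman's criterion (applied with $M'=\mathrm{id}$) certifying $g^n\notin\mathrm{Rat}_{d^n}^s$; conjugating by $M$ then yields $f^n\notin\mathrm{Rat}_{d^n}^s$, completing the contrapositive. I expect the main fiddly point to be the parity-dependent exponent bookkeeping: one has $\alpha_k+\beta_k=d^k$ for odd $d$ and $\alpha_k+\beta_k=d^k+1$ for even $d$, so the clean route is to verify the minimum-exponent computation separately in the two cases, each of which is an elementary arithmetic check using that the constraint on $i$ forces $d-2i\ge-1$ (for $F_a^{(n+1)}$) or $d-2i\ge1$ (for $F_b^{(n+1)}$).
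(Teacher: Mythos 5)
The paper cites this lemma from \cite{DeMarco07} without reproducing a proof, so there is no in-paper argument to compare against; what matters is whether your argument is correct, and it is. Your route through Silverman's coefficient criterion (Proposition~\ref{seimstable}) is clean: after conjugating $f$ so that the unstable vanishing pattern is in standard position, the pattern is exactly the pair of divisibilities $Y^{\alpha_1}\mid F_a$, $Y^{\beta_1}\mid F_b$ with $\alpha_k=\lceil(d^k-1)/2\rceil$, $\beta_k=\lceil(d^k+1)/2\rceil$, and since $\beta_n-\alpha_n=1$ in all cases the exponent $i\alpha_n+(d-i)\beta_n=d\beta_n-i$ is minimized at the largest admissible $i$. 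The induction then closes. The only thing I'd tighten is the phrase ``the minimum of this exponent \dots\ equals $\alpha_{n+1}$'': for odd $d$ it does equal $\alpha_{n+1}$ (resp.\ $\beta_{n+1}$), but for even $d$ the minimum is $d^{n+1}/2+d/2$ (resp.\ $d^{n+1}/2+d/2+1$), which strictly exceeds the target; the divisibility conclusion still holds, of course, and you flag the parity split in your final paragraph, so this is a wording imprecision rather than a gap. One other small point worth stating explicitly: conjugation commutes with formal iteration on $\mathbb{P}^{2d^n+1}$, so $g^n=M^{-1}\circ f^n\circ M$, which is what lets Silverman's criterion (with witness $M$) transfer the vanishing pattern of $g^n$ back to non-stability of $f^n$. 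For context, when this paper proves the analogous statement restricted to Newton maps (Proposition~\ref{Newton-iterate-stable}) it works instead with DeMarco's equivalent hole-and-depth reformulation (Lemma~\ref{stable-hole}); your coefficient-divisibility version is the more elementary of the two and proves the fully general $\mathbb{P}^{2d+1}$ statement, at the cost of the parity bookkeeping, while the depth version is more geometric and combines better with the ``$\hat f(h)=h$'' condition that matters for Newton maps specifically.
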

Recall $\mu_f$ is the measure associated to $f\in\mathbb{P}^{2d+1}$.
\begin{lemma}\cite[Propositions 3.2 and 3.3]{DeMarco07}\label{semistable-measure}
Suppose $f\in\mathbb{P}^{2d+1}\setminus I(d)$. Then $f^n\in\mathrm{Rat}_{d^n}^{ss}$ for all $n\ge 1$ if and only if $\mu_{f}(\{z\})\le 1/2$ for all $z\in\mathbb{P}^1$.
\end{lemma}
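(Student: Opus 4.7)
The strategy is to relate the atomic mass $\mu_f(\{z\})$ to the depth of the iterate $f^n$ at $z$ via a composition formula for the hole polynomial. Writing $f=H_f\hat f\in\mathbb{P}^{2d+1}$ and computing $f^n=f\circ f^{n-1}$ in homogeneous coordinates inductively, the identity $F(H_{F^{n-1}}\hat F^{n-1}_1,H_{F^{n-1}}\hat F^{n-1}_2)=H_{F^{n-1}}^{d}\,H_f(\hat F^{n-1})\,\hat f(\hat F^{n-1})$ isolates the hole contribution at each level. Induction then gives
$$d_z(f^n)=\sum_{k=0}^{n-1}d^{n-1-k}\,d_{\hat f^k(z)}(f)\,\deg_z(\hat f^k),$$
where $\deg_z(\hat f^k)$ is the local degree. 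Dividing by $d^n$ and comparing with
$$\mu_f(\{z\})=\sum_{k=0}^{\infty}\frac{d_{\hat f^k(z)}(f)\,\deg_z(\hat f^k)}{d^{k+1}},$$
one sees that $d_z(f^n)/d^n$ is nondecreasing with limit $\mu_f(\{z\})$.

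For the implication $(\Leftarrow)$, assume $\mu_f(\{z\})\le 1/2$ for every $z$. Then $d_z(f^n)\le d^n/2$ for all $z$ and $n$, so by Lemma \ref{stable-hole} each hole of $f^n$ sits strictly below the semistable threshold whenever $d^n$ is odd. When $d^n$ is even, the only remaining possibility to rule out is equality $d_h(f^n)=d^n/2$ together with $\widehat{f^n}(h)=h$. Here I would invoke the invariance $\mu_f=\mu_{f^n}$ combined with a pullback argument: summing $\mu_{f^n}$-masses over $\widehat{f^n}$-preimages of $h$ (weighted by local degree) reproduces the total $\mu_{f^n}$-mass on that fiber, and a fixed atom of mass exactly $1/2$ would force a second atom of mass at least $1/2$ elsewhere unless $\deg\widehat{f^n}=1$, contradicting $\deg\widehat{f^n}\ge 2$ (which follows from $f\notin I(d)$ and $n\ge 1$).

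For the implication $(\Rightarrow)$, I argue by contrapositive. If $\mu_f(\{z\})>1/2$, then since the partial sums satisfy $d_z(f^n)/d^n\nearrow\mu_f(\{z\})$, for all sufficiently large $n$ we have $d_z(f^n)/d^n>1/2+\varepsilon$ for some fixed $\varepsilon>0$, so $d_z(f^n)-d^n/2\to\infty$. In particular $d_z(f^n)>(d^n+1)/2$ eventually, which directly violates the semistability criterion of Lemma \ref{stable-hole} for $f^n$.

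The main obstacle is the boundary case in the $(\Leftarrow)$ direction: ruling out an atom of mass exactly $1/2$ that happens to be a fixed point of some $\widehat{f^n}$. This requires the self-reproducing property of $\mu_f$ under $\hat f$-pullback (that the total mass is preserved when one sums over preimages counted with local degree), which is the substantive content of Propositions 3.2 and 3.3 in \cite{DeMarco07}; the depth identity and the limit $d_z(f^n)/d^n\to\mu_f(\{z\})$ then reduce the rest of the equivalence to elementary monotonicity.
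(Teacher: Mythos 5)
The paper cites this lemma from DeMarco and supplies no proof of its own, so I can only assess your proposal directly. Your two displayed identities are correct: the composition formula for the hole polynomial unrolls to your depth formula, and $d_z(f^n)/d^n$ is exactly the $n$-th partial sum of the series for $\mu_f(\{z\})$, hence nondecreasing with limit $\mu_f(\{z\})$. The contrapositive for $(\Rightarrow)$ then follows at once from Lemma \ref{stable-hole}, and that much is sound.

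The gap is in the boundary case of $(\Leftarrow)$. The claim that $\deg\widehat{f^n}\ge 2$ ``follows from $f\notin I(d)$'' is false: any degenerate $f=H_f\hat f$ with $\deg\hat f=1$ lies outside $I(d)$, yet $\deg\widehat{f^n}=1$ for every $n$; a constant $\hat f$ whose value is not a hole likewise avoids $I(d)$. The ``second atom'' step is also not a valid deduction as written. What actually closes the case is the telescoped form of the series,
$$\mu_f(\{z\})=\frac{1}{d^n}\Bigl[d_z(f^n)+\deg_z(\widehat{f^n})\,\mu_f\bigl(\{\widehat{f^n}(z)\}\bigr)\Bigr],$$
which is already implicit in your two formulas. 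If $d^n$ is even, $d_h(f^n)=d^n/2$, and $\widehat{f^n}(h)=h$, then $\mu_f(\{h\})\ge d_h(f^n)/d^n=1/2$, so the hypothesis forces $\mu_f(\{h\})=1/2$, and the identity becomes $\tfrac12=\tfrac12+\deg_h(\widehat{f^n})/(2d^n)$, hence $\deg_h(\widehat{f^n})=0$. That is impossible for a nonconstant $\widehat{f^n}$; and if $\widehat{f^n}\equiv h$ is constant, $f^n\notin I(d^n)$ forces $d_h(f^n)=0\neq d^n/2$. Either way one obtains a contradiction, with no lower bound on $\deg\widehat{f^n}$ and no second atom needed.
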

Now for Newton maps, we have
\begin{proposition}\label{Newton-iterate-stable}
Fix $d\ge 2$. Let $N=H_N\widehat N\in\overline{\mathrm{NM}}_d$. Then the following are equivalent:
\begin{enumerate}
\item $N\in\mathrm{Rat}_d^s$,
\item for any $n\ge 1$, $\Psi_n(N)\in\mathrm{Rat}_{d^n}^s$,
\item for some $n\ge 1$, $\Psi_n(N)\in\mathrm{Rat}_{d^n}^s$.
\end{enumerate}
\end{proposition}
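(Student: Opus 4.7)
The implication $(2) \Rightarrow (3)$ is immediate. For $(3) \Rightarrow (1)$, the stability of $\Psi_n(N)$ forces $\Psi_n$ to be defined at $N$, so $N \notin I(d)$; the case $n=1$ is tautological, and for $n \ge 2$ Lemma \ref{iterate-stable-imply-stable} yields $N \in \mathrm{Rat}_d^s$. So the real content is $(1) \Rightarrow (2)$.

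My plan for $(1) \Rightarrow (2)$ is to use the maximal measure $\mu_N$ as a bridge. Since $N \in \mathrm{Rat}_d^s \subset \mathrm{Rat}_d^{ss}$, Proposition \ref{Newton-Seim-Indeter} gives $N \notin I(d)$, so $\mu_{N^n} = \mu_N$ for every $n$. The core claim I will establish is the \emph{strict} atom bound
\[
\mu_N(\{z\}) < \tfrac{1}{2} \quad \text{for every } z \in \mathbb{P}^1.
\]
The verification exploits that each hole $h$ of $N$ is a fixed point of $\hat N$ with nonzero multiplier $1 - 1/(d_h(N)+1)$, hence $\hat N$ has local degree one at $h$. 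Summing the defining series for $\mu_N$ along the tower of preimages then gives $\mu_N(\{h\}) = d_h(N)/(d-1)$. For a non-hole preimage $z$ with $\hat N^{j_0}(z) = h$ (minimal $j_0 \ge 1$), the bound $\deg_z(\hat N^{j_0}) \le (\deg\hat N)^{j_0} \le d^{j_0}$ is exactly enough to offset the factor $d^{-(j_0+1)}$ in front of the geometric tail, yielding $\mu_N(\{z\}) \le d_h(N)/(d-1)$ as well. The depth bounds in Lemma \ref{Newton-hole} ($d_h \le d/2-1$ for even $d$, $d_h \le (d-3)/2$ for odd $d$) then force $d_h(N)/(d-1)$ strictly below $1/2$.

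Combining this with $\mu_{N^n} = \mu_N$ and Lemma \ref{semistable-measure}, the strict atom bound already yields $N^n \in \mathrm{Rat}_{d^n}^{ss}$ for all $n$. For even $d$, semistable equals stable and $(2)$ follows. For odd $d$, I rule out the strictly semistable case by contradiction: if $N^n \in \mathrm{Rat}_{d^n}^{ss} \setminus \mathrm{Rat}_{d^n}^s$, then Lemma \ref{stable-hole} furnishes a hole $h'$ of $N^n$ with either $d_{h'}(N^n) = (d^n+1)/2$, or $d_{h'}(N^n) = (d^n-1)/2$ and $\widehat{N^n}(h') = h'$. In the first case, the $k=0$ term of the defining series alone forces $\mu_{N^n}(\{h'\}) \ge (d^n+1)/(2 d^n) > 1/2$. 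In the second, summing the geometric series from the fixed-point preimage tower of $h'$ under $\widehat{N^n}$ yields $\mu_{N^n}(\{h'\}) \ge (d^n-1)/(2(d^n-1)) = 1/2$. Either conclusion contradicts $\mu_{N^n}(\{h'\}) = \mu_N(\{h'\}) < 1/2$.

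The main obstacle is the uniform strict bound at non-hole preimage atoms: the local-degree accumulation $\deg_z(\hat N^{j_0})$ along the orbit to a hole can grow, but it must be carefully balanced against the geometric decay $d^{-(j_0+1)}$ in the definition of $\mu_N$, and the telescoping relies on local degree one at the terminal hole. Once the strict bound is in hand, the rest is essentially bookkeeping with Lemmas \ref{semistable-measure} and \ref{stable-hole}.
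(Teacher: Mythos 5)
Your proof is correct, and it takes a genuinely different route from the paper for the core implication $(1)\Rightarrow(2)$. The paper bounds the depths $d_z(N^n)$ directly, by iterating the depth recursion for holes and their preimages and then invoking Lemma~\ref{stable-hole}; it handles the (semi)stable case in Proposition~\ref{Newton-iterate-semi} separately, and only \emph{there} does it switch to the measure $\mu_N$ and Lemma~\ref{semistable-measure}. You instead route the stable case through the measure as well: you show the strict atom bound $\mu_N(\{z\})<1/2$ (via the computation $\mu_N(\{h\})=d_h(N)/(d-1)$ at holes, using that $\hat N$ has local degree one at each hole, and the matching bound at non-hole preimages via $\deg_z(\hat N^{j_0})\le d^{j_0}$), deduce semistability of every $N^n$ from Lemma~\ref{semistable-measure} and $\mu_{N^n}=\mu_N$, and then eliminate the strictly semistable possibility for odd $d$ by computing $\mu_{N^n}(\{h'\})\ge 1/2$ in each of the two degenerate cases Lemma~\ref{stable-hole} allows. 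These measure computations all check out (the $k=0$ term alone gives $\ge(d^n+1)/(2d^n)>1/2$ when the depth is maximal, and the geometric sum over the fixed-hole preimage tower gives exactly $1/2$ when the depth is $(d^n-1)/2$ with $h'$ fixed). Your approach is arguably cleaner in that it treats even and odd $d$ uniformly until the last step, and it leans on the same arithmetic inputs (the depth bounds from Lemma~\ref{Newton-hole}) while outsourcing the recursion on iterate depths to the already-established measure formula; the paper's argument is more elementary but requires a separate, somewhat delicate depth estimate at both holes and non-hole preimages and must be rerun for odd $d$.
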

\begin{proof}
First we prove the case when $d$ is even. Similar argument works for odd $d$\par
$(1)\Rightarrow(2)$ If $N=H_N\widehat N\in\overline{\mathrm{NM}}_d\cap\mathrm{Rat}_d^s$, then $\Psi_n(N)$ is well-defined. Note each hole of $N$ is a fixed point of $\hat N$ with multiplicity $1$. If $z\in\mathbb{P}^1$ is a hole of $N$, by Lemma \ref{Newton-hole}, we have
$$d_z(N^n)\le d^{n-1}(\frac{d}{2}-1)+\sum_{k=1}^{n-1}(\frac{d}{2}-1)=(\frac{d}{2}-1)(d^{n-1}+n-1)\le\frac{d^n}{2}-1.$$
If $z\in\mathbb{P}^1$ is not a hole of $N$, we have 
\begin{align*}
d_z(N^n)&\le\sum_{k=1}^{n-1}(d-1)^k(\frac{d}{2}-1)\le\frac{d^n}{2}-1.
\end{align*}
Thus, by Lemma \ref{stable-hole}, the map $N^n$ is stable.\par 
$(3)\Rightarrow(1)$. It follows immediately from Lemma \ref{iterate-stable-imply-stable}.
\end{proof}
\begin{proposition}\label{Newton-iterate-semi}
Fix $d\ge 2$. Let $N=H_N\widehat N\in\overline{\mathrm{NM}}_d$. Then the following are equivalent:
\begin{enumerate}
\item $N\in\mathrm{Rat}_d^{ss}$,
\item for any $n\ge 1$, $\Psi_n(N)\in\mathrm{Rat}_{d^n}^{ss}$,
\item for some $n\ge 1$, $\Psi_n(N)\in\mathrm{Rat}_{d^n}^{ss}$.
\end{enumerate}
\end{proposition}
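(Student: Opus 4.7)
The proof will parallel that of Proposition \ref{Newton-iterate-stable}, with Lemma \ref{stable-hole}(1) replaced by its semistable counterpart Lemma \ref{stable-hole}(2). First I would reduce the even case: when $d$ is even, $\mathrm{Rat}_d^{ss}=\mathrm{Rat}_d^{s}$ and $\mathrm{Rat}_{d^n}^{ss}=\mathrm{Rat}_{d^n}^{s}$, so the equivalences collapse to those already proved in Proposition \ref{Newton-iterate-stable}. The rest of the argument treats odd $d$, and rests on a single compositional depth formula
$$d_h(N^n) \;=\; d_h(N)\cdot\frac{d^n-1}{d-1}$$
valid at each hole $h$ of $N$. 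Since $h$ is a fixed point of $\widehat N$ with multiplier $d_h(N)/(d_h(N)+1)$, hence local degree one, an inductive local expansion of the homogeneous components of $N^n$ at $[h:1]$, analogous to the one implicit in Proposition \ref{Newton-iterate-stable}, yields the recursion $d_h(N^n)=d\cdot d_h(N^{n-1})+d_h(N)$, which sums to the displayed formula.

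For $(1)\Rightarrow(2)$, I would assume $N\in\mathrm{Rat}_d^{ss}$, so that $\Psi_n(N)$ is well-defined by Proposition \ref{Newton-Seim-Indeter}. Lemma \ref{Newton-hole}(2) gives $d_h(N)\le(d-1)/2$ at each hole $h$, and the formula above then yields $d_h(N^n)\le(d^n-1)/2<(d^n+1)/2$, satisfying Lemma \ref{stable-hole}(2) at this fixed hole with the strict inequality required. Any remaining hole $z_0$ of $N^n$ is a strict $\widehat N$-preimage of some hole $h$ of $N$; an analogous local computation bounds $d_{z_0}(N^n)$ by $(d^n-1)/2$ as well, and since $\widehat{N^n}(z_0)\ne z_0$ the semistable condition at $z_0$ is automatic. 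Hence $\Psi_n(N)\in\mathrm{Rat}_{d^n}^{ss}$. The step $(2)\Rightarrow(3)$ is trivial.

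For $(3)\Rightarrow(1)$ I would argue by contrapositive. If $N\in I(d)$ then no iterate $\Psi_n(N)$ is defined, and (3) is vacuous. Otherwise assume $N\notin\mathrm{Rat}_d^{ss}$; Lemma \ref{Newton-hole}(2) then furnishes a hole $h$ with $d_h(N)\ge(d+1)/2$, whence
$$d_h(N^n)\;\ge\;\frac{(d+1)(d^n-1)}{2(d-1)}.$$
The elementary inequality $(d+1)(d^n-1)>(d-1)(d^n+1)$, equivalent to $d^n>d$, gives $d_h(N^n)>(d^n+1)/2$ for every $n\ge 2$; for $n=1$, the bound $d_h(N)\ge(d+1)/2$ at a fixed hole already violates Lemma \ref{stable-hole}(2). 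Hence $\Psi_n(N)\notin\mathrm{Rat}_{d^n}^{ss}$ for all $n\ge 1$, contradicting (3). The main obstacle is the verification of the depth formula; the remaining steps reduce to routine arithmetic once that identity is established.
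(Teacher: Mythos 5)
Your proof is correct in outline but takes a genuinely different route from the paper's. For the odd case, the paper proves $(1)\Rightarrow(2)$ by the maximal-measure criterion (Lemma \ref{semistable-measure}): it suffices to check $\mu_N(\{z\})\le 1/2$ for all $z$, which reduces to the one-line estimate $\mu_N(\{z\})\le\frac{d-1}{2}\sum_{k\ge 1}d^{-k}=\frac{1}{2}$. For $(3)\Rightarrow(1)$ the paper uses the depth-ratio inequality $d_z(N)/d\le d_z(N^n)/d^n\le(d^n+1)/(2d^n)$, which already yields $d_z(N)\le(d-1)/2$. You instead work the explicit depth identity $d_h(N^n)=d_h(N)\cdot\frac{d^n-1}{d-1}$ into both directions. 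Your route is more elementary and self-contained (it never invokes Lemma \ref{semistable-measure}), at the cost of requiring the depth formula and its bound at preperiodic holes to be established and verified; the paper outsources this bookkeeping to a deeper result.

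Two small points to tighten. First, the identity $d_h(N^n)=d_h(N)\frac{d^n-1}{d-1}$ requires $h$ to be a fixed point of $\widehat N$ with local degree one, which presupposes $\deg\widehat N\ge 1$. Proposition \ref{Newton-Seim-Indeter} guarantees this under hypothesis (1), so $(1)\Rightarrow(2)$ is safe, but in the contrapositive $(3)\Rightarrow(1)$ you assume $N\notin\mathrm{Rat}_d^{ss}$, and the space $\overline{\mathrm{NM}}_d$ does contain non-semistable maps with $\deg\widehat N=0$ (e.g.\ the limit of Newton maps as $d-1$ roots escape to $\infty$, giving $H_N=Y^d$ and $\widehat N$ constant). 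There the correct recursion gives $d_h(N^n)=d^{n-1}d_h(N)$, which still yields $d_h(N^n)>(d^n+1)/2$ for $n\ge 2$, but it is a separate case; the paper's ratio inequality $d_z(N)/d\le d_z(N^n)/d^n$ handles it uniformly and might be the cleaner device. Second, your claim that the bound $d_{z_0}(N^n)\le(d^n-1)/2$ at non-fixed holes $z_0$ follows by "an analogous local computation" needs the local degree of $\widehat N^k$ at $z_0$ to be controlled; the required estimate $(d-1)^k(d^{n-k}-1)<d^n-1$ does hold, but it is not immediate from the formula you record and should be spelled out. Also, "the semistable condition at $z_0$ is automatic" is misleading: Lemma \ref{stable-hole}(2) still imposes $d_{z_0}(N^n)\le(d^n+1)/2$ at non-fixed holes, so the bound is genuinely needed, merely with a weaker threshold than at fixed holes.
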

\begin{proof}
If $d$ is even, then $\mathrm{Rat}_d^s=\mathrm{Rat}_d^{ss}$. The equivalence of $(1)-(3)$ follows from Proposition \ref{Newton-iterate-stable}. Now we prove the equivalence when $d$ is odd.\par 
$(1)\Rightarrow(2)$ Consider the measure $\mu_N$. For $z\in\mathbb{P}^1$, we have 
$$\mu_N(\{z\})\le\frac{d-1}{2}\sum_{k=1}^\infty\frac{1}{d^k}=\frac{1}{2}.$$
Thus, by Lemma \ref{semistable-measure}, we know $N^n$ is semistable.\par 
$(3)\Rightarrow(1)$ Since $N^n$ is semistable, $d_z(N^n)\le(d^n+1)/2$ for all $z$. Therefore
$$\frac{d_z(N)}{d}\le\frac{d_z(N^n)}{d^n}\le\frac{d^n+1}{2d^n}.$$
So $d_z(N)\le(d-1)/2$. Thus $N$ is semistable.
\end{proof}
\begin{corollary}
Fix $d\ge 3$. The following are equivalent:
\begin{enumerate}
\item for any $N\in\partial\overline{\mathrm{NM}}_d\cap\mathrm{Rat}_d^{ss}$ and any $n\ge 2$, $\Psi_n(N)\not\in\mathrm{Rat}_{d^n}^s$;
\item $d=3$.
\end{enumerate}
\end{corollary}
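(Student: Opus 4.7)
The plan is to reduce this corollary entirely to Proposition \ref{Newton-iterate-stable} and Corollary \ref{cubic-stable-empty}. The point is that Proposition \ref{Newton-iterate-stable} says that for any $N\in\overline{\mathrm{NM}}_d$, stability is preserved under all iterates and detected by any single iterate. In particular, taking contrapositives, $N\notin\mathrm{Rat}_d^s$ is equivalent to $\Psi_n(N)\notin\mathrm{Rat}_{d^n}^s$ for every (equivalently, some) $n\ge 1$. Combined with the inclusion $\mathrm{Rat}_d^s\subset\mathrm{Rat}_d^{ss}$, this tells us that condition (1) of the corollary is equivalent to the statement
\begin{equation*}
\partial\overline{\mathrm{NM}}_d\cap\mathrm{Rat}_d^s=\emptyset,
\end{equation*}
and Corollary \ref{cubic-stable-empty} identifies the latter condition with $d=3$.

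For the direction $(2)\Rightarrow(1)$, I would assume $d=3$ and take any $N\in\partial\overline{\mathrm{NM}}_3\cap\mathrm{Rat}_3^{ss}$. By Corollary \ref{cubic-stable-empty} we have $N\notin\mathrm{Rat}_3^s$, so applying Proposition \ref{Newton-iterate-stable} (specifically, the contrapositive of the implication $(3)\Rightarrow(1)$ there) yields $\Psi_n(N)\notin\mathrm{Rat}_{3^n}^s$ for every $n\ge 2$.

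For the converse $(1)\Rightarrow(2)$, I would argue by contrapositive: if $d\neq 3$, then by Corollary \ref{cubic-stable-empty} the set $\partial\overline{\mathrm{NM}}_d\cap\mathrm{Rat}_d^s$ is nonempty, so pick $N$ in it. Since $\mathrm{Rat}_d^s\subset\mathrm{Rat}_d^{ss}$, such $N$ lies in $\partial\overline{\mathrm{NM}}_d\cap\mathrm{Rat}_d^{ss}$, and Proposition \ref{Newton-iterate-stable} gives $\Psi_n(N)\in\mathrm{Rat}_{d^n}^s$ for every $n\ge 2$, violating (1).

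There is no real obstacle here — the whole content is packaged into the two earlier results. The only minor point worth stating carefully is that in applying Proposition \ref{Newton-iterate-stable} we need $\Psi_n(N)$ to be well-defined, which follows from Proposition \ref{Newton-Seim-Indeter} since $N\in\overline{\mathrm{NM}}_d\cap\mathrm{Rat}_d^{ss}$ implies $N\notin I(d)$.
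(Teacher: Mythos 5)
Your proof is correct and takes essentially the same route as the paper: both reduce the equivalence to Proposition \ref{Newton-iterate-stable} together with the fact that $\partial\overline{\mathrm{NM}}_d\cap\mathrm{Rat}_d^s=\emptyset$ iff $d=3$. Two minor differences: in the $(1)\Rightarrow(2)$ direction you invoke Corollary \ref{cubic-stable-empty} directly, whereas the paper re-derives that fact on the spot (that $d$ must be odd and hence equal to $3$ because every boundary point has a hole of depth $(d-1)/2$); and in the $(2)\Rightarrow(1)$ direction the paper cites Proposition \ref{Newton-iterate-semi}, while your appeal to the contrapositive of $(3)\Rightarrow(1)$ in Proposition \ref{Newton-iterate-stable} is the more direct reference for a conclusion about stability rather than semistability.
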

\begin{proof}
$(1)\Rightarrow(2)$ We first claim $\partial\overline{\mathrm{NM}}_d\cap\mathrm{Rat}_d^{s}=\emptyset$. Otherwise, choose $N\in\partial\overline{\mathrm{NM}}_d\cap\mathrm{Rat}_d^{s}$. Then by Proposition \ref{Newton-iterate-stable}, we have $\Psi_n(N)\in\mathrm{Rat}_{d^n}^{s}$. It is a contradiction. Therefore, $\partial\overline{\mathrm{NM}}_d\cap\mathrm{Rat}_d^{s}=\emptyset$. Hence $d$ is odd. Moreover, for any $N\in\partial\overline{\mathrm{NM}}_d\cap\mathrm{Rat}_d^{ss}$, $N$ has a hole of depth $(d-1)/2$. So $d=3$.\par
$(2)\Rightarrow(1)$ It follows from Corollary \ref{cubic-stable-empty} and Proposition \ref{Newton-iterate-semi}.
\end{proof}

\subsection{Iterate Maps on $\overline{\mathrm{nm}}_d$}
In this subsection, we study the iterate maps on $\overline{\mathrm{nm}}_d$ and prove Theorem \ref{theorem-iterate-continuous-moduli}.\par
Consider the iterate maps
$$\Phi_n:\mathrm{rat}_d\to\mathrm{rat}_{d^n},$$
sending $[f]$ to $[f^n]$. Then the iterate maps $\Phi_n$ induce rational maps 
$$\Phi_n:\overline{\mathrm{rat}}_d\dasharrow\overline{\mathrm{rat}}_{d^n}.$$
However, the iterate maps $\Phi_n$ are not regular on $\overline{\mathrm{rat}}_d$ for any $d\ge 2$ and $n\ge 2$ \cite[Section 10]{DeMarco07}.\par 
If we focus on the moduli spaces $\overline{\mathrm{nm}}_d$, we have
\begin{theorem}\label{GIT-continuous-entension}
The iterate maps
$$\Phi_n|_{\mathrm{nm}_d}:\mathrm{nm}_d\to\mathrm{rat}_{d^n},$$
sending $[f]$ to $[f^n]$, extend naturally to continuous maps from $\overline{\mathrm{nm}}_d$ to $\overline{\mathrm{rat}}_{d^n}$ for any $d\ge 2$ and $n\ge 2$.
\end{theorem}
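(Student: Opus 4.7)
The plan is to build the extension at the level of representatives and then descend through the GIT quotient. By Proposition \ref{Newton-Seim-Indeter}, the set $\overline{\mathrm{NM}}_d\cap\mathrm{Rat}_d^{ss}$ is disjoint from the indeterminacy locus $I(d)$ of the iteration map $\Psi_n\colon\mathbb{P}^{2d+1}\dasharrow\mathbb{P}^{2d^n+1}$, so $\Psi_n$ restricts to a regular, $\mathrm{PSL}_2(\mathbb{C})$-equivariant morphism on this set. By Proposition \ref{Newton-iterate-semi}, its image lies in $\mathrm{Rat}_{d^n}^{ss}$. Composing with the GIT quotient $\mathrm{Rat}_{d^n}^{ss}\to\overline{\mathrm{rat}}_{d^n}$ yields a continuous, $\mathrm{PSL}_2(\mathbb{C})$-invariant map
$$
\widetilde{\Phi}_n\colon\overline{\mathrm{NM}}_d\cap\mathrm{Rat}_d^{ss}\longrightarrow\overline{\mathrm{rat}}_{d^n},\qquad N\longmapsto[N^n]_{\mathrm{GIT}},
$$
which clearly agrees with $\Phi_n$ over $\mathrm{NM}_d$.

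Second, I would verify that $\widetilde{\Phi}_n$ is constant on GIT equivalence classes in $\overline{\mathrm{NM}}_d\cap\mathrm{Rat}_d^{ss}$, which is what permits descent to $\overline{\mathrm{nm}}_d$. When $d$ is even, or when $N_1,N_2\in\overline{\mathrm{NM}}_d\cap\mathrm{Rat}_d^s$, GIT equivalence reduces to honest $\mathrm{PSL}_2(\mathbb{C})$-conjugacy (stable orbits are closed), and conjugacy passes to iterates. The remaining case is $d$ odd with $N_1,N_2\in\overline{\mathrm{NM}}_d\cap(\mathrm{Rat}_d^{ss}\setminus\mathrm{Rat}_d^s)$. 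Here Proposition \ref{semi-singleton} gives a universal accumulation point
$$
N_0=X^{(d-1)/2}Y^{(d-1)/2}[(d-1)X:(d+1)Y],
$$
and sequences $M_{i,k}\in\mathrm{PSL}_2(\mathbb{C})$ with $M_{i,k}^{-1}\circ N_i\circ M_{i,k}\to N_0$ in $\mathbb{P}^{2d+1}$. Since $\deg\widehat{N}_0=1$, we have $N_0\notin I(d)$, so $\Psi_n$ is continuous at $N_0$ and
$$
M_{i,k}^{-1}\circ N_i^n\circ M_{i,k}=(M_{i,k}^{-1}\circ N_i\circ M_{i,k})^n\longrightarrow N_0^n.
$$
Hence $[N_i^n]_{\mathrm{GIT}}=[N_0^n]_{\mathrm{GIT}}$ for $i=1,2$, giving well-definedness.

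Finally, I would conclude continuity of the descended map $\Phi_n\colon\overline{\mathrm{nm}}_d\to\overline{\mathrm{rat}}_{d^n}$ from the fact that the GIT quotient $\overline{\mathrm{NM}}_d\cap\mathrm{Rat}_d^{ss}\to\overline{\mathrm{nm}}_d$ is a topological quotient map (as a categorical quotient by a reductive group acting on the semistable locus), so any continuous invariant map from the source factors continuously through it. The main obstacle is the well-definedness step in the strictly semistable case: convergence in $\overline{\mathrm{nm}}_d$ carries no control on representatives, so one cannot simply use continuity of $\Psi_n$ on $\overline{\mathrm{NM}}_d\cap\mathrm{Rat}_d^{ss}$ directly. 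What makes the argument work is the rigidity provided by Proposition \ref{semi-singleton}, which collapses the entire strictly semistable boundary to a single GIT class whose distinguished representative $N_0$ lies outside $I(d)$; every other step is soft functoriality of the GIT construction.
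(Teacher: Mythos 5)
Your proposal follows the same three-step skeleton as the paper's proof — define the extension on semistable representatives via $\Psi_n$ using Proposition~\ref{Newton-Seim-Indeter} to avoid $I(d)$ and Proposition~\ref{Newton-iterate-semi} to stay semistable, verify well-definedness on GIT classes with Proposition~\ref{semi-singleton} carrying the strictly semistable case, and then descend to $\overline{\mathrm{nm}}_d$. Your treatment of well-definedness is in fact slightly more explicit than the paper's: the paper simply cites Proposition~\ref{semi-singleton}, whereas you spell out the conjugating sequences $M_{i,k}$ and observe that $N_0\notin I(d)$ implies $N_i$ and $N_0$ have GIT-equal $n$th iterates, which is a clean addition.

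The gap is in the final continuity step. You claim that $\pi_0\colon\overline{\mathrm{NM}}_d\cap\mathrm{Rat}_d^{ss}\to\overline{\mathrm{nm}}_d$ is a topological quotient map ``as a categorical quotient by a reductive group acting on the semistable locus,'' but that reasoning applies to the full GIT quotient $\mathrm{Rat}_d^{ss}\to\overline{\mathrm{rat}}_d$, not to its restriction to the subset $\overline{\mathrm{NM}}_d\cap\mathrm{Rat}_d^{ss}$. That subset is only $\mathrm{Aut}(\mathbb{C})$-invariant, not $\mathrm{PSL}_2(\mathbb{C})$-invariant, and $\mathrm{Aut}(\mathbb{C})$ is not reductive; more to the point, the subset is not saturated with respect to GIT equivalence. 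The paper itself exhibits this: for $d=3$, the map $f([X:Y])=XY[X:2X+2Y]$ is GIT-equivalent to a degenerate cubic Newton map yet is not conjugate to any element of $\overline{\mathrm{NM}}_3\cap\mathrm{Rat}_3^{ss}$, so the fiber of $\pi_0$ over the strictly semistable class is a proper subset of the GIT-equivalence class. A restriction of a quotient map to a non-saturated subset need not be a quotient map, so the universal property you invoke does not apply directly, and some additional argument is needed — for instance, showing that near any point of $\overline{\mathrm{nm}}_d$ one may choose representatives in $\overline{\mathrm{NM}}_d\cap\mathrm{Rat}_d^{ss}$ converging to a representative of the limit (the paper's sequential argument tacitly assumes exactly this), or showing $\pi_0$ is closed or open by some other means. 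Without that, the descent-of-continuity step is not justified.
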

\begin{proof}
We prove the case for odd $d$. Similar argument also works for $d$ even. 
Note by Proposition \ref{semi-singleton}, as sets, we have 
$$\overline{\mathrm{nm}}_d=\overline{\mathrm{NM}}_d\cap\mathrm{Rat}_d^s/\mathrm{Aut}(\mathbb{C})\cup\Big\{\big[X^{\frac{d-1}{2}}Y^{\frac{d-1}{2}}[(d-1)X:(d+1)Y]\big]_{\mathrm{GIT}}\Big\}.$$
To ease notations, we write 
$$\phi_d([X:Y])=X^\frac{d-1}{2}Y^\frac{d-1}{2}[(d-1)X:(d+1)Y].$$ 
Then 
$$\phi_d^n([X:Y])=X^\frac{d^n-1}{2}Y^\frac{d^n-1}{2}[(d-1)^nX:(d+1)^nY]\in\mathrm{Rat}_d^{ss}.$$ 
Define 
$$\Phi_{\overline{\mathrm{nm}}_d}([N]_{\mathrm{GIT}})=
\begin{cases}
[N^n]_{\mathrm{GIT}}\ \text{if}\ [N]_{\mathrm{GIT}}\in\overline{\mathrm{NM}}_d\cap\mathrm{Rat}_d^s/\mathrm{PSL}_2(\mathbb{C}),\\
[\phi_d^n]_{\mathrm{GIT}}\ \text{if}\ [N]_{\mathrm{GIT}}=[\phi_d]_\mathrm{GIT}.
\end{cases}$$\par
First $\Phi_n|_{\overline{\mathrm{nm}}_d}$ is well defined. Indeed, let $[f]_{\mathrm{GIT}}=[g]_{\mathrm{GIT}}\in\overline{\mathrm{nm}}_d$. Write $f=H_f\hat f$ and $g=H_g\hat g$. If $f, g\in\mathrm{Rat}_d^{s}$, then $g$ is conjugate to $f$. Without loss of generality, we assume $f\in\overline{\mathrm{NM}}_d\cap\mathrm{Rat}_d^s$. Then $\deg\hat f\ge 1$. Otherwise, $f([X:Y])=Y^d(1:0)\not\in\mathrm{Rat}_d^s$. So $\deg\hat g=\deg\hat f\ge 1$. Thus $f^n$ is conjugate to $g^n$. By Proposition \ref{Newton-iterate-stable}, we have $f^n\in\mathrm{Rat}_{d^n}^s$ and so is $g^n$. Hence $[f^n]_{\mathrm{GIT}}=[g^n]_{\mathrm{GIT}}\in\overline{\mathrm{rat}}_{d^n}$. If $f,g\in\mathrm{Rat}_d^{ss}\setminus\mathrm{Rat}_d^s$, by Proposition \ref{semi-singleton}, 
$$[f]_{\mathrm{GIT}}=[g]_{\mathrm{GIT}}=[\phi_d]_{\mathrm{GIT}}.$$ 
Then we have
$$\Phi_n|_{\overline{\mathrm{nm}}_d}([f]_{\mathrm{GIT}})=\Phi_n|_{\overline{\mathrm{nm}}_d}([g]_{\mathrm{GIT}})\in\overline{\mathrm{rat}}_{d^n}.$$
Thus, $\Phi_n|_{\overline{\mathrm{nm}}_d}$ defines a map from $\overline{\mathrm{\mathrm{nm}}}_d$ to $\overline{\mathrm{rat}}_{d^n}$.\par 
Now we show the map $\Phi_n|_{\overline{\mathrm{nm}}_d}$ is continuous. Let $\{[f_k]_{\mathrm{GIT}}\}$ be a sequence in $\overline{\mathrm{nm}}_d$ such that $[f_k]_{\mathrm{GIT}}\to[f]_{\mathrm{GIT}}\in\overline{\mathrm{nm}}_d$ as $k\to\infty$. Then there are representatives $N_k\in\overline{\mathrm{NM}}_d\cap\mathrm{Rat}_d^{ss}$ and $N\in\overline{\mathrm{NM}}_d\cap\mathrm{Rat}_d^{ss}$ for $[f_k]_{\mathrm{GIT}}$ and $[f]_{\mathrm{GIT}}$, respectively. By Proposition \ref{Newton-Seim-Indeter}, $N_k^n$ and $N^n$ are well defined. Moreover, by Proposition \ref{Newton-Seim-Indeter}, we have $N_k^n$ converges to $N^n$, as $k\to\infty$,. By Proposition \ref{Newton-iterate-semi}, we know $[N_k^n]_{\mathrm{GIT}}$ and $[N^n]_{\mathrm{GIT}}$ are in $\overline{\mathrm{rat}}_{d^n}$. Thus we have $[N_k^n]_{\mathrm{GIT}}$ converges to $[N^n]_{\mathrm{GIT}}$, as $k\to\infty$.\par 
If $N\in\overline{\mathrm{NM}}_d\cap\mathrm{Rat}_d^s$, then $\Phi_n|_{\overline{\mathrm{nm}}_d}$ is continuous at $[N]_{\mathrm{GIT}}$. Hence $\Phi_n|_{\overline{\mathrm{nm}}_d}$ is continuous at $[f]_{\mathrm{GIT}}$.\par 
If $N\in\overline{\mathrm{NM}}_d\cap(\mathrm{Rat}_d^{ss}\setminus\mathrm{Rat}_d^s)$, then 
$$[N]_{\mathrm{GIT}}=[\phi_d]_{\mathrm{GIT}}\in\overline{\mathrm{nm}}_d.$$ Thus there exists $M_t\in\mathrm{PSL}_2(\mathbb{C})$ such that as $t\to 0$, 
$$M_t^{-1}\circ N\circ M_t\to\phi_d.$$
Note $\phi_d\not\in I(d)$. Then we have 
$$M_t^{-1}\circ N^n\circ M_t\to\phi_d^n.$$
Note $\phi_d^n\in\mathrm{Rat}_{d^n}^{ss}$. Thus $[N^n]_{\mathrm{GIT}}=[\phi_d^n]_{\mathrm{GIT}}$ in $\overline{\mathrm{rat}}_d$. Thus, in this case, $\Phi_n|_{\overline{\mathrm{nm}}_d}$ is continuous at $[N]_{\mathrm{GIT}}$. Hence $\Phi_n|_{\overline{\mathrm{nm}}_d}$ is continuous at $[f]_{\mathrm{GIT}}$.
\end{proof}

\section{Applications}\label{applications}
In this section, we consider the inverse limits compactifications $\overleftarrow{\mathrm{nm}}_d$ and the maximal measures compactifications $\widetilde{\mathrm{nm}}_d$ for the moduli spaces $\mathrm{nm}_d$, and prove Theorem \ref{theorem-inverse-limit-compactify}.
\subsection{Inverse Limits Compactifications}
Recall that Theorem \ref{GIT-continuous-entension} states for any $d\ge 2$ and $n\ge 2$, the iterate maps 
$$\Phi_n|_{\overline{\mathrm{nm}}_d}:\overline{\mathrm{nm}}_d\to\overline{\mathrm{rat}}_{d^n},$$ 
sending $[f]_\mathrm{GIT}$ to $[f^n]_\mathrm{GIT}$, are continuous. Then 
$$\overline{\Phi_n(\mathrm{nm}_d)}=\Phi_n(\overline{\mathrm{nm}}_d).$$
Now consider the maps 
$$\vec{\Phi}_n:=(\mathrm{Id},\Phi_2,\cdots,\Phi_n):\mathrm{rat}_d\to\mathrm{rat}_d\times\mathrm{rat}_{d^2}\times\cdots\times\mathrm{rat}_{d^n}.$$
Note $\vec{\Phi}_n|_{\mathrm{nm}_d}$ extends to a continuous map on $\overline{\mathrm{nm}}_d$. Let $\Gamma_n$ be the closure of $\vec{\Psi}_n(\mathrm{nm}_d)$ in $\overline{\mathrm{rat}}_d\times\overline{\mathrm{rat}}_{d^2}\times\cdots\times\overline{\mathrm{rat}}_{d^n}$. Then 
$$\Gamma_n=\overline{\vec{\Phi}_n(\mathrm{nm}_d)}=\vec{\Phi}_n(\overline{\mathrm{nm}}_d).$$
There is a natural projection from $\Gamma_{n+1}$ to $\Gamma_n$. We take the inverse limit over $n$, 
$$\overleftarrow{\mathrm{nm}}_d:=\lim\limits_{\longleftarrow}\Gamma_n.$$
Then $\overleftarrow{\mathrm{nm}}_d$ is compact. For more details about the inverse limits, we refer \cite{Stone79}. By identifying $[N]\in\mathrm{nm}_d$ with $(([N]),([N],[N^2]),\cdots)\in\overleftarrow{\mathrm{nm}}_d$, the moduli space $\mathrm{nm}_d$ is a dense open subset of $\overleftarrow{\mathrm{nm}}_d$. Furthermore,
\begin{theorem}
The map 
$$\widehat\Phi:\overline{\mathrm{nm}}_d\to\overleftarrow{\mathrm{nm}}_d,$$
given by $[N]_\mathrm{GIT}\to(\vec{\Phi}_1([N]_\mathrm{GIT}),\vec{\Phi}_2([N]_\mathrm{GIT}),\cdots)$, is a well-defined homeomorphism.
\end{theorem}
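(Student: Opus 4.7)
The plan is to verify the four standard items (well-defined, continuous, bijective, homeomorphism) by leveraging Theorem \ref{GIT-continuous-entension} together with the fact that $\overline{\mathrm{nm}}_d$ is compact and $\overleftarrow{\mathrm{nm}}_d$ is Hausdorff (being a subspace of a product of the Hausdorff spaces $\overline{\mathrm{rat}}_{d^k}$).

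First I would check that $\widehat\Phi$ is well-defined. By Theorem \ref{GIT-continuous-entension}, each $\Phi_n|_{\overline{\mathrm{nm}}_d}$ extends continuously to $\overline{\mathrm{rat}}_{d^n}$, so $\vec{\Phi}_n([N]_{\mathrm{GIT}})\in\vec{\Phi}_n(\overline{\mathrm{nm}}_d)=\Gamma_n$ for each $n$. Compatibility of the sequence $(\vec{\Phi}_n([N]_{\mathrm{GIT}}))_{n\ge 1}$ under the natural projections $\Gamma_{n+1}\to\Gamma_n$ is immediate from the definition of $\vec{\Phi}_n$ as a truncation of $\vec{\Phi}_{n+1}$. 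Hence $\widehat\Phi([N]_{\mathrm{GIT}})$ really does lie in the inverse limit $\overleftarrow{\mathrm{nm}}_d$. Continuity of $\widehat\Phi$ then follows from the universal property of the inverse limit: $\overleftarrow{\mathrm{nm}}_d$ carries the topology induced from $\prod_n\overline{\mathrm{rat}}_{d^n}$, so a map into it is continuous iff each of its coordinate components is, and each $\vec{\Phi}_n|_{\overline{\mathrm{nm}}_d}$ is continuous by Theorem \ref{GIT-continuous-entension}.

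For injectivity I would simply observe that the first coordinate of $\widehat\Phi([N]_{\mathrm{GIT}})$ is $[N]_{\mathrm{GIT}}$ itself, so the projection $\overleftarrow{\mathrm{nm}}_d\to\overline{\mathrm{nm}}_d$ onto the first factor of $\Gamma_1=\overline{\mathrm{nm}}_d$ provides a continuous left inverse to $\widehat\Phi$. Surjectivity is where compactness comes in: the image $\widehat\Phi(\overline{\mathrm{nm}}_d)$ is compact, hence closed in the Hausdorff space $\overleftarrow{\mathrm{nm}}_d$, and by construction it contains the dense subset $\mathrm{nm}_d$ (identified with its embedded copy $([N],([N],[N^2]),\ldots)$), so the image must equal all of $\overleftarrow{\mathrm{nm}}_d$. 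Having a continuous bijection from the compact space $\overline{\mathrm{nm}}_d$ to the Hausdorff space $\overleftarrow{\mathrm{nm}}_d$ automatically gives a homeomorphism.

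The only step with any subtlety is verifying that the tautological embedding $\mathrm{nm}_d\hookrightarrow\overleftarrow{\mathrm{nm}}_d$ coincides with $\widehat\Phi|_{\mathrm{nm}_d}$, so that density of $\mathrm{nm}_d$ in $\overleftarrow{\mathrm{nm}}_d$ can be applied to $\widehat\Phi(\overline{\mathrm{nm}}_d)$; but this is by definition of both maps. The main obstacle, already dispatched in Theorem \ref{GIT-continuous-entension}, was ensuring the continuous extendability of each $\Phi_n$ to the GIT compactification; once this is in hand, the inverse-limit statement is essentially a soft compactness-and-density argument.
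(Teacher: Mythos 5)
Your proof is correct, and it is logically tighter than the paper's in a couple of places while taking a genuinely different route to Hausdorffness. Both arguments hinge on Theorem~\ref{GIT-continuous-entension} as the essential input. The paper simply asserts that $\widehat\Phi$ is bijective and continuous, then devotes the argument to showing $\overleftarrow{\mathrm{nm}}_d$ is Hausdorff, doing so by a parity split: for even $d$ it notes that the image lies in a product of the geometric quotients $\mathrm{rat}^s_{d^n}$, which are Hausdorff; for odd $d$ it invokes Proposition~\ref{semi-singleton} to view $\overleftarrow{\mathrm{nm}}_d$ as the one-point compactification of the (Hausdorff, locally compact) image of the stable locus. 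You instead obtain Hausdorffness at a stroke from the observation that $\overleftarrow{\mathrm{nm}}_d$ embeds in $\prod_n\Gamma_n\subset\prod_n\prod_{k\le n}\overline{\mathrm{rat}}_{d^k}$, a product of GIT quotients which are projective hence Hausdorff. Your route is shorter and avoids the parity case split, at the cost of explicitly invoking Hausdorffness of the categorical quotients $\overline{\mathrm{rat}}_{d^k}$ for odd exponents --- a true fact, but one the paper seems to have deliberately sidestepped by working only with the stable loci $\mathrm{rat}^s_{d^n}$ plus the one added semistable class. You also supply the injectivity argument (the first-coordinate projection is a continuous left inverse) and the surjectivity argument (compact image, dense $\mathrm{nm}_d$) that the paper leaves as assertions; these fill genuine small gaps in the written proof. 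One cosmetic remark: the inverse limit topology is more precisely the subspace topology from $\prod_n\Gamma_n$ rather than from $\prod_n\overline{\mathrm{rat}}_{d^n}$, but since each $\Gamma_n$ already sits in the finite product $\prod_{k\le n}\overline{\mathrm{rat}}_{d^k}$, the coordinatewise criterion for continuity you use is valid.
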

\begin{proof}
If $[f]_{\mathrm{GIT}}=[g]_{\mathrm{GIT}}\in\overline{\mathrm{nm}}_d$, then by Theorem \ref{GIT-continuous-entension}, we have $[f^n]_{\mathrm{GIT}}=[g^n]_{\mathrm{GIT}}\in\overline{\mathrm{rat}}_{d^n}$ for any $n\ge 2$. Thus $\vec{\Phi}_n([f]_\mathrm{GIT})$ is well defined. Therefore, $\widehat\Phi([f]_{\mathrm{GIT}})=\widehat\Phi([g]_{\mathrm{GIT}})$. So $\widehat\Phi$ is well-defined.\par
Note $\widehat\Phi$ is bijective and continuous. To show the map $\widehat\Phi$ is a homeomorphism, we only need to show $\overleftarrow{\mathrm{nm}}_d$ is Hausdorff. \par
Note 
$$\widehat\Phi(\overline{\mathrm{NM}}_d\cap\mathrm{Rat}_d^s/\mathrm{PSL}_2(\mathbb{C}))\subset\prod_{i=1}^\infty\prod_{n=1}^i\mathrm{rat}^{s}_{d^n}$$
Then $\widehat\Phi(\overline{\mathrm{NM}}_d\cap\mathrm{Rat}_d^s/\mathrm{PSL}_2(\mathbb{C}))$ is Hausdorff since $\mathrm{rat}^{s}_{d^n}$ is Hausdorff for any $n\ge 1$. So for even $d\ge 2$, the space $\overleftarrow{\mathrm{nm}}_d$ is Hausdorff.\par
If $d\ge 3$ is odd, by Proposition \ref{semi-singleton}, we have 
$$\overleftarrow{\mathrm{nm}}_d=\widehat\Phi(\overline{\mathrm{nm}}_d)=\widehat\Phi(\overline{\mathrm{NM}}_d\cap\mathrm{Rat}_d^s/\mathrm{PSL}_2(\mathbb{C}))\cup\{\widehat\Phi([X^{\frac{d-1}{2}}Y^{\frac{d-1}{2}}[(d-1)X:(d+1)Y]]_{\mathrm{GIT}})\}.$$ 
So $\overleftarrow{\mathrm{nm}}_d$ is the one-point compactification of $\widehat\Phi(\overline{\mathrm{NM}}_d\cap\mathrm{Rat}_d^s/\mathrm{PSL}_2(\mathbb{C}))$. Since the geometric quotient $\mathrm{rat}_d^s$ is Hausdorff and locally compact, $\widehat\Phi(\overline{\mathrm{NM}}_d\cap\mathrm{Rat}_d^s/\mathrm{PSL}_2(\mathbb{C}))$ is also Hausdorff and locally compact. Thus $\overleftarrow{\mathrm{nm}}_d$ is Hausdorff.
\end{proof}
\begin{corollary}
Every element of $\overleftarrow{\mathrm{nm}}_d$ is uniquely determined by the first entry.
\end{corollary}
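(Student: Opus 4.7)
The plan is to deduce the corollary almost immediately from the preceding theorem, which established that
$$\widehat\Phi:\overline{\mathrm{nm}}_d\to\overleftarrow{\mathrm{nm}}_d,\qquad [N]_{\mathrm{GIT}}\mapsto\bigl(\vec\Phi_1([N]_{\mathrm{GIT}}),\vec\Phi_2([N]_{\mathrm{GIT}}),\dots\bigr),$$
is a homeomorphism. The key observation is simply that $\vec\Phi_1=\mathrm{Id}$, so the first coordinate of $\widehat\Phi([N]_{\mathrm{GIT}})$ in the factor $\Gamma_1=\overline{\mathrm{nm}}_d$ is $[N]_{\mathrm{GIT}}$ itself. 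Let $\pi_1:\overleftarrow{\mathrm{nm}}_d\to\overline{\mathrm{nm}}_d$ denote the projection onto this first coordinate; then $\pi_1\circ\widehat\Phi=\mathrm{Id}_{\overline{\mathrm{nm}}_d}$.

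First I would note that every $x\in\overleftarrow{\mathrm{nm}}_d$ has a unique preimage $[N]_{\mathrm{GIT}}=\widehat\Phi^{-1}(x)\in\overline{\mathrm{nm}}_d$ by the bijectivity of $\widehat\Phi$. Then I would observe that $[N]_{\mathrm{GIT}}=\pi_1(\widehat\Phi([N]_{\mathrm{GIT}}))=\pi_1(x)$, so $x=\widehat\Phi(\pi_1(x))$. Consequently $\pi_1$ is an injection (and in fact a homeomorphism), which is exactly the assertion that the entire sequence $x$ is recovered from its first entry via the formula $x=\widehat\Phi(\pi_1(x))=(\pi_1(x),\Phi_2(\pi_1(x)),\Phi_3(\pi_1(x)),\dots)$. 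There is no real obstacle here; the work has already been done in proving that $\widehat\Phi$ is well defined and bijective, so the corollary is a purely formal consequence.
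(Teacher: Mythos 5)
Your proof is correct and is exactly the intended argument: the paper states this as an immediate corollary of the homeomorphism $\widehat\Phi$ without further proof, and your observation that $\pi_1\circ\widehat\Phi=\mathrm{Id}_{\overline{\mathrm{nm}}_d}$ together with bijectivity of $\widehat\Phi$ is the natural (and only) way to read it off.
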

\begin{corollary}
For the cubic Newton maps, $\overleftarrow{\mathrm{nm}}_3$ is homeomorphic to $\mathbb{P}^1$.
\end{corollary}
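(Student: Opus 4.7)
The plan is to simply assemble two results already established in the excerpt. The preceding theorem shows that the natural map
$$\widehat\Phi:\overline{\mathrm{nm}}_d\to\overleftarrow{\mathrm{nm}}_d$$
is a homeomorphism for every $d\ge 2$. In particular, specializing to $d=3$ gives $\overleftarrow{\mathrm{nm}}_3\cong\overline{\mathrm{nm}}_3$.

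Next, I would invoke the earlier corollary stating that $\overline{\mathrm{nm}}_3$ is homeomorphic to $\mathbb{P}^1$. Recall that this in turn came from Proposition \ref{semi-singleton}: for odd $d\ge 3$, the GIT compactification $\overline{\mathrm{nm}}_d$ is the one-point compactification of $\overline{\mathrm{NM}}_d\cap\mathrm{Rat}_d^s/\mathrm{Aut}(\mathbb{C})$, and for $d=3$ the stable part of the boundary is empty (Corollary \ref{cubic-stable-empty}), so $\overline{\mathrm{nm}}_3$ is the one-point compactification of $\mathrm{nm}_3\cong\mathbb{C}$, which is $\mathbb{P}^1$.

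Composing the two homeomorphisms yields the desired conclusion
$$\overleftarrow{\mathrm{nm}}_3\;\xrightarrow{\;\widehat\Phi^{-1}\;}\;\overline{\mathrm{nm}}_3\;\cong\;\mathbb{P}^1.$$
There is no real obstacle here; the corollary is a direct consequence of the two previously established facts, and no further work is needed beyond citing them.
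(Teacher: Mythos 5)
Your proof is correct and is exactly what the paper intends: the corollary is an immediate composition of the theorem that $\widehat\Phi:\overline{\mathrm{nm}}_d\to\overleftarrow{\mathrm{nm}}_d$ is a homeomorphism with the earlier corollary that $\overline{\mathrm{nm}}_3\cong\mathbb{P}^1$. Your extra unpacking of why $\overline{\mathrm{nm}}_3\cong\mathbb{P}^1$ (stable boundary empty, one-point compactification of $\mathrm{nm}_3\cong\mathbb{C}$) matches the paper's discussion preceding that corollary and is harmless to include.
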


\subsection{Maximal Measure Compactifications}
Recall the associated measure for $f\in\mathbb{P}^{2d+1}$ is
$$\mu_f=
\begin{cases}
\lim\limits_{n\to\infty}\frac{1}{d^n}\sum\limits_{\{f^n(z)=a\}}\delta_z, &\text{if}\  f\in\mathrm{Rat}_d\ \text{and}\  a\in\mathbb{P}^1\ \text{is\  nonexceptional},\\
\sum\limits_{n=0}^\infty\frac{1}{d^{n+1}}\sum\limits_{\{H_f(a)=0\}}\sum\limits_{\{\hat f^n(z)=a\}}\delta_z, &\text{if}\ f=H_f\hat f\in\mathbb{P}^{2d+1}\setminus\mathrm{Rat}_d\ \text{and}\ \deg\hat f>0,\\
\frac{1}{d}\sum\limits_{\{H_f(a)=0\}}\delta_a, &\text{if}\ f=H_f\hat f\in\mathbb{P}^{2d+1}\setminus\mathrm{Rat}_d\ \text{and}\ \deg\hat f=0.
\end{cases}$$\par
For $f=H_f\hat f\not\in I(d)$, we define the Fatou set $F(f)$ of $f$ as the largest open set on which iterates of $f$ form a normal family. The complement of $F(f)$ is called the Julia set of $f$ and denote by $J(f)$. Note if a subset $U\subset\mathbb{P}^1$ contains a hole of $f$, then $U$ intersects the Julia set $J(f)$. Thus for $f=H_f\hat f$ with $\text{deg}\ \hat f>0$, we have
$$J(f)=J(\hat f)\cup\overline{\bigcup_{n=0}^\infty\bigcup_{\{H_f(a)=0\}}\hat f^{-n}(a)}.$$
If $\text{deg}\ \hat f=0$, set $J(f)=\emptyset$.\par 
%Recall a point $z\in\mathbb{P}^1$ is exceptional for the map $\hat f$ if its grand orbit is finite.
\begin{lemma} \cite[Proposition 3.2]{DeMarco07}
Let $f=H_f\hat f\notin I(d)$ be degenerate. If at least one hole of $f$ is nonexceptional for $\hat f$, then $\mathrm{supp}\ \mu_f=J(f)$. If each hole of $f$ is exceptional for $\hat f$, then $\mathrm{supp}\ \mu_f$ is contained in the exceptional set of $\hat f$.
\end{lemma}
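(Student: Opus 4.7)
The plan is to identify $\mathrm{supp}\,\mu_f$ with the closure of the union of backward orbits of the holes of $f$ under $\hat f$, and then invoke the classical equidistribution theorem. Since $f\notin I(d)$ is degenerate with $\deg\hat f\ge 1$, the measure $\mu_f$ is a countable sum of Dirac masses with strictly positive weights supported on $\bigcup_{n\ge 0}\bigcup_{H_f(a)=0}\hat f^{-n}(a)$, so one immediately reads off
$$\mathrm{supp}\,\mu_f=\overline{\bigcup_{n\ge 0}\bigcup_{H_f(a)=0}\hat f^{-n}(a)}.$$
Since $J(f)=J(\hat f)\cup\overline{\bigcup_{n,a}\hat f^{-n}(a)}$, the inclusion $\mathrm{supp}\,\mu_f\subseteq J(f)$ is automatic from this description; the remaining task in each case is to decide whether $J(\hat f)$ is also contained in the support.

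For the first part, I would assume that some hole $h_0$ of $f$ is nonexceptional for $\hat f$ and invoke the classical equidistribution theorem of Brolin--Lyubich--Freire--Ma\~n\'e: the normalized counting measures $d^{-n}\sum_{\hat f^n(z)=h_0}\delta_z$ converge weakly to the unique measure of maximal entropy $\mu_{\hat f}$, whose support is $J(\hat f)$. It follows that $J(\hat f)\subseteq\overline{\bigcup_n\hat f^{-n}(h_0)}\subseteq\mathrm{supp}\,\mu_f$. Combining this with the inclusion of the remaining backward orbits and the set-theoretic upper bound above yields $\mathrm{supp}\,\mu_f=J(f)$.

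For the second part, I would use the backward invariance of the exceptional set $E(\hat f)$: if every hole $a$ of $f$ is exceptional for $\hat f$, then $a$ has finite grand orbit, so $\bigcup_n\hat f^{-n}(a)\subseteq E(\hat f)$ is finite. Taking the finite union over the finitely many holes, $\bigcup_{n,a}\hat f^{-n}(a)$ is a finite subset of $E(\hat f)$, hence already closed, and so $\mathrm{supp}\,\mu_f\subseteq E(\hat f)$, as claimed.

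The main technical input is the equidistribution theorem used in the first part; once this is invoked, everything else is set-theoretic bookkeeping built on the explicit definition of $\mu_f$ and the backward invariance of $E(\hat f)$. The only minor subtlety I would flag is the degree-one edge case $\deg\hat f=1$, in which $J(\hat f)=\emptyset$ and the first statement degenerates to the tautology $\mathrm{supp}\,\mu_f=\overline{\bigcup_{n,a}\hat f^{-n}(a)}=J(f)$, so no equidistribution input is needed there.
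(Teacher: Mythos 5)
Your argument is correct and is the standard proof of this fact; note that the paper itself does not supply a proof here but simply cites \cite[Proposition~3.2]{DeMarco07}, so there is no in-paper argument to compare against. Two small remarks. First, in the equidistribution step the normalizing factor should be $(\deg\hat f)^{-n}$, not $d^{-n}$: the backward iteration is by $\hat f$, whose degree is strictly less than $d$. This is cosmetic and does not affect the conclusion $J(\hat f)\subseteq\overline{\bigcup_n\hat f^{-n}(h_0)}$; and in fact you do not need the full Brolin--Lyubich--Freire--Ma\~n\'e theorem, since the elementary Montel-theorem fact that the backward orbit of any nonexceptional point accumulates on all of $J(\hat f)$ already suffices. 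Second, you tacitly restrict to $\deg\hat f\ge 1$. The case $\deg\hat f=0$ with $f\notin I(d)$ is harmless but should be dispatched: then $\hat f\equiv c$ with $H_f(c)\ne 0$, so $c$ is not a hole, every hole is exceptional for the constant map, and $\mathrm{supp}\,\mu_f$ is the finite set of holes, which lies in $\mathbb{P}^1\setminus\{c\}$; only the second alternative can occur and it holds trivially.
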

Let $M^1(\mathbb{P}^1)$ be the space of probability measures on $\mathbb{P}^1$ with the weak$-\ast$ topology. Identify $\mathbb{S}^2\subset\mathbb{R}^3$ as $\mathbb{P}^1$ via stereographic projection, and use the unit ball in $\mathbb{R}^3$ as a model for the hyperbolic three space $\mathbb{H}^3$. The Euclidean center of mass of a probability measure $\mu\in M^1(\mathbb{P}^1)$ on $\mathbb{S}^2$ is given by
$$E(\mu)=\int_{\mathbb{S}^2}z d\mu(z).$$
Given $\mu\in M^1(\mathbb{P}^1)$ such that $\mu(\{z\})<1/2$ for all $z\in\mathbb{P}^1$, the conformal barycenter $C(\mu)\in\mathbb{H}^3$ is uniquely determined by the following two properties \cite{Douady86}:\par
(1) $C(\mu)=0$ in $\mathbb{R}^3$ if and only if $E(\mu)=0$, and\par
(2) $C(A_\ast\mu)=A(C(\mu))$ for all $A\in\mathrm{PSL}_2(C)$.\\
The barycenter is a continuous function on the space of probability measures such that $\mu(\{z\})<1/2$ for all $z\in\mathbb{P}^1$, and it is undefined if $\mu$ has an atom of mass at least $1/2$. A measure $\mu$ is said to be the barycentered if $C(\mu)=0$.
\begin{lemma}\label{Newton-barycenter}
For $N=H_N\widehat N\in\overline{\mathrm{NM}}_d\cap\mathrm{Rat}^{s}_d$, the barycenter $C(\mu_N)$ is well-defined.
\end{lemma}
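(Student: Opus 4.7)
The plan is to show that $\mu_N$ has no atom of mass $\geq 1/2$, since by the Douady--Earle construction recalled just above the statement, this is exactly the condition under which the conformal barycenter $C(\mu_N) \in \mathbb{H}^3$ is uniquely defined.

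First, the hypothesis $N \in \overline{\mathrm{NM}}_d \cap \mathrm{Rat}_d^s \subseteq \overline{\mathrm{NM}}_d \cap \mathrm{Rat}_d^{ss}$ combined with Proposition \ref{Newton-Seim-Indeter} places $N$ outside the indeterminacy locus $I(d)$, so $\mu_N$ is a genuine probability measure on $\mathbb{P}^1$. Moreover Proposition \ref{Newton-iterate-stable} gives $N^k \in \mathrm{Rat}_{d^k}^s$ for every $k \geq 1$, each of which also avoids indeterminacy; so the identity $\mu_{N^k} = \mu_N$ holds for all $k$.

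The main step will be a direct computation of the atoms of $\mu_N$ from its defining series. Any hole $h$ of $N$ with $d_h(N) = m - 1 \geq 1$ corresponds to a root of the underlying polynomial of multiplicity $m \geq 2$, and is therefore an attracting but not superattracting fixed point of $\hat N$ of local degree one, with multiplier $(m-1)/m \neq 0$ (the hole $\infty$, when it occurs, is similarly a repelling fixed point of local degree one). Unraveling the self-preimage chain $\hat N^n(h) = h$ makes the contribution to $\mu_N(\{h\})$ a geometric series, yielding
\[
\mu_N(\{h\}) = \frac{d_h(N)}{d - 1},
\]
and the stability depth bounds from Lemma \ref{Newton-hole}, namely $d_h(N) \leq d/2 - 1$ for even $d$ and $d_h(N) \leq (d-3)/2$ for odd $d$, then force this value to be strictly less than $1/2$. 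For a point $z$ that is not itself a hole but whose $\hat N$-forward orbit first meets a hole $h$ at minimal step $N \geq 1$ with local multiplicity $\ell$, the same unwinding produces $\mu_N(\{z\}) = \ell \, d_h(N)/(d^N(d-1))$; since $\ell \leq \hat d^N$ with $\hat d < d$, this atom is strictly smaller than $\mu_N(\{h\})$, hence also strictly less than $1/2$. All remaining points carry no atom.

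The main obstacle will be the careful bookkeeping of local multiplicities along these iterated preimage chains, particularly at ramification of some $\hat N^n$, and keeping track of the case $h = \infty$. A cleaner alternative, bypassing the direct computation, is to invoke the strict companion of Lemma \ref{semistable-measure} implicit in the analysis of \cite[Propositions 3.2 and 3.3]{DeMarco07}: for $f \in \mathbb{P}^{2d+1} \setminus I(d)$, the condition $f^k \in \mathrm{Rat}_{d^k}^s$ for every $k \geq 1$ is equivalent to $\mu_f(\{z\}) < 1/2$ for all $z \in \mathbb{P}^1$; applied to $N$ and combined with Proposition \ref{Newton-iterate-stable} this delivers the required strict bound in a single line.
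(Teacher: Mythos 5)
Your main route -- compute the atoms of $\mu_N$ directly from the defining series and bound them using the depth bounds of Lemma~\ref{Newton-hole} -- is exactly the approach the paper takes, and your execution is if anything a bit more careful than the paper's: you identify the exact value $\mu_N(\{h\})=d_h(N)/(d-1)$ at a hole $h$ (using that each hole is a simple fixed point of $\widehat N$, so $\widehat N^n$ has local degree $1$ there) and then control the remaining atoms via $\ell\le\hat d^{\,k}<d^{\,k}$, whereas the paper simply asserts a term-by-term bound $\sum_{H_N(a)=0}\sum_{\widehat N^n(z)=a}\delta_z(\{w\})\le(d-2)/2$ and sums the geometric series. (One should also record, as the paper does, the trivial case $N\in\mathrm{NM}_d$: there $\mu_N$ is atomless and the barycenter is automatically defined.)

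A caution about the ``cleaner alternative'': the purported strict companion of Lemma~\ref{semistable-measure}, namely that $f^k\in\mathrm{Rat}_{d^k}^s$ for all $k$ is equivalent to $\mu_f(\{z\})<1/2$ for all $z$, is not true for general $f\in\mathbb{P}^{2d+1}\setminus I(d)$ and is not what DeMarco proves. Already for $d$ even, a map $f=H_f\hat f$ with a hole $h$ of depth $d/2$ and $\hat f(h)\ne h$ is stable by Lemma~\ref{stable-hole}, and one can arrange $f^k$ stable for all $k$ while the $n=0$ term of the series alone gives $\mu_f(\{h\})\ge (d/2)/d=1/2$. The equivalence only becomes a strict inequality when every hole of maximal depth is a fixed point of $\hat f$ -- which is exactly the situation for Newton maps, and exactly what the direct computation exploits. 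So the shortcut is not an off-the-shelf lemma; if you want to use it you would need to prove a Newton-specific version, at which point you are back to the direct estimate. Stick with your main route.
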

\begin{proof}
If $N\in\mathrm{NM}_d$, then $\mu_N$ is atomless and hence $C(\mu_N)$ is well-defined. \par
If $N=H_N\widehat N\in\partial\overline{\mathrm{NM}}_d\cap\mathrm{Rat}^{s}_d$, then by Lemma \ref{Newton-hole} the depth of each hole of $N$ is $\le d/2-1$ if $d$ is even, and $\le(d-3)/2$ if $d$ is odd. Note $\deg\hat N\ge 1$. Thus for any $w\in\mathbb{P}^1$, we have 
$$\mu_N(\{w\})=\sum\limits_{n=0}^\infty\frac{1}{d^{n+1}}\sum\limits_{\{H_f(a)=0\}}\sum\limits_{\{\hat f^n(z)=a\}}\delta_z(\{w\})\le\sum\limits_{n=0}^\infty\frac{1}{d^{n+1}}\frac{d-2}{2}<\frac{1}{2}.$$
Thus $C(\mu_N)$ is well-defined.
\end{proof}
\begin{remark}
If $N\in\overline{\mathrm{NM}}_d\cap(\mathrm{Rat}^{ss}_d\setminus\mathrm{Rat}_d^s)$, then $C(\mu_N)$ is undefined. Indeed, in this case, $f$ has a hole $h$ of depth $(d-1)/2$ and hence $\mu_N(\{h\})=1/2$.
\end{remark}
For $[f]\in\mathrm{rat}_d$, we can choose a normalized representative $f_{bc}\in\mathrm{Rat}_d$ such that $[f]=[f_{bc}]\in\mathrm{rat}_d$ and $C(\mu_{f_{bc}})=0$. Then $f_{bc}$ is called a conformal barycentered representative of $f$. Note $\mu_{f_{bc}}$ is unique up to conjugation by $\mathrm{SO}(3)\subset\mathrm{PSL}_2(\mathbb{C})$. For $f=H_f\hat f\in\overline{\mathrm{NM}}_d\cap\mathrm{Rat}_d^{s}$, by Lemma \ref{Newton-barycenter}, we can pick a conformal barycentered representative $f_{bc}\in\mathrm{Rat}^s_d$, i.e. $[f]=[f_{bc}]\in\mathrm{rat}^s_d$ and $C(\mu_{f_{bc}})=0$. \par
Let $M^1_{bc}$ be the space of barycentered probability measures on $\mathbb{P}^1$. Note the space $M^1_{bc}$ is not closed. Let $\overline{M^1_{bc}}$ be the closure of $M^1_{bc}$ in the space of all probability measures.  Consider the map 
$$\Theta:\mathrm{rat}_d\to M^1_{bc}/\mathrm{SO}(3),$$
given by $\Theta([f])=[\mu_{f_{bc}}]$, then $\Theta$ is continuous. Let $\widetilde{\mathrm{rat}}_d$ be the closure of the graph of $\Theta$, that is 
$$\widetilde{\mathrm{rat}}_d:=\overline{\mathrm{Graph}(\Theta)}\subset\overline{\mathrm{rat}}_d\times\overline{M^1_{bc}}/\mathrm{SO}(3).$$
Then $\widetilde{\mathrm{rat}}_d$ is compact and contains $\mathrm{rat}_d$ as a dense open subset.\par
Consider the restriction $\Theta|_{\mathrm{nm}_d}$ and the closure of the graph of $\Theta|_{\mathrm{nm}_d}$, 
$$\widetilde{\mathrm{nm}}_d:=\overline{\mathrm{Graph}(\Theta|_{\mathrm{\mathrm{nm}}_d})}\subset\overline{\mathrm{nm}}_d\times\overline{M^1_{bc}}/\mathrm{SO}(3).$$
Then $\widetilde{\mathrm{nm}}_d$ is compact and contains $\mathrm{nm}_d$ as a dense open subset. In fact, $\widetilde{\mathrm{nm}}_d$ is the closure of $\mathrm{nm}_d$ in $\widetilde{\mathrm{rat}}_d$.\par
To find out $\overline{\Theta(\mathrm{nm}_d)}\subset\overline{M^1_{bc}}/\mathrm{SO}(3)$, we first state the following known results.
\begin{lemma}\cite[Lemma 8.3]{DeMarco07}
If $\mu\in\overline{M^1_{bc}}$, then either $\mu$ is barycentered or 
$$\mu=\frac{1}{2}\delta_a+\frac{1}{2}\delta_{-1/\bar{a}}.$$
\end{lemma}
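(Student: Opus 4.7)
The plan is to show that if a weak limit $\mu = \lim \mu_n$ of barycentered probability measures fails to be barycentered itself, then the failure must come from mass concentrating into two antipodal atoms of mass exactly $1/2$. The key tool is not the conformal barycenter (which is not continuous at measures with a $1/2$-atom) but the Euclidean center of mass $E(\mu) = \int_{\mathbb{S}^2} z\, d\mu(z)$, which is weakly continuous because it is the integral of a bounded continuous vector-valued function. Since each $\mu_n \in M^1_{bc}$ satisfies $C(\mu_n) = 0$, property (1) of the conformal barycenter gives $E(\mu_n) = 0$, and hence $E(\mu) = 0$.

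I would then split into two cases. If $\mu(\{z\}) < 1/2$ for every $z \in \mathbb{P}^1$, the conformal barycenter $C(\mu)$ is defined, and property (1) combined with $E(\mu) = 0$ yields $C(\mu) = 0$, so $\mu$ is barycentered. Otherwise there is a point $a \in \mathbb{S}^2$ with $\alpha := \mu(\{a\}) \geq 1/2$. Write
\[
\mu = \alpha \delta_a + (1-\alpha)\nu,
\]
where $\nu$ is a probability measure on $\mathbb{S}^2$ (taking $\nu$ to be the normalized complement, which is well-defined since $\alpha < 1$: the case $\alpha = 1$ is ruled out by $E(\delta_a) = a \neq 0$). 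The constraint $E(\mu) = 0$ becomes $E(\nu) = -\tfrac{\alpha}{1-\alpha}\,a$.

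The main step is then a sharp inequality argument. Since $\nu$ is a probability measure supported on the unit sphere, $|E(\nu)| \leq \int |z|\, d\nu(z) = 1$, which forces $\alpha/(1-\alpha) \leq 1$; combined with $\alpha \geq 1/2$ this pins down $\alpha = 1/2$ exactly. Moreover equality $|E(\nu)| = 1$ holds, and by strict convexity of the unit ball in $\mathbb{R}^3$ this equality case forces $\nu$ to be a Dirac mass $\delta_b$ at a single unit vector $b$ with $b = -a$. Translating the antipode $-a \in \mathbb{S}^2$ back to $\mathbb{P}^1$ via stereographic projection gives $-1/\bar a$, yielding $\mu = \tfrac12\delta_a + \tfrac12\delta_{-1/\bar a}$.

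The only delicate point is the equality case $|E(\nu)| = 1 \Rightarrow \nu = \delta_b$; I expect this to be the main conceptual step, but it follows cleanly from the fact that the closed unit ball is strictly convex, so an average of unit vectors has norm $1$ only if the averaging is a point mass. Everything else is bookkeeping and the standard identification of the stereographic antipode with $z \mapsto -1/\bar z$.
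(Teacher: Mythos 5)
Your proof is correct. The paper itself gives no proof of this lemma and simply cites \cite[Lemma 8.3]{DeMarco07}; your argument reconstructs the standard proof from that source: weak continuity of the Euclidean barycenter $E$ gives $E(\mu)=0$, the dichotomy on whether $\mu$ has an atom of mass $\ge 1/2$ decides whether $C(\mu)$ is defined, and in the atomic case the norm bound $\lvert E(\nu)\rvert\le 1$ with its equality case (strict convexity of the ball, or equivalently the Cauchy--Schwarz equality condition applied to $\lvert E(\nu)\rvert^2=\int\langle z,E(\nu)\rangle\,d\nu$) forces $\alpha=1/2$ and $\nu=\delta_{-a}$, with the antipode identified as $-1/\bar a$ under stereographic projection.
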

\begin{lemma}\cite[Theorem 8.1]{DeMarco07}
\begin{enumerate}
\item $M^1_{bc}/\mathrm{SO}(3)$ is locally compact and Hausdorff.
\item $\overline{M^1_{bc}}/\mathrm{SO}(3)$ is the one-point compactification of $M^1_{bc}/\mathrm{SO}(3)$.
\end{enumerate}  
\end{lemma}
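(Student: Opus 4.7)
The plan is to use that everything lives inside the compact metrizable space $M^1(\mathbb{P}^1)$ (weak-$*$ topology), so that topological facts about the quotient can be read off from the structure of $\mathrm{SO}(3)$-orbits and from the explicit description of $\overline{M^1_{bc}}\setminus M^1_{bc}$ given by the preceding lemma.

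For part (1), first I would record that $M^1(\mathbb{P}^1)$ is compact Hausdorff and metrizable, and that the domain
$$D=\{\mu\in M^1(\mathbb{P}^1):\mu(\{z\})<1/2\text{ for all }z\in\mathbb{P}^1\}$$
is open in $M^1(\mathbb{P}^1)$ (the atom-size condition is an open condition). The conformal barycenter $C:D\to\mathbb{H}^3$ is continuous by the Douady--Earle construction cited above, so $M^1_{bc}=C^{-1}(0)$ is closed in $D$. Hence $M^1_{bc}$ is locally compact Hausdorff. Since $\mathrm{SO}(3)$ is a compact topological group acting continuously on this space, standard quotient theory gives that $M^1_{bc}/\mathrm{SO}(3)$ inherits both the Hausdorff and locally compact properties (orbits are compact, and saturation of compact neighborhoods by a compact group remains compact).

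For part (2), the key input is the preceding lemma: every $\mu\in\overline{M^1_{bc}}\setminus M^1_{bc}$ has the form $\tfrac{1}{2}\delta_a+\tfrac{1}{2}\delta_{-1/\bar a}$, i.e.\ a pair of equal atoms supported on an antipodal pair of $\mathbb{S}^2$. Since $\mathrm{SO}(3)$ acts transitively on antipodal pairs in $\mathbb{S}^2$, all such measures lie in a single $\mathrm{SO}(3)$-orbit and therefore project to a single point $\infty$ in the quotient. Meanwhile $\overline{M^1_{bc}}$ is closed in the compact space $M^1(\mathbb{P}^1)$, hence compact; the quotient $\overline{M^1_{bc}}/\mathrm{SO}(3)$ is then compact Hausdorff (again using compactness of $\mathrm{SO}(3)$), contains $M^1_{bc}/\mathrm{SO}(3)$ as an open dense subset, and has a single-point complement. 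By uniqueness of the one-point compactification of a locally compact Hausdorff space, the identification with the Alexandroff compactification follows.

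The main obstacle I expect is verifying that the quotient $\overline{M^1_{bc}}/\mathrm{SO}(3)$ is Hausdorff at the boundary point $\infty$, i.e.\ that one can separate $\infty$ from any $[\mu]$ with $\mu$ barycentered. Concretely, this requires producing, for each $\mu\in M^1_{bc}$, an $\mathrm{SO}(3)$-invariant open neighborhood $U$ of $\mu$ and an $\mathrm{SO}(3)$-invariant open neighborhood $V$ of the antipodal-pair orbit with $U\cap V=\emptyset$. The natural tool is to exhibit an $\mathrm{SO}(3)$-invariant continuous function $\overline{M^1_{bc}}\to[0,1]$, for instance built from $\sup_{z\in\mathbb{P}^1}\nu(\{z\})$, which equals $1/2$ exactly on the antipodal-pair locus and is strictly smaller on $M^1_{bc}$; this function separates $\infty$ from every barycentered class and simultaneously gives local compactness of $M^1_{bc}/\mathrm{SO}(3)$ as the open set where it is $<1/2$. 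Once this separating function is in hand, both statements reduce to standard facts about quotients by compact group actions.
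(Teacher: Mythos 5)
The paper does not prove this lemma; it is quoted verbatim from DeMarco \cite[Theorem~8.1]{DeMarco07}, so there is no in-paper argument to compare against. Your reconstruction is nonetheless sound. For part (1): the map $\mu\mapsto\sup_{z}\mu(\{z\})$ is upper semicontinuous on $M^1(\mathbb{P}^1)$ (if $\mu_n\to\mu$ and $\mu_n(\{z_n\})\ge c$ with $z_n\to z$, then $\mu(\overline{U})\ge\limsup\mu_n(\overline{U})\ge c$ for every closed neighborhood $\overline{U}$ of $z$, so $\mu(\{z\})\ge c$), hence $D$ is open; continuity of the Douady--Earle barycenter makes $M^1_{bc}=C^{-1}(0)$ closed in $D$, hence locally compact Hausdorff, and the quotient by a compact group action preserves both properties. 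For part (2): $M^1_{bc}=\overline{M^1_{bc}}\cap D$ is open in $\overline{M^1_{bc}}$ (the barycenter of a weak limit of barycentered measures is again $0$ whenever the limit has all atoms $<1/2$), dense by definition of closure, and the complement is a single $\mathrm{SO}(3)$-orbit by the preceding lemma; uniqueness of the Alexandroff compactification does the rest. The one place you overcomplicate: separating the boundary point from interior classes is not a genuine obstacle, because for a compact group acting continuously on a compact Hausdorff space the quotient is automatically compact Hausdorff (orbits are compact, the quotient map is closed), so no explicit invariant separating function is needed. That said, your candidate invariant $\nu\mapsto\sup_z\nu(\{z\})$ is correct and does give a concrete description of neighborhoods of $\infty$, which is a nice supplement rather than a necessary step.
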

\begin{lemma}\cite[Lemma 8.4]{DeMarco07}\label{Mea-Mod}
Suppose $\{\mu_k\}\subset M^1_{bc}$ such that $\mu_k\to\mu$ weakly, and $A_k$ is a sequence of round annuli such that
\begin{enumerate}
\item $\mathrm{Mod}(A_k)\to\infty$ as $k\to\infty$, and
\item there is a sequence $\epsilon_k\to 0$ such that $\mu_k(D_k)\ge 1/2-\epsilon$ for each of the complementary disks $D_k$ of $A_k$.
\end{enumerate}
Then $[\mu]=\infty$ in $\overline{M^1_{bc}}/\mathrm{SO}(3)$.
\end{lemma}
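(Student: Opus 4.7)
The plan is to reduce the statement to the assertion that the weak limit $\mu$ must carry an atom of mass at least $1/2$, which by Lemma \ref{Mea-Mod}'s predecessor (Lemma 8.3 in the excerpt) forces $\mu = \tfrac{1}{2}\delta_a + \tfrac{1}{2}\delta_{-1/\bar a}$ and hence $[\mu]=\infty$ in the one-point compactification $\overline{M^1_{bc}}/\mathrm{SO}(3)$. The whole argument rests on a geometric observation about round annuli on $\widehat{\mathbb{C}}$ and the Portmanteau theorem for weak convergence.

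The first step is the geometric reduction. For any round annulus $A\subset\widehat{\mathbb{C}}$ with complementary spherical disks of spherical diameters $d^-$ and $d^+$, there is a uniform estimate of the shape
\[
\min(d^-,d^+) \;\le\; C\,e^{-\pi\,\mathrm{Mod}(A)}.
\]
(One may verify this by applying an element of $\mathrm{SO}(3)$ to center the configuration and then computing directly from the normalized round annulus $\{r<|z|<R\}$.) Since $\mathrm{Mod}(A_k)\to\infty$, the estimate forces at least one complementary disk, say $D_k^-$, to have spherical diameter tending to $0$. Passing to a subsequence, $D_k^-$ then converges in the Hausdorff sense to a single point $a\in\widehat{\mathbb{C}}$.

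The second step is the measure-theoretic conclusion. For each $\delta>0$, one has $D_k^-\subset\overline{B_s(a,\delta)}$ for all $k$ sufficiently large, so
\[
\mu_k\bigl(\overline{B_s(a,\delta)}\bigr)\;\ge\;\mu_k(D_k^-)\;\ge\;\tfrac{1}{2}-\epsilon_k.
\]
Weak convergence $\mu_k\to\mu$ applied to the closed set $\overline{B_s(a,\delta)}$ gives
\[
\mu\bigl(\overline{B_s(a,\delta)}\bigr)\;\ge\;\limsup_{k\to\infty}\mu_k\bigl(\overline{B_s(a,\delta)}\bigr)\;\ge\;\tfrac{1}{2}.
\]
Letting $\delta\to 0$ yields $\mu(\{a\})\ge 1/2$. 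Combined with Lemma 8.3 from the excerpt, this forces $\mu$ to lie outside $M^1_{bc}$ (since its conformal barycenter is undefined) and to have the two-atom normal form; such measures are precisely what become identified with the single added point in the one-point compactification $\overline{M^1_{bc}}/\mathrm{SO}(3)$. Hence $[\mu]=\infty$ there.

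The main obstacle is the quantitative claim in the first step: that high modulus of a round annulus on the sphere forces one of its complementary disks to shrink spherically. Everything else is a straightforward application of Portmanteau and of the previously quoted description of $\overline{M^1_{bc}}$. Once that geometric lemma is in hand, the mass condition on the $D_k$ transfers cleanly through the weak limit to produce the required atom.
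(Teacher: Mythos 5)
This lemma is stated in the paper purely as a citation of \cite[Lemma 8.4]{DeMarco07}; the paper gives no proof of its own, so there is nothing internal to compare against. Judged on its own merits, your proposal reconstructs what is essentially the standard argument, and the Portmanteau step and the deduction via Lemma 5.3 (DeMarco's Lemma~8.3) and the one-point-compactification description are all correct and applied correctly. The one place you should be careful is the parenthetical justification of the geometric estimate $\min(d^-,d^+)\le C\,e^{-\pi\,\mathrm{Mod}(A)}$.

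You claim this can be verified by ``applying an element of $\mathrm{SO}(3)$ to center the configuration and then computing directly from the normalized round annulus $\{r<|z|<R\}$.'' That normalization is not available in general: a rotation of the sphere cannot make two disjoint round circles coaxial unless the hyperbolic common perpendicular of the corresponding geodesic planes already passes through the center of the ball model. (Count dimensions: ordered pairs of disjoint round disks form a $6$-dimensional family, $\mathrm{SO}(3)$ is $3$-dimensional, and modulus is $1$-dimensional, so modulus cannot parametrize the $\mathrm{SO}(3)$-orbits.) The inequality you want is still true, but the justification needs a different route. One correct version: suppose both complementary disks have spherical diameter $\ge\epsilon$. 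Rotate so that $D_1$ contains the spherical cap $\{|z|\ge R_0(\epsilon)\}$. Then $D_2$ is a Euclidean disk contained in $\{|z|<R_0\}$ whose spherical diameter $\ge\epsilon$ forces its Euclidean radius to be $\ge\epsilon/4$. Hence $A\subset\{|z|<R_0\}\setminus D_2$, and the modulus of that region is maximized when $D_2$ is centered at $0$ (a Gr\"otzsch-type symmetrization), giving $\mathrm{Mod}(A)\le\frac{1}{2\pi}\log\bigl(4R_0(\epsilon)/\epsilon\bigr)$. Contrapositively, $\mathrm{Mod}(A_k)\to\infty$ forces $\min(d^-_k,d^+_k)\to 0$, which is all you actually use in the measure-theoretic step. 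With that repair your argument is complete; you could also streamline it slightly by observing that an atom of mass $\ge 1/2$ in $\mu$ already places $\mu$ in $\overline{M^1_{bc}}\setminus M^1_{bc}$ without needing the explicit two-atom normal form from Lemma~5.3, though invoking it does no harm.
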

Then we can get 
\begin{proposition}\label{Max-Mea-Continuous}
Let $\theta=\Theta|_{\mathrm{nm}_d}$. Then $\theta$ extends to a continuous map 
$$\overline{\theta}:\overline{\mathrm{nm}}_d\to\overline{M^1_{bc}}/\mathrm{SO}(3).$$
\end{proposition}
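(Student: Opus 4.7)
The plan is to extend $\theta$ by the obvious formula on the GIT-stable part of $\overline{\mathrm{nm}}_d$ and, when $d$ is odd, to send the unique strictly semistable class to the point at infinity of $\overline{M^1_{bc}}/\mathrm{SO}(3)$. Concretely, for $[N]_{\mathrm{GIT}}\in\overline{\mathrm{NM}}_d\cap\mathrm{Rat}_d^s/\mathrm{Aut}(\mathbb{C})$ I would set $\overline{\theta}([N]_{\mathrm{GIT}}):=[\mu_{N_{bc}}]$, which is well-defined by Lemma~\ref{Newton-barycenter} together with the uniqueness of $N_{bc}$ up to $\mathrm{SO}(3)$-conjugation. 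For even $d$ this already exhausts $\overline{\mathrm{nm}}_d$. For odd $d$, Proposition~\ref{semi-singleton} exhibits $\overline{\mathrm{nm}}_d$ as the one-point compactification of this stable piece by the class $[\phi_d]_{\mathrm{GIT}}$ with $\phi_d=X^{(d-1)/2}Y^{(d-1)/2}[(d-1)X:(d+1)Y]$, and I would define $\overline{\theta}([\phi_d]_{\mathrm{GIT}}):=\infty$.

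Continuity at a stable boundary point is the routine half. Given $[N_k]_{\mathrm{GIT}}\to[N]_{\mathrm{GIT}}$ within the stable part, one can pass to barycentered representatives $f_k:=N_{k,bc}$ that converge in $\mathbb{P}^{2d+1}$ to a barycentered representative $N_{bc}$ of the limit, using that the geometric quotient on $\mathrm{Rat}_d^s$ is Hausdorff and that the $\mathrm{SO}(3)$-normalization varies continuously. By Proposition~\ref{Newton-Seim-Indeter}, $N_{bc}\notin I(d)$, so Proposition~\ref{measure-converge} gives $\mu_{f_k}\to\mu_{N_{bc}}$ weakly. The bound appearing in the proof of Lemma~\ref{Newton-barycenter} shows every atom of $\mu_{N_{bc}}$ has mass strictly less than $1/2$, so the conformal barycenter depends continuously on the measure in a neighborhood of $\mu_{N_{bc}}$, yielding $[\mu_{f_k}]\to[\mu_{N_{bc}}]$ in $M^1_{bc}/\mathrm{SO}(3)$.

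The main obstacle is continuity at $[\phi_d]_{\mathrm{GIT}}$ for odd $d$. Given $[N_k]_{\mathrm{GIT}}\to[\phi_d]_{\mathrm{GIT}}$ with each $[N_k]_{\mathrm{GIT}}$ stable, set $f_k:=N_{k,bc}$. Since $\phi_d$ has closed orbit in $\mathrm{Rat}_d^{ss}$ (its orbit can only escape via $I(d)$, which is excluded by Proposition~\ref{Newton-Seim-Indeter}), one can choose $M_k\in\mathrm{PSL}_2(\mathbb{C})$ with $M_k^{-1}\circ f_k\circ M_k\to\phi_d$ in $\mathbb{P}^{2d+1}$. A direct computation from the formula for the measure of a degenerate rational map gives $\mu_{\phi_d}=\tfrac{1}{2}(\delta_0+\delta_\infty)$, and since $\phi_d\notin I(d)$, Proposition~\ref{measure-converge} yields $(M_k^{-1})_\ast\mu_{f_k}\to\tfrac{1}{2}(\delta_0+\delta_\infty)$ weakly. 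Choose $\epsilon_k\to 0$ slowly enough that both $(M_k^{-1})_\ast\mu_{f_k}(\{|z|<\epsilon_k\})$ and $(M_k^{-1})_\ast\mu_{f_k}(\{|z|>1/\epsilon_k\})$ tend to $1/2$, and set $A_k:=M_k(\{\epsilon_k<|z|<1/\epsilon_k\})$. Since Möbius transformations send round annuli to round annuli and preserve the modulus, each $A_k$ is a round annulus with $\mathrm{Mod}(A_k)\to\infty$, and each complementary disk of $A_k$ carries $\mu_{f_k}$-mass tending to $1/2$. Lemma~\ref{Mea-Mod} then forces $[\mu_{f_k}]\to\infty$ in $\overline{M^1_{bc}}/\mathrm{SO}(3)$, matching $\overline{\theta}([\phi_d]_{\mathrm{GIT}})=\infty$. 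The delicate points will be selecting the $M_k$ so that the limit is $\phi_d$ itself (rather than a non-closed-orbit degeneration) and choosing $\epsilon_k$ slowly enough to realize both mass conditions simultaneously.
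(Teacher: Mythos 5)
Your proposal takes the same overall approach as the paper: extend $\theta$ on the stable part by $[N]_{\mathrm{GIT}}\mapsto[\mu_{N_{bc}}]$, send the unique strictly semistable class (odd $d$) to the point at infinity of $\overline{M^1_{bc}}/\mathrm{SO}(3)$ (which, by the cited Lemma 8.3, is exactly the class $[\tfrac12\delta_0+\tfrac12\delta_\infty]$ the paper uses), verify continuity on the stable part via Proposition~\ref{measure-converge}, and verify continuity at $[\phi_d]_{\mathrm{GIT}}$ via Lemma~\ref{Mea-Mod} applied to round annuli of exploding modulus. The one place you genuinely differ is how you produce those annuli. The paper never conjugates toward $\phi_d$: it takes a convergent subsequence of the barycentered representatives $f_{n_k}\to g\in\overline{\mathrm{NM}}_d\cap(\mathrm{Rat}_d^{ss}\setminus\mathrm{Rat}_d^{s})$, notes that $\mu_g$ has an atom of mass exactly $1/2$ at a deep hole $a$ of $g$, and uses the concentric round annuli $B(a,r_k)\setminus\overline{B}(a,r_k^2)$ directly. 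You instead first select $M_k$ with $M_k^{-1}\circ f_k\circ M_k\to\phi_d$ and pull back $\{\epsilon_k<|z|<1/\epsilon_k\}$ through $M_k$. That works, but it is heavier machinery: it requires that $\mathrm{Orbit}(\phi_d)$ be closed in $\mathrm{Rat}_d^{ss}$ plus a diagonal extraction of the $M_k$. Moreover, your justification for closedness --- that the orbit ``can only escape via $I(d)$'' --- is not correct as stated: an orbit of a strictly semistable point can fail to be closed by degenerating to a different orbit inside $\mathrm{Rat}_d^{ss}\setminus I(d)$, so avoiding $I(d)$ alone does not give closedness. For $\phi_d$ the orbit really is closed, but one should argue it differently (e.g.\ $\phi_d$ is fixed by the full scaling torus, so its orbit has minimal dimension in its GIT fiber and is therefore the unique closed orbit there). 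With that repaired, your argument is correct; the paper's route, which bypasses this altogether by working with an arbitrary accumulation point $g$ and its deep hole, is the simpler of the two.
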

\begin{proof}
Define $\overline{\theta}:\overline{\mathrm{nm}}_d\to\overline{M^1_{bc}}/\mathrm{SO}(3)$ by
$$\overline{\theta}([N])=
\begin{cases}
\theta([N]_{\mathrm{\mathrm{GIT}}}), &\text{if}\ [N]_{\mathrm{GIT}}\in\mathrm{nm}_d,\\
[\mu_{N_{bc}}], &\text{if}\ [N]_{\mathrm{GIT}}\in\partial\overline{\mathrm{NM}}_d\cap\mathrm{Rat}_d^{s}/\mathrm{PSL}_2(\mathbb{C}),\\
[\delta_0/2+\delta_\infty/2]&\text{if}\ [N]_{\mathrm{GIT}}\in\partial\overline{\mathrm{NM}}_d\cap(\mathrm{Rat}_d^{ss}\setminus\mathrm{\mathrm{Rat}}_d^{s})//\mathrm{PSL}_2(\mathbb{C}).
\end{cases}$$\par
We first show $\overline{\theta}$ is well-defined. It only needs to show if $[f]=[g]\in\partial\overline{\mathrm{NM}}_d\cap\mathrm{Rat}_d^{s}/\mathrm{PSL}_2(\mathbb{C})$, then $[\mu_{f_{bc}}]=[\mu_{g_{bc}}]\in\overline{M^1_{bc}}/\mathrm{SO}(3)$. Note $[f_{bc}]=[g_{bc}]\in\overline{\mathrm{rat}}_d$. Thus, $[\mu_{f_{bc}}]=[\mu_{g_{bc}}]\in\overline{M^1_{bc}}/\mathrm{SO}(3)$.\par
Now we show $\overline{\theta}$ is continuous. For $[f_n]_{\mathrm{GIT}}\in\overline{\mathrm{nm}}_d$, suppose $[f_n]_{\mathrm{GIT}}$ converges to $[f]_{\mathrm{GIT}}$ in $\overline{\mathrm{nm}}_d$. We have three cases.\par
(1) If $[f]_{\mathrm{GIT}}\in\mathrm{nm}_d$, it is done by the continuity of $\theta$.\par
(2) If $[f]_{\mathrm{GIT}}\in\partial\overline{\mathrm{NM}}_d\cap\mathrm{Rat}_d^{s}/\mathrm{PSL}_2(\mathbb{C})$, then 
$$[(f_n)_{bc}]\to[f_{bc}]\in\partial\overline{\mathrm{NM}}_d\cap\mathrm{Rat}_d^s/\mathrm{PSL}_2(\mathbb{C}).$$ 
So the accumulation points of $\{(f_n)_{bc}\}$ are conjugate to $f_{bc}$. Hence for a convergent subsequence $\{(f_{n_i})_{bc}\}$, there exists $M\in\mathrm{PSL}_2(\mathbb{C})$ such that $(f_{n_i})_{bc}$ converges to $M^{-1}\circ f_{bc}\circ M\not\in I(d)$. Then by Proposition \ref{measure-converge}, we know $\mu_{(f_{n_i})_{bc}}\to\mu_{M^{-1}\circ f_{bc}\circ M}$ weakly. So $[\mu_{(f_{n_i})_{bc}}]\to[\mu_{f_{bc}}]$. Hence $[\mu_{(f_{n})_{bc}}]\to[\mu_{f_{bc}}]$. Thus $[\mu_{(f_{n_i})_{bc}}]\to[\mu_{f_{bc}}]$ in this case.\par
(3) If $[f]_{\mathrm{GIT}}\in\partial\overline{\mathrm{NM}}_d\cap(\mathrm{Rat}_d^{ss}\setminus\mathrm{Rat}_d^{s})//\mathrm{PSL}_2(\mathbb{C})$, then $d$ is odd. By Lemma \ref{Newton-hole}, there exists a hole $h\in\mathbb{P}^1$ of $f=H_f\hat f$ such that $d_h(f)=(d-1)/2$ and $\hat f(h)=h$. Thus every accumulation point of $f_n$ has a hole of depth $(d-1)/2$. Suppose $g=H_{g}\hat g$ is an accumulation point of $f_n$ with $d_{a}(g)=(d-1)/2$ and $\hat g(a)=a$. Then 
$$\mu_{g}(\{a\})=\sum\limits_{n=0}^\infty\frac{1}{d^{n+1}}\sum\limits_{\{H_{g}(w)=0\}}\sum\limits_{\{\hat{g}^n(z)=w\}}\delta_z(\{a\})=\sum\limits_{n=0}^\infty\frac{1}{d^{n+1}}\frac{d-1}{2}=\frac{1}{2}.$$
Now for fixed $\epsilon>0$, choose $r=r(\epsilon)<1$ such that 
$$\mu_{g}(\mathbb{P}^1\setminus\overline{B}(a,r))\ge\frac{1}{2}-\frac{\epsilon}{2}.$$ 
Since $\mu_{f_{n_k}}\to\mu_{g}$ weakly as $k\to\infty$, there exists $K(\epsilon)>0$ such that for all $k\ge K(\epsilon)$,
$$\mu_{f_{n_k}}(\mathbb{P}^1\setminus\overline{B}(a,r))\ge\frac{1}{2}-\epsilon$$
and 
$$\mu_{f_{n_k}}(B(a,r^2))\ge\frac{1}{2}-\epsilon.$$  
For given $k$, let $\epsilon_k$ be the smallest $\epsilon$ such that $k\ge K(\epsilon)$ and let $r_k=r(\epsilon_k)$. Set $A'_{n_k}=B(a,r_k)\setminus\overline{B}(a,r_k^2)$, then $\mathrm{Mod}(A'_{n_k})\to\infty$ as $k\to\infty$. Let $A_{n_k}=M^{-1}(A_{n_k})$. Then $\mathrm{Mod}(A_{n_k})\to\infty$ as $k\to\infty$. Let $\mu$ be any subsequential limit of $\mu_{f_{n_k}}$. By Lemma \ref{Mea-Mod}, we have $[\mu]=[\delta_0/2+\delta_\infty/2]$. So in this case $\overline{\theta}([f_n]_{\mathrm{GIT}})\to[\delta_0/2+\delta_\infty/2]$. Therefore, $\overline{\theta}$ is continuous.
\end{proof}
\begin{corollary}\label{Closure-Max-Mea}
For $d\ge 2$,
$$\overline{\Theta}(\mathrm{nm}_d)={\Theta(\mathrm{nm}_d)}\cup\bigcup\limits_{f\in\partial\overline{\mathrm{NM}}_d\cap\mathrm{Rat}_d^s}\{[\mu_{f_{bc}}]\}\cup\{[\delta_0/2+\delta_\infty/2]\}.$$
\end{corollary}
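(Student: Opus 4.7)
The plan is to deduce the corollary directly from the explicit construction of the continuous extension in Proposition \ref{Max-Mea-Continuous}. First I would observe that $\overline{\mathrm{nm}}_d$ is compact and $\overline{\theta}$ is continuous, so the image $\overline{\theta}(\overline{\mathrm{nm}}_d)$ is compact, hence closed, in $\overline{M^1_{bc}}/\mathrm{SO}(3)$. Since it contains the dense subset $\theta(\mathrm{nm}_d)=\Theta(\mathrm{nm}_d)$, it must equal the closure $\overline{\Theta(\mathrm{nm}_d)}$ appearing on the left hand side.

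Next I would decompose $\overline{\mathrm{nm}}_d$ into three disjoint strata according to the (semi)stability trichotomy: the open stratum $\mathrm{nm}_d$, the stable boundary $\partial\overline{\mathrm{NM}}_d\cap\mathrm{Rat}_d^s/\mathrm{PSL}_2(\mathbb{C})$, and the strictly semistable boundary $\partial\overline{\mathrm{NM}}_d\cap(\mathrm{Rat}_d^{ss}\setminus\mathrm{Rat}_d^s)/\!/\mathrm{PSL}_2(\mathbb{C})$. By Proposition \ref{semi-singleton}, the latter is empty when $d$ is even and is the singleton $\{[\phi_d]_{\mathrm{GIT}}\}$ when $d$ is odd, where $\phi_d$ denotes the distinguished semistable model from that proposition.

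Finally I would read off the image of each stratum from the piecewise definition of $\overline{\theta}$ given in the proof of Proposition \ref{Max-Mea-Continuous}: the open stratum contributes $\Theta(\mathrm{nm}_d)$; each class $[N]_{\mathrm{GIT}}$ in the stable boundary contributes $[\mu_{N_{bc}}]$, which together gives $\bigcup_{f\in\partial\overline{\mathrm{NM}}_d\cap\mathrm{Rat}_d^s}\{[\mu_{f_{bc}}]\}$ (well defined by Lemma \ref{Newton-barycenter}); and the strictly semistable stratum contributes $[\delta_0/2+\delta_\infty/2]$. Taking the union of these three contributions produces exactly the claimed formula.

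The only mild obstacle is ensuring that no boundary stratum gives an "unexpected" image point and that the barycentered representatives used on the stable boundary are genuinely available, but both are transparent once one invokes the piecewise construction of $\overline{\theta}$ together with Lemma \ref{Newton-barycenter}. In short, the corollary is bookkeeping on top of Proposition \ref{Max-Mea-Continuous}.
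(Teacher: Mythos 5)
Your approach is correct and is precisely the bookkeeping the paper leaves implicit: identify $\overline{\Theta(\mathrm{nm}_d)}$ with $\overline{\theta}(\overline{\mathrm{nm}}_d)$ via compactness, continuity, and density, then read off the image of the three strata of $\overline{\mathrm{nm}}_d$ from the piecewise definition of $\overline{\theta}$ in Proposition \ref{Max-Mea-Continuous}. Two small remarks: the emptiness of the strictly semistable boundary stratum for even $d$ does not come from Proposition \ref{semi-singleton} (which is stated only for odd $d\ge 3$) but from the equality $\mathrm{Rat}_d^s=\mathrm{Rat}_d^{ss}$ for even $d$ recorded in Section \ref{background}; and, as your stratification makes explicit, when $d$ is even that stratum contributes nothing, so the term $\{[\delta_0/2+\delta_\infty/2]\}$ in the displayed formula should be read as appearing only when $d$ is odd.
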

\begin{corollary}\label{Max-Mea}
For $d\ge 2$,
\begin{align*}
\widetilde{\mathrm{nm}}_d=&\mathrm{Graph}(\Theta|_{\mathrm{nm}_d})\cup\bigcup\limits_{f\in\partial\overline{\mathrm{NM}}_d\cap\mathrm{Rat}_d^s}\{([f],[\mu_{f_{bc}}])\}\\
&\cup\{([X^{\frac{d-1}{2}}Y^{\frac{d-1}{2}}[(d-1)X:(d+1)Y]]_{\mathrm{GIT}},[\delta_0/2+\delta_\infty/2])\}.
\end{align*}
\end{corollary}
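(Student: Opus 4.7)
The plan is to deduce the corollary as a direct consequence of Proposition \ref{Max-Mea-Continuous}, via the standard fact that the closure of the graph of a continuous map on a dense subset equals the graph of its continuous extension. Since $\widetilde{\mathrm{nm}}_d$ is by definition the closure of $\mathrm{Graph}(\Theta|_{\mathrm{nm}_d})$ inside $\overline{\mathrm{nm}}_d\times\overline{M^1_{bc}}/\mathrm{SO}(3)$, the main task is to identify this closure with $\mathrm{Graph}(\overline{\theta})$, and then read off the three pieces from the case-by-case definition of $\overline{\theta}$.

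First I would invoke Proposition \ref{Max-Mea-Continuous} to obtain the continuous extension $\overline{\theta}:\overline{\mathrm{nm}}_d\to\overline{M^1_{bc}}/\mathrm{SO}(3)$ of $\theta=\Theta|_{\mathrm{nm}_d}$. The space $\overline{\mathrm{nm}}_d$ is compact as a closed subset of the compact GIT quotient $\overline{\mathrm{rat}}_d$, and $\overline{M^1_{bc}}/\mathrm{SO}(3)$ is Hausdorff because it is the one-point compactification of the locally compact Hausdorff space $M^1_{bc}/\mathrm{SO}(3)$. Consequently $\mathrm{Graph}(\overline{\theta})$ is closed in the product, contains $\mathrm{Graph}(\theta)$, and is contained in the closure of $\mathrm{Graph}(\theta)$ by density of $\mathrm{nm}_d$ in $\overline{\mathrm{nm}}_d$ together with continuity of $\overline{\theta}$. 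Therefore
$$\widetilde{\mathrm{nm}}_d=\overline{\mathrm{Graph}(\theta)}=\mathrm{Graph}(\overline{\theta}).$$

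Next I would simply split $\mathrm{Graph}(\overline{\theta})$ according to the three cases in the definition of $\overline{\theta}$ given in the proof of Proposition \ref{Max-Mea-Continuous}. Over $\mathrm{nm}_d$, the extension $\overline{\theta}$ agrees with $\theta$, yielding the piece $\mathrm{Graph}(\Theta|_{\mathrm{nm}_d})$. Over the stable boundary $\partial\overline{\mathrm{NM}}_d\cap\mathrm{Rat}_d^s/\mathrm{PSL}_2(\mathbb{C})$, the extension sends $[f]_{\mathrm{GIT}}$ to $[\mu_{f_{bc}}]$, accounting for the middle union $\bigcup_{f\in\partial\overline{\mathrm{NM}}_d\cap\mathrm{Rat}_d^s}\{([f],[\mu_{f_{bc}}])\}$. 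Finally, by Proposition \ref{semi-singleton} the strictly semistable stratum of $\partial\overline{\mathrm{NM}}_d$ is, when it is nonempty, the singleton class $[X^{(d-1)/2}Y^{(d-1)/2}[(d-1)X:(d+1)Y]]_{\mathrm{GIT}}$, on which $\overline{\theta}$ takes the value $[\delta_0/2+\delta_\infty/2]$, giving the last term.

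There is no substantive obstacle beyond what has already been done in Proposition \ref{Max-Mea-Continuous}; the corollary is essentially a bookkeeping restatement of the continuous extension combined with the explicit description of the GIT boundary of $\overline{\mathrm{nm}}_d$ afforded by Proposition \ref{semi-singleton}. The only mildly delicate point to be checked is Hausdorffness of the target $\overline{M^1_{bc}}/\mathrm{SO}(3)$, which is recorded in the statement from \cite{DeMarco07} cited just before Proposition \ref{Max-Mea-Continuous}.
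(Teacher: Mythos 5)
Your proof is correct and reflects exactly the argument the paper intends: the identity $\overline{\mathrm{Graph}(\theta)}=\mathrm{Graph}(\overline{\theta})$ (which the paper states explicitly immediately after the corollary) combined with the three-case description of $\overline{\theta}$ from Proposition \ref{Max-Mea-Continuous} and the singleton identification from Proposition \ref{semi-singleton}. You also correctly flag that for even $d$ the strictly semistable stratum is empty, a point the paper's statement glosses over.
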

Proposition \ref{Max-Mea-Continuous} implies $\overline{\mathrm{Graph}(\Theta|_{\mathrm{nm}_d})}=\mathrm{Graph}(\overline{\Theta|_{\mathrm{nm}_d}})$. Thus we have
\begin{theorem}\label{Max-Mea-GIT}
The maximal measure compactifiction $\widetilde{\mathrm{nm}_d}$ is homeomorphic to the GIT compactification $\overline{\mathrm{nm}}_d$.
\end{theorem}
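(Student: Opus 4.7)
The plan is to exploit Proposition \ref{Max-Mea-Continuous}: since $\theta := \Theta|_{\mathrm{nm}_d}$ has a continuous extension $\overline{\theta}:\overline{\mathrm{nm}}_d\to\overline{M^1_{bc}}/\mathrm{SO}(3)$, the standard graph-of-continuous-function argument should identify $\widetilde{\mathrm{nm}}_d$ with $\overline{\mathrm{nm}}_d$ via the first projection. The only substantive work is checking that taking the closure of the graph of $\theta$ inside $\overline{\mathrm{nm}}_d\times\overline{M^1_{bc}}/\mathrm{SO}(3)$ yields exactly the graph of the extension $\overline{\theta}$.

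First I would verify the identity $\overline{\mathrm{Graph}(\theta)}=\mathrm{Graph}(\overline{\theta})$ flagged just before the statement. The inclusion $\overline{\mathrm{Graph}(\theta)}\subset\mathrm{Graph}(\overline{\theta})$ follows from continuity of $\overline{\theta}$: if $(x_n,\theta(x_n))\to(x,y)$ in the product, then $x\in\overline{\mathrm{nm}}_d$ and by continuity $y=\lim\overline{\theta}(x_n)=\overline{\theta}(x)$. For the reverse inclusion, density of $\mathrm{nm}_d$ in $\overline{\mathrm{nm}}_d$ provides, for every $x\in\overline{\mathrm{nm}}_d$, a sequence $x_n\in\mathrm{nm}_d$ with $x_n\to x$; continuity then yields $(x_n,\theta(x_n))\to(x,\overline{\theta}(x))$, placing the point $(x,\overline{\theta}(x))$ in $\overline{\mathrm{Graph}(\theta)}$. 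This needs $\overline{M^1_{bc}}/\mathrm{SO}(3)$ to be Hausdorff so that limits are unique, which follows from the one-point compactification description recalled in the text. Thus
\[
\widetilde{\mathrm{nm}}_d \;=\; \overline{\mathrm{Graph}(\theta)} \;=\; \mathrm{Graph}(\overline{\theta}) \;=\; \{(x,\overline{\theta}(x)) : x\in\overline{\mathrm{nm}}_d\}.
\]

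Next I would consider the first-coordinate projection
\[
\pi_1:\widetilde{\mathrm{nm}}_d\to\overline{\mathrm{nm}}_d,\qquad (x,\overline{\theta}(x))\mapsto x.
\]
It is continuous as the restriction of the product projection, and the graph description above shows it is a bijection. Its inverse is $x\mapsto(x,\overline{\theta}(x))$, which is continuous since each coordinate is. Alternatively, $\widetilde{\mathrm{nm}}_d$ is compact (a closed subset of the compact product $\overline{\mathrm{nm}}_d\times\overline{M^1_{bc}}/\mathrm{SO}(3)$) and $\overline{\mathrm{nm}}_d$ is Hausdorff (it is a closed subspace of the Hausdorff space $\overline{\mathrm{rat}}_d$), so a continuous bijection between them is automatically a homeomorphism.

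The main obstacle, and the reason this argument is not merely formal, is the honest well-definedness and continuity of the extension $\overline{\theta}$, which was already handled in Proposition \ref{Max-Mea-Continuous} (in particular, the delicate case where the limiting Newton map has a hole of depth $(d-1)/2$, collapsing the barycentered measure to $[\delta_0/2+\delta_\infty/2]$ via the annulus modulus estimate of Lemma \ref{Mea-Mod}). Given that proposition in hand, the present theorem reduces to the soft topological facts above, and the homeomorphism $\pi_1$ serves as the canonical identification between $\widetilde{\mathrm{nm}}_d$ and $\overline{\mathrm{nm}}_d$.
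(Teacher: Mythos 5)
Your proposal is correct and follows the same route as the paper: the paper's proof of this theorem consists precisely of the observation that Proposition \ref{Max-Mea-Continuous} gives $\overline{\mathrm{Graph}(\Theta|_{\mathrm{nm}_d})}=\mathrm{Graph}(\overline{\Theta|_{\mathrm{nm}_d}})$, after which the first-coordinate projection is the canonical homeomorphism. You have simply written out the soft topological checks (both inclusions of the graph identity, Hausdorffness, compact-to-Hausdorff bijection) that the paper leaves implicit.
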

\begin{corollary}
The maximal measure compactifiction $\widetilde{\mathrm{nm}}_d$ is homeomorphic to the inverse limit compactification $\overleftarrow{\mathrm{nm}}_d$.
\end{corollary}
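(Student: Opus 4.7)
The plan is to obtain this corollary as an immediate consequence of the two homeomorphism results already established, by composing them. Specifically, Theorem \ref{Max-Mea-GIT} provides a canonical homeomorphism $\widetilde{\mathrm{nm}}_d \cong \overline{\mathrm{nm}}_d$, and the theorem preceding it (the statement that $\widehat{\Phi}: \overline{\mathrm{nm}}_d \to \overleftarrow{\mathrm{nm}}_d$ is a well-defined homeomorphism) gives the companion identification $\overline{\mathrm{nm}}_d \cong \overleftarrow{\mathrm{nm}}_d$. So the strategy is simply to exhibit the composition
\[
\widetilde{\mathrm{nm}}_d \xrightarrow{\cong} \overline{\mathrm{nm}}_d \xrightarrow{\widehat{\Phi}} \overleftarrow{\mathrm{nm}}_d
\]
and observe that it is a homeomorphism as a composition of two homeomorphisms between compact Hausdorff spaces.

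To make this canonical, I would describe the composed map directly on the dense open subset $\mathrm{nm}_d \subset \widetilde{\mathrm{nm}}_d$: an element of the graph of $\Theta|_{\mathrm{nm}_d}$ of the form $([N]_{\mathrm{GIT}}, [\mu_{N_{bc}}])$ is sent to the sequence $(\vec{\Phi}_1([N]_{\mathrm{GIT}}), \vec{\Phi}_2([N]_{\mathrm{GIT}}), \dots) \in \overleftarrow{\mathrm{nm}}_d$, i.e.\ forgetting the measure coordinate and then iterating. This assignment is continuous on $\mathrm{nm}_d$ by Theorem \ref{GIT-continuous-entension}, and because both $\widetilde{\mathrm{nm}}_d$ and $\overleftarrow{\mathrm{nm}}_d$ are the closures of (essentially the same copy of) $\mathrm{nm}_d$ in compact Hausdorff ambient spaces, the explicit descriptions of the boundary points from Corollary \ref{Max-Mea} and from the analysis in the inverse-limit section match up, so the composition extends continuously to the desired homeomorphism.

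There is essentially no obstacle: since the two theorems being composed have already been proved in the excerpt, the only thing to check is that the composition is well-defined and respects the canonical identifications. The corollary is a genuine formal consequence, and my write-up would be only a few lines invoking Theorem \ref{Max-Mea-GIT} together with the inverse-limit homeomorphism.
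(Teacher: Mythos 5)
Your proposal is correct and matches the paper's (implicit) reasoning exactly: the corollary is stated immediately after Theorem \ref{Max-Mea-GIT} precisely because it follows by composing that homeomorphism $\widetilde{\mathrm{nm}}_d \cong \overline{\mathrm{nm}}_d$ with the earlier homeomorphism $\widehat{\Phi}:\overline{\mathrm{nm}}_d \to \overleftarrow{\mathrm{nm}}_d$. The paper offers no separate proof, so your few-line composition argument is the intended one.
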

\begin{corollary}
For $d=3$,
\begin{enumerate}
\item $\overline{\Theta}(\overline{\mathrm{nm}}_3)={\Theta}(\mathrm{nm}_3)\cup\{[\delta_0/2+\delta_\infty/2]\}$.
\item $\widetilde{\mathrm{nm}}_3=\mathrm{Graph}(\Theta|_{\mathrm{nm}_3})\cup\{([XY[X:2Y]]_{\mathrm{GIT}},[\delta_0/2+\delta_\infty/2])\}$.
\item $\widetilde{\mathrm{nm}}_3$ is homeomorphic to $\mathbb{P}^1$.
\end{enumerate}
\end{corollary}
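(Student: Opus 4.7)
The three parts are all direct specializations of results already established in the excerpt, so the plan is mostly a matter of assembling the right citations in the right order rather than carrying out new work. The key observation driving every part is Corollary \ref{cubic-stable-empty}, which tells us that in the cubic case the stable boundary is empty, i.e. $\partial\overline{\mathrm{NM}}_3\cap\mathrm{Rat}_3^s=\emptyset$. This is what makes $d=3$ degenerate in a uniform way: every boundary point is strictly semistable, and all such points collapse to a single GIT class.

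For (1) I would invoke Corollary \ref{Closure-Max-Mea}, which expresses $\overline{\Theta}(\mathrm{nm}_d)$ as the union of three pieces: $\Theta(\mathrm{nm}_d)$, one atomic measure $[\mu_{f_{bc}}]$ for each $f$ in the stable part of the boundary, and the single point $[\delta_0/2+\delta_\infty/2]$. Setting $d=3$ and quoting Corollary \ref{cubic-stable-empty} kills the middle union, leaving exactly the claimed description.

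For (2) the same strategy applies to Corollary \ref{Max-Mea}: the middle union indexed over $f\in\partial\overline{\mathrm{NM}}_3\cap\mathrm{Rat}_3^s$ is again empty, so
\[
\widetilde{\mathrm{nm}}_3 = \mathrm{Graph}(\Theta|_{\mathrm{nm}_3}) \cup \bigl\{\bigl([X^{(d-1)/2}Y^{(d-1)/2}[(d-1)X:(d+1)Y]]_{\mathrm{GIT}},\,[\delta_0/2+\delta_\infty/2]\bigr)\bigr\}.
\]
The only thing to verify is the short algebraic identity obtained by plugging $d=3$ into the universal semistable representative, which gives $X Y[2X:4Y]=[XY[X:2Y]]_{\mathrm{GIT}}$ after absorbing the scalar $2$, matching the normalization used throughout Example \ref{cubic Newton} and its corollary.

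For (3) I would chain Theorem \ref{Max-Mea-GIT}, which gives a homeomorphism $\widetilde{\mathrm{nm}}_d\cong\overline{\mathrm{nm}}_d$ for every $d\ge 2$, with the earlier corollary stating $\overline{\mathrm{nm}}_3\cong\mathbb{P}^1$ (itself a consequence of Proposition \ref{semi-singleton} turning $\overline{\mathrm{nm}}_3$ into the one-point compactification of $\overline{\mathrm{NM}}_3\cap\mathrm{Rat}_3^s/\mathrm{Aut}(\mathbb{C})$, which in the cubic case is itself homeomorphic to $\mathbb{C}$ via the cross-ratio of the three roots). There is no real obstacle here; the only thing worth being careful about is making sure that the two normalizations of the semistable representative agree, but this is the same scaling check needed for part (2). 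I would therefore present the whole corollary as a single short paragraph of deductions, with part (3) stated as an immediate consequence of (2) and the cited homeomorphisms.
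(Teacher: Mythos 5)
Your proof is correct and follows essentially the same route as the paper's: Corollary \ref{cubic-stable-empty} shows the stable boundary is empty, which eliminates the middle unions in Corollaries \ref{Closure-Max-Mea} and \ref{Max-Mea}, and then Theorem \ref{Max-Mea-GIT} together with $\overline{\mathrm{nm}}_3\cong\mathbb{P}^1$ gives part (3). Your explicit verification that $XY[2X:4Y]=XY[X:2Y]$ in projective coordinates is a minor scaling check the paper leaves implicit, but the underlying argument is the same.
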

\begin{proof}By Corollary \ref{cubic-stable-empty}, we know $\partial\overline{\mathrm{NM}}_3\cap\mathrm{Rat}^s_d=\emptyset$. Note $\overline{\mathrm{nm}}_3$ is homeomorphic to $\mathbb{P}^1$. Then the results follow from Corollary \ref{Closure-Max-Mea}, Corollary \ref{Max-Mea} and Proposition \ref{Max-Mea-GIT}.
\end{proof}

\section{Deligne-Mumford Compactifications}\label{Deligne-Mumford}
\subsection{Moduli Spaces of Marked Riemann Surfaces}
In this subsection, we give the background of moduli spaces of marked Riemann surfaces and the corresponding Deligne-Mumford compactifications.\par
Let $\Sigma_{g,n}$ be a Riemann surface of genus $g$ with $n$ marked points. For $2-2g-n<0$, the moduli space $\mathcal{M}_{g,n}$ is the set of isomorphism classes $[\Sigma_{g,n}]$ of Riemann surfaces of genus $g$ with $n$ marked points. The automorphism group of any Riemann surface satisfying $2-2g-n<0$ is finite \cite[Section V.1]{Farkas92}.\par
Here, we are interested only in the case $g=0$. We give some examples to illustrate the moduli spaces $\mathcal{M}_{0,n}$. For more examples, we refer \cite[Section 1]{Zvonkine12}.
\begin{example}
Let $n=3$. Any Riemann surface of genus $0$ with three marked points can be identified with $(\mathbb{P}^1, 0, 1,\infty)$. Thus $\mathcal{M}_{0,3}$ is a singleton.
\end{example}
\begin{example}
Let $n=4$. Any Riemann surface of genus $0$ with four marked points $(x_1,x_2,x_3,x_4)$ can be uniquely identified with $(\mathbb{P}^1, 0, 1,\infty, t)$, where $t\in\mathbb{P}^1\setminus\{0,1,\infty\}$ is determined by the positions of marked points on the original Riemann surface. If the original Riemann surface is $\mathbb{P}^1$, then $t$ is the cross-ratio of $x_1,x_2,x_3,x_4$. Thus the moduli space $\mathcal{M}_{0,4}$ is the values of $t$. Hence $\mathcal{M}_{0,4}$ can be identified with $\mathbb{P}^1\setminus\{0,1,\infty\}$.
\end{example}
To compactify the moduli space $\mathcal{M}_{0,n}$, we need to add some new points that are called ``stable curves". We define the Riemann surfaces with nodes, which were first introduced by Bers \cite{Bers74}.
\begin{definition}
A surface $\Sigma$ of genus $0$ with nodes is a Hausdorff space whose every point has a neighborhood homeomorphic either to a disk in the complex plane or to 
$$U=\{(z,w)\in\mathbb{C}^2:zw=0, |z|<1, |w|<1\}.$$
\end{definition}
Following Bers \cite{Bers74}, a point $p$ in the surface $\Sigma$ with nodes is a node if every neighborhood of $p$ contains an open set homeomorphic to $U$. Each component of the complement of the nodes of $\Sigma$ is called a part of $\Sigma$. Then each part is a Riemann surface of genus $0$  and compact except for punctures.\par
The stable curves genus of $g=0$ are defined as following, see \cite[Definition 1.25]{Zvonkine12}.
\begin{definition} 
A \textit{stable curve} $\mathcal{C}$ of genus $g=0$ with $n$ marked points is a complex algebraic curve such that
\begin{enumerate}
\item the only singularities of $\mathcal{C}$ are simple nodes ,
\item the marked points are distinct and none are nodes,
\item the surface obtained from $\mathcal{C}$ by resolving all its nodes is of genus $0$,
\item for each part, the number of marked points and nodes is at least $3$.
\end{enumerate}
\end{definition}
From the definition, we know a stable curve $\mathcal{C}$ of genus $0$ with $n$ marked points has a finite number of automorphisms.\par
The Deligne-Mumford compactification $\widehat{\mathcal{M}}_{0,n}$ of the moduli space $\mathcal{M}_{0,n}$ was introduced by P. Deligne and D. Mumford \cite{Deligne69}. It is smooth compact complex $n$-dimensional irreducible projective variety \cite{Harris98}. In the algebraic and analytic categories, it is a coarse moduli space for the stable curves functor \cite{Hubbard14}. The set 
$$\partial\mathcal{M}_{0,n}:=\widehat{\mathcal{M}}_{0,n}\setminus\mathcal{M}_{0,n}$$
parametrizing singular stable curves is called the boundary of $\mathcal{M}_{0,n}$. We refer \cite{Funahashi12} and \cite[Section 4.3]{Salamon99} for the topology on the spaces $\widehat{\mathcal{M}}_{0,n}$ .
\begin{example}
The space $\mathcal{M}_{0,3}$ is a singleton. Hence $\widehat{\mathcal{M}}_{0,3}$ is also a singleton.
\end{example} 
\begin{example}
The boundary $\partial\mathcal{M}_{0,4}$ consists of three singular stable rational curves with $4$ marked points. Each of these curves are two copies of $\mathbb{P}^1$ glued at a point, with marked points.
\end{example} 
We can define the affine moduli space $\mathcal{M}^\ast_{0,n}$ consisting of the affine conjugate classes $[(x_1,\cdots,x_n)]$ of ordered $n$ distinct points in $\mathbb{C}$, that is 
$$\mathcal{M}^\ast_{0,n}=\{(x_1,\cdots,x_n)\in\mathbb{C}^n\setminus\Delta_n\}/\mathrm{Aut}(\mathbb{C}),$$
where $\Delta_n=\{(z_1,\cdots,z_n): \exists i\not=j\ \text{such that}\ z_i=z_j\}\subset\mathbb{C}^n$ is the diagonal set.
Then, for example, $\mathcal{M}^\ast_{0,n}$ is a singleton for $n=1$ and $2$, and 
$$\mathcal{M}^\ast_{0,3}=\mathbb{C}\setminus\{0,1\}.$$
In general, we have
\begin{lemma}
For $n\ge 1$, the map 
$$\mathcal{I}:\mathcal{M}^\ast_{0,n}\to\mathcal{M}_{0,n+1},$$ 
sending $[(x_1,\cdots,x_n)]$ to $[(\infty,x_1,\cdots,x_n)]$, is a bijection.
\end{lemma}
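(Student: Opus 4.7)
The plan is to verify the three standard conditions: well-definedness, surjectivity, and injectivity, all of which reduce to the elementary observation that the stabilizer of $\infty$ in $\mathrm{PSL}_2(\mathbb{C})$ is exactly $\mathrm{Aut}(\mathbb{C})$, the group of affine transformations $z \mapsto az+b$.

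For well-definedness, I would start with two representatives $(x_1,\dots,x_n)$ and $(y_1,\dots,y_n)$ of the same class in $\mathcal{M}^\ast_{0,n}$, so there is $M\in\mathrm{Aut}(\mathbb{C})$ with $M(x_i)=y_i$. Extending $M$ to $\mathbb{P}^1$ as a Möbius transformation, $M$ fixes $\infty$, and thus $(\infty,x_1,\dots,x_n)$ and $(\infty,y_1,\dots,y_n)$ lie in the same $\mathrm{PSL}_2(\mathbb{C})$-orbit, giving the same class in $\mathcal{M}_{0,n+1}$. Note the stability hypothesis $n+1\ge 3$ is automatic once $n\ge 2$; for $n=1$ the claim is that both spaces are singletons, which is immediate.

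For surjectivity, given $[(p_0,p_1,\dots,p_n)]\in\mathcal{M}_{0,n+1}$, I would pick any $M\in\mathrm{PSL}_2(\mathbb{C})$ with $M(p_0)=\infty$ (for instance $M(z)=1/(z-p_0)$ if $p_0\ne\infty$, or $M=\mathrm{Id}$ otherwise). Then the points $q_i:=M(p_i)\in\mathbb{C}$ for $i=1,\dots,n$ are distinct, so $[(q_1,\dots,q_n)]\in\mathcal{M}^\ast_{0,n}$ is a well-defined preimage under $\mathcal{I}$.

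For injectivity, suppose $\mathcal{I}([(x_1,\dots,x_n)])=\mathcal{I}([(y_1,\dots,y_n)])$. Then there exists $M\in\mathrm{PSL}_2(\mathbb{C})$ with $M(\infty)=\infty$ and $M(x_i)=y_i$ for $i=1,\dots,n$. Since $M$ fixes $\infty$, it is of the form $M(z)=az+b$ with $a\in\mathbb{C}^\ast$, i.e., $M\in\mathrm{Aut}(\mathbb{C})$, so the original classes agree in $\mathcal{M}^\ast_{0,n}$. There is no real obstacle here; the content of the lemma is entirely the group-theoretic identification $\mathrm{Stab}_{\mathrm{PSL}_2(\mathbb{C})}(\infty)=\mathrm{Aut}(\mathbb{C})$, and each of the three verifications is one line once that is invoked.
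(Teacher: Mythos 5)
Your proof is correct and follows essentially the same route as the paper's: injectivity comes from observing that any $M\in\mathrm{PSL}_2(\mathbb{C})$ intertwining the two marked tuples must fix $\infty$ and hence be affine, while surjectivity comes from moving the first marked point to $\infty$ by a M\"obius transformation. The only addition is your explicit well-definedness check (and the remark about the small-$n$ cases), which the paper leaves implicit; otherwise the arguments coincide.
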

\begin{proof}
Suppose $\mathcal{I}([(x_1,\cdots,x_n)])=\mathcal{I}([(x'_1,\cdots,x'_n)])$. Then there is an $M\in\mathrm{PSL}_2(\mathbb{C})$ such that 
$$(M(\infty),M(x_1),\cdots, M(x_n))=(\infty,x'_1,\cdots,x'_n).$$
Thus $M$ is affine and $M(x_i)=x'_i$. Hence $[(x_1,\cdots,x_n)]=[(x'_1,\cdots,x'_n)]$. So $\mathcal{I}$ is injective.\par 
For any point $[(p_1,\cdots,p_{n+1})]\in\mathcal{M}_{0,n+1}$, there exists a  representative has form $(\infty,p_2',\cdots,p'_{d+1})$. So $\mathcal{I}$ maps $[(p_2',\cdots,p'_{d+1})]\in\mathcal{M}^\ast_{0,n}$ to $[(\infty,p_2',\cdots,p'_{d+1})]\in\mathcal{M}_{0,n+1}$. Thus, $\mathcal{I}([(p_2',\cdots,p'_{d+1})])=[(p_1,\cdots,p_{n+1})]$. So $\mathcal{I}$ is surjective.
\end{proof}
Note $\mathcal{M}_{0,n+1}$ has Deligne-Mumford compactification $\widehat{\mathcal{M}}_{0,n+1}$. Then the map $\mathcal{I}$ induces a compactification of the space $\mathcal{M}^\ast_{0,n}$. We denote this compactification by 
$$\widehat{\mathcal{M}}^\ast_{0,n}:=\widehat{\mathcal{M}}_{0,n+1}.$$

\subsection{Marked Newton Maps}
In this subsection, we consider spaces of the fixed-points marked Newton maps and the corresponding moduli spaces.\par
Recall that a fixed-point marked rational map 
$$(f, p_1,\cdots,p_{d+1})\in\mathrm{Rat}_d\times(\hat{\mathbb{C}})^{d+1}$$ is a map $f\in\mathrm{Rat}_d$ together with an ordered list of its fixed points. Denote by $\mathrm{Rat}_d^{\mathrm{fm}}$ the space of all fixed-points marked degree $d$ rational maps. We can define the topology on $\mathrm{Rat}_d^{\mathrm{fm}}$ by convergence. We say $(f_n,p_1^{(n)},\cdots,p_{d+1}^{(n)})$ converges to $(f,d_1,\cdots,p_{d+1})\in\mathrm{Rat}_d^{\mathrm{fm}}$, as $n\to\infty$, if $f_n$ converges to $f$ uniformly and $p_i^{(n)}\to p_i$ for $i=1,\cdots,d+1$.\par 
Let $\mathrm{NM}_d^{\mathrm{fm}}\subset\mathrm{Rat}_d^{\mathrm{fm}}$ be the space of fixed-points marked degree $d$ Newton maps, and associate the space $\mathrm{NM}_d^{\mathrm{fm}}$ with the subspace topology. For $N\in\overline{\mathrm{NM}}_d\setminus\mathrm{NM}_d$, there exists a unique set $r=\{r_1,\cdots,r_d\}$ of $d$ points (not necessary distinct) in $\mathbb{P}^1$ such that $N=N_r$. We define $r_1,\cdots,r_d,\infty$ to be the fixed points of the degenerate Newton maps $N_r$. Let $\overline{\mathrm{NM}}_d^{\mathrm{fm}}$ be the space consisting of $(N,p_1\cdots,p_{d+1})$ where $N\in\overline{\mathrm{NM}}_d$ and $(p_1\cdots,p_{d+1})$ is an ordered list of the fixed points of $N$. In fact, for $(N=H_N\widehat N,p_1\cdots,p_{d+1})\in\overline{\mathrm{NM}}_d^{\mathrm{fm}}$, if $r\in\mathbb{P}^1$ is a fixed point of $\widehat N$, there are $d_{r}(N)+1$ many points, counted multiplicity, in $\{p_1,\cdots,p_{d+1}\}$ are $r$, where $d_r(N)$ is the depth of $r\in\mathbb{P}^1$. On the space $\overline{\mathrm{NM}}_d^{\mathrm{fm}}$, we can define the topology by convergence. We say a sequence $\{(N_n,p_1^{(n)},\cdots,p_{d+1}^{(n)})\}$ in $\overline{\mathrm{NM}}_d^{\mathrm{fm}}$ converges to $(N,p_1,\cdots,p_{d+1})\in\overline{\mathrm{NM}}_d^{\mathrm{fm}}$, as $n\to\infty$, if $N_n$ converges to $N$ in $\mathbb{P}^{2d+1}$ and $p_i^{(n)}\to p_i$ for $i=1,\cdots,d+1$. Note $\mathrm{NM}_d^{\mathrm{fm}}\subset\overline{\mathrm{NM}}_d^{\mathrm{fm}}$. As a remark, we notice that this topology coincides with the subspace topology on the space $\mathrm{NM}_d^{\mathrm{fm}}$. \par 
\begin{lemma}\label{fixed-points-Newton-method}
Under the above topology, $\overline{\mathrm{NM}}_d^{\mathrm{fm}}$ is compact and contains $\mathrm{NM}_d^{fm}$ as an open dense subset.
\end{lemma}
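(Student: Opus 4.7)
The plan is to view $\overline{\mathrm{NM}}_d^{\mathrm{fm}}$ as a subspace of the compact Hausdorff product $\overline{\mathrm{NM}}_d\times(\widehat{\mathbb{C}})^{d+1}$ endowed with the subspace topology; by construction, this agrees with the sequential topology imposed just above the statement. Compactness then reduces to showing that $\overline{\mathrm{NM}}_d^{\mathrm{fm}}$ is closed in this product, and the remaining claims about $\mathrm{NM}_d^{\mathrm{fm}}$ split into short openness and density verifications.

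For closedness, I would suppose $(N_n,p_1^{(n)},\ldots,p_{d+1}^{(n)})\in\overline{\mathrm{NM}}_d^{\mathrm{fm}}$ converges componentwise to $(N,p_1,\ldots,p_{d+1})$. The essential input is continuity of the fixed-point-multiset map $\overline{\mathrm{NM}}_d\to\mathrm{Sym}^{d+1}(\widehat{\mathbb{C}})$ sending $N=N_r$ to $\{r_1,\ldots,r_d,\infty\}$, which is just continuous dependence of roots of a monic polynomial on its coefficients, suitably handled via projective charts on $\overline{\mathrm{NM}}_d$ to accommodate roots escaping to $\infty$. It then follows that the multiset $\{p_1^{(n)},\ldots,p_{d+1}^{(n)}\}$ converges to $\{p_1,\ldots,p_{d+1}\}$ and this limit multiset equals the fixed-point multiset of $N$; in particular every fixed point of $\widehat{N}$ appears in the limit ordering with the required depth-plus-one multiplicity, so $(N,p_1,\ldots,p_{d+1})\in\overline{\mathrm{NM}}_d^{\mathrm{fm}}$.

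Openness of $\mathrm{NM}_d^{\mathrm{fm}}$ is immediate: $\mathrm{NM}_d$ is cut out of $\overline{\mathrm{NM}}_d$ by nonvanishing of the discriminant of the defining polynomial, and
$$\mathrm{NM}_d^{\mathrm{fm}}=\bigl(\mathrm{NM}_d\times(\widehat{\mathbb{C}})^{d+1}\bigr)\cap\overline{\mathrm{NM}}_d^{\mathrm{fm}}.$$
For density, given $(N,p_1,\ldots,p_{d+1})\in\overline{\mathrm{NM}}_d^{\mathrm{fm}}$ with distinct finite underlying roots $q_1,\ldots,q_k\in\mathbb{C}$ of multiplicities $m_1,\ldots,m_k$ (and possibly additional $\infty$-labels coming from $d_\infty(N)>0$), I would replace each cluster by pairwise distinct nearby points $q_i+\varepsilon_{i,\ell}^{(n)}\to q_i$, together with large finite perturbations in place of the extra $\infty$-labels, assigning the perturbed roots to the labels $p_j$ according to which cluster $p_j$ lies in, while retaining one label fixed at $\infty$. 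The resulting polynomials $P_n$ have $d$ simple roots in $\mathbb{C}$, so $N_{P_n}\in\mathrm{NM}_d$; the coefficients of $N_{P_n}$ are symmetric functions of the perturbed roots and hence converge to those of $N$ in $\mathbb{P}^{2d+1}$, while the markings converge by construction.

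The main obstacle is the closedness step, which hinges on continuity of the root-multiset map across $\partial\overline{\mathrm{NM}}_d$; this requires a careful local-coordinate argument in the projective parametrization $\mathrm{Sym}^d(\widehat{\mathbb{C}})\to\overline{\mathrm{NM}}_d\subset\mathbb{P}^{2d+1}$ to track roots tending to $\infty$, and a small bookkeeping subtlety in the density step to handle the case $d_\infty(N)>0$ so that exactly one label is kept at $\infty$ (consistent with the unique repelling fixed point of any Newton map).
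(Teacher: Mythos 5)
Your proof is correct and takes essentially the same approach as the paper: realize $\overline{\mathrm{NM}}_d^{\mathrm{fm}}$ inside the compact product $\overline{\mathrm{NM}}_d\times(\widehat{\mathbb{C}})^{d+1}$, check closedness via convergence of fixed-point data, and then observe openness and density of $\mathrm{NM}_d^{\mathrm{fm}}$. If anything, your version is more careful than the paper's terse argument — in particular in making explicit that the limit marking must match the fixed-point \emph{multiset} of $N$ with the correct depth-plus-one multiplicities, and in giving an actual perturbation construction for density rather than citing density of $\mathrm{NM}_d$ in $\overline{\mathrm{NM}}_d$.
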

\begin{proof}
Suppose $\{(N_n,p_1^{(n)},\cdots,p_{d+1}^{(n)})\}\subset\overline{\mathrm{NM}}_d^{\mathrm{fm}}$ is a sequence such that $(N_n,p_1^{(n)},\cdots,p_{d+1}^{(n)})\to(N,p_1,\cdots,p_{d+1})$, as $n\to\infty$. Then $N\in\overline{\mathrm{NM}}_d$ and each $p_i$ is a  fixed points of $N$. Thus, $(N,p_1,\cdots,p_{d+1})\in\overline{\mathrm{NM}}_d^{\mathrm{fm}}$. Note $\mathrm{NM}_d$ is a dense open subset of $\overline{\mathrm{NM}}_d$. Therefore, $\mathrm{NM}_d^{\mathrm{fm}}$ is an open dense subset of $\overline{\mathrm{NM}}_d^{\mathrm{fm}}$.
\end{proof}
Note $z=\infty$ is the unique repelling fixed point for each Newton maps. Thus it is reasonable to consider the the space $\mathrm{NM}_d^\ast\subset\mathrm{NM}_d^{\mathrm{fm}}$ consisting of $(N,\infty,p_1,\cdots,p_d)\in\mathrm{NM}_d^{\mathrm{fm}}$, where $p_i\in\mathbb{C}$. Similarly, define $\overline{\mathrm{NM}}_d^\ast\subset\overline{\mathrm{NM}}_d^{\mathrm{fm}}$ to be the closure of $\mathrm{NM}_d^\ast$.\par
The group $\mathrm{PSL}_2(\mathbb{C})$ can act on $\mathrm{Rat}_d^{\mathrm{fm}}$ by right action:
$$g\cdot(f, p_1,\cdots,p_{d+1})=(g^{-1}\circ f\circ g, g^{-1}(p_1),\cdots,g^{-1}(p_{d+1})).$$
Define the fixed-points marked moduli space $\mathrm{rat}_d^{\mathrm{fm}}$ to be the quotient space 
$$\mathrm{rat}_d^{\mathrm{fm}}:=\mathrm{Rat}_d^{\mathrm{fm}}/\mathrm{PSL}_2(\mathbb{C}).$$\par 
By the specialty of $z=\infty$, let 
$$\mathrm{nm}_d^\ast:=\mathrm{NM}_d^\ast/\mathrm{Aut}(\mathbb{C})\subset \mathrm{rat}_d^{\mathrm{fm}}$$
be the fixed-points marked moduli space of degree $d$ Newton maps, associated with the quotient topology. \par
%\begin{lemma}
%The space $\mathrm{NM}_d^\ast$ is a smooth complex $d$-dimensional manifold. The action of $\mathrm{PGL}_2(\mathbb{C})$ on $\mathrm{NM}_d^\ast$ by right action is free, so $\mathrm{nm}_d^\ast$ is a smooth complex $d-2$-dimensional manifold.
%\end{lemma}
%\begin{proof}
%Note $(N, \infty,p_1,\dots,p_d)\in\mathrm{NM}_d^\ast$ is uniquely determined by the ordered $d$-tuple $(p_1,\cdots,p_{d})$. So the space $\mathrm{NM}_d^\ast$ has dimension $d$. Note $p_1,\dots,p_d$ are pairwise distinct. Thus the action is free. Moreover, the action is smooth and proper. Moreover, $\mathrm{nm}_d^\ast$ has dimension $d-2$ .
%\end{proof}
Let $r=\{r_1,\cdots,r_d\}$ and $\tilde{r}=\{0,1,\tilde{r}_3,\cdots,\tilde{r}_d\}$ be two subsets of $d$ distinct points in $\mathbb{C}$. If there exists $M\in\mathrm{Aut}(\mathbb{C})$ such that $M(0)=r_1$, $M(1)=r_2$ and $M(\tilde{r}_i)=r_i$ for $3\le i\le d$, then 
$$M^{-1}\circ N_r\circ M=N_{\tilde{r}}.$$ 
Thus
$$[(N_r,\infty,r_1,\cdots, r_d)]=[(N_{\tilde{r}},\infty,0,1, \tilde{r}_3,\cdots, \tilde{r}_d)]\in\mathrm{nm}_d^\ast.$$
Then we have 
\begin{lemma}
For $d\ge 2$, let $r=\{0,1,r_3,\cdots,r_d\}\subset\mathbb{C}$ be a set of $d$ distinct points. Then, as set
$$\mathrm{nm}_d^\ast=\{[(N_r,\infty,0,1,r_3,\cdots,r_d)]:(r_3,\cdots,r_d)\in(\mathbb{C}\setminus\{0,1\})^{d-2}\setminus\Delta_{d-2}\}.$$
\end{lemma}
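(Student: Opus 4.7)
The plan is to show the containment $\subseteq$ by exhibiting, for every class in $\mathrm{nm}_d^\ast$, an affinely normalized representative whose first two finite marked points are $0$ and $1$; the reverse containment is tautological since any listed tuple manifestly lies in $\mathrm{NM}_d^\ast$.

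Given an arbitrary class $[(N_r,\infty,r_1,\dots,r_d)]\in\mathrm{nm}_d^\ast$ with $r=\{r_1,\dots,r_d\}\subset\mathbb{C}$ distinct, the first step is to produce an affine map $M\in\mathrm{Aut}(\mathbb{C})$ that normalizes $r_1$ and $r_2$. Since $r_1\neq r_2$, the map $M(z)=r_1+(r_2-r_1)z$ is a well-defined element of $\mathrm{Aut}(\mathbb{C})$ satisfying $M(0)=r_1$ and $M(1)=r_2$. Applying the right action of $M$ to the tuple sends $r_i\mapsto M^{-1}(r_i)$; because $M$ is affine it fixes $\infty$, and I compute $M^{-1}(r_i)=(r_i-r_1)/(r_2-r_1)=:\tilde r_i$, so that $\tilde r_1=0$, $\tilde r_2=1$, and the remaining $\tilde r_3,\dots,\tilde r_d$ lie in $\mathbb{C}\setminus\{0,1\}$ and are pairwise distinct precisely because the $r_i$ are.

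The second step is to identify the conjugate $M^{-1}\circ N_r\circ M$ as a Newton map with the expected root set. This is exactly the equivariance already recorded in the paragraph immediately preceding the lemma: for $\tilde r=\{0,1,\tilde r_3,\dots,\tilde r_d\}$ one has $M^{-1}\circ N_r\circ M=N_{\tilde r}$, since Newton's method commutes with affine change of coordinate. Combining this with the computation of the marked points above yields
\[
[(N_r,\infty,r_1,\dots,r_d)]=[(N_{\tilde r},\infty,0,1,\tilde r_3,\dots,\tilde r_d)]
\]
in $\mathrm{nm}_d^\ast$, with $(\tilde r_3,\dots,\tilde r_d)\in(\mathbb{C}\setminus\{0,1\})^{d-2}\setminus\Delta_{d-2}$, as required.

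No real obstacle is expected: the proof is a one-step normalization using the unique affine map sending $(0,1)$ to $(r_1,r_2)$. The only points to check explicitly are the distinctness of the new roots and the fact that they avoid $\{0,1\}$ (immediate from distinctness of the original $r_i$), together with the equivariance $M^{-1}\circ N_r\circ M=N_{M^{-1}(r)}$, which the paper states just before the lemma. As a bonus one sees that this representative is in fact unique, since the only affine map fixing both $0$ and $1$ is the identity, so the parametrization on the right-hand side is injective as well, though the lemma only asserts set equality.
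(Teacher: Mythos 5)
Your proof is correct and follows exactly the approach the paper takes: the paragraph preceding the lemma records the equivariance $M^{-1}\circ N_r\circ M=N_{\tilde r}$ under affine conjugation, and the lemma is then an immediate consequence of normalizing $r_1,r_2$ to $0,1$ via the unique affine map $M(z)=r_1+(r_2-r_1)z$, as you do.
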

Furthermore, 
\begin{proposition}\label{Newton-method-affine-moduli}
The moduli space $\mathrm{nm}_d^\ast$ is homeomorphic to the moduli space $\mathcal{M}^\ast_{0,d}$.
\end{proposition}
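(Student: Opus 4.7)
The plan is to exhibit an explicit $\mathrm{Aut}(\mathbb{C})$-equivariant homeomorphism between the pre-quotient spaces and then descend to the moduli spaces.

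First, I define
\[
\Psi : \mathbb{C}^d\setminus\Delta_d \longrightarrow \mathrm{NM}_d^{\ast},\qquad (r_1,\ldots,r_d)\longmapsto\bigl(N_{\{r_1,\ldots,r_d\}},\,\infty,\,r_1,\ldots,r_d\bigr).
\]
Since the coefficients of the polynomial $P(z)=\prod_i(z-r_i)$ depend polynomially on $(r_1,\ldots,r_d)$, the Newton map $N_P$ depends continuously on $(r_1,\ldots,r_d)$ as a point of $\mathrm{Rat}_d\subset\mathbb{P}^{2d+1}$; combined with the trivial continuity of the marking coordinates, this shows $\Psi$ is continuous. Surjectivity is immediate from the description of $\mathrm{NM}_d^{\ast}$: every element has the form $(N,\infty,r_1,\ldots,r_d)$ with the $r_i$ distinct points of $\mathbb{C}$, and $N$ is then necessarily $N_{\{r_1,\ldots,r_d\}}$. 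Injectivity is equally clear. The inverse map is the projection $(N,\infty,r_1,\ldots,r_d)\mapsto(r_1,\ldots,r_d)$, which is continuous in the topology on $\mathrm{NM}_d^{\ast}\subset\overline{\mathrm{NM}}_d^{\mathrm{fm}}$ (by the convergence-based definition of that topology, stated just before Lemma \ref{fixed-points-Newton-method}). Hence $\Psi$ is a homeomorphism.

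Next I verify that $\Psi$ intertwines the two $\mathrm{Aut}(\mathbb{C})$-actions. For $M\in\mathrm{Aut}(\mathbb{C})$, the right action on $\mathrm{Rat}_d^{\mathrm{fm}}$ gives
\[
M\cdot\bigl(N_{\{r_1,\ldots,r_d\}},\infty,r_1,\ldots,r_d\bigr)=\bigl(M^{-1}\!\circ N_{\{r_1,\ldots,r_d\}}\!\circ M,\,\infty,\,M^{-1}(r_1),\ldots,M^{-1}(r_d)\bigr).
\]
Since conjugating a Newton map by an affine map yields the Newton map for the image roots, i.e.\ $M^{-1}\circ N_{\{r_1,\ldots,r_d\}}\circ M = N_{\{M^{-1}(r_1),\ldots,M^{-1}(r_d)\}}$, this matches the pull-back action $(r_1,\ldots,r_d)\mapsto(M^{-1}(r_1),\ldots,M^{-1}(r_d))$ on $\mathbb{C}^d\setminus\Delta_d$ used to form $\mathcal{M}^{\ast}_{0,d}$.

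Therefore $\Psi$ descends to a continuous bijection
\[
\overline{\Psi}:\mathcal{M}^{\ast}_{0,d}=\bigl(\mathbb{C}^d\setminus\Delta_d\bigr)\big/\mathrm{Aut}(\mathbb{C})\;\longrightarrow\;\mathrm{NM}_d^{\ast}\big/\mathrm{Aut}(\mathbb{C})=\mathrm{nm}_d^{\ast},
\]
whose inverse is induced by the continuous projection onto the ordered roots. Both quotient topologies are defined in exactly parallel ways (quotient of a subspace of $\mathbb{C}^d$, respectively of $\mathrm{Rat}_d^{\mathrm{fm}}$, by the same group), so $\overline{\Psi}$ and its inverse are continuous on the quotients by the universal property, giving the required homeomorphism.

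The only subtle point is the continuity of the inverse at configurations with small separation: one must check that if $N_n\to N$ in $\mathbb{P}^{2d+1}$ and the markings $r_i^{(n)}\to r_i$ with all $r_i$ distinct, then $(r_1^{(n)},\ldots,r_d^{(n)})\to(r_1,\ldots,r_d)$ in $\mathbb{C}^d\setminus\Delta_d$ — but this is immediate from the definition of the topology on $\overline{\mathrm{NM}}_d^{\mathrm{fm}}$, so there is no real obstacle. The main conceptual step is simply the equivariance identity $M^{-1}\circ N_{\{r_i\}}\circ M=N_{\{M^{-1}(r_i)\}}$, which makes the two quotients manifestly the same.
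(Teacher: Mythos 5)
Your proof is correct and takes essentially the same approach as the paper: the paper descends the map $(N,\infty,r_1,\ldots,r_d)\mapsto(r_1,\ldots,r_d)$ (the inverse of your $\Psi$) through a commutative diagram, using the same key observation that $N$ is uniquely determined by the ordered roots and that the map is bijective, continuous, and open. Your explicit verification of the $\mathrm{Aut}(\mathbb{C})$-equivariance via $M^{-1}\circ N_{\{r_i\}}\circ M=N_{\{M^{-1}(r_i)\}}$ is the same content the paper expresses by saying the diagram commutes.
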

\begin{proof}Consider the following diagram
\[
\begin{CD}
\mathrm{NM}_d^\ast@>\widetilde\phi>>\mathbb{C}^{d}\setminus\Delta_{d}\\
@V/\mathrm{Aut}(\mathbb{C})VV@V/\mathrm{Aut}(\mathbb{C})VV\\
\mathrm{nm}^\ast@>\phi>>\mathcal{M}^\ast_{0,d},
\end{CD}
\]
where 
$$\widetilde\phi((N,\infty,r_1,\cdots,r_{d}))=(r_1,\cdots,r_{d})$$ and 
$$\phi([(N,\infty,r_1,\cdots,r_{d})])=[(r_1,\cdots,r_{d})].$$ 
Then the diagram is commutes.\par
Note 
$$(N,\infty,r_1,\cdots,r_d)\in\mathrm{NM}_d^\ast\subset\mathrm{Rat}_d\times\{\infty\}\times\mathbb{C}^d\setminus\Delta_d$$ 
and $N$ is uniquely determined by $(r_1,\cdots,r_{d})$.
Thus $\widetilde\phi$ is bijective, open and continuous. Hence, $\widetilde\phi$ is a homeomorphism. Therefore, $\phi$ is a homeomorphism.
\end{proof}
By Proposition \ref{Newton-method-affine-moduli}, we know the compactification $\widehat{\mathcal{M}}^\ast_{0,d}$ of $\mathcal{M}^\ast_{0,d}$ induces a compactification of $\mathrm{nm}^\ast_d$. We say this compactification is the Deligne-Mumford compactification of $\mathrm{nm}_d$ and denoted by $\widehat{\mathrm{nm}}^\ast_d$.\par
\begin{example}
Cubic Newton maps.\par
Note $\mathrm{nm}_3$ is homeomorphic to $\mathcal{M}^\ast_{0,3}\cong\mathbb{C}\setminus\{0,1\}$. Thus $\widehat{\mathcal{M}}^\ast_{0,3}\cong\mathbb{P}^1$. So $\widehat{\mathrm{nm}}^\ast_3$ is homeomorphic to $\mathbb{P}^1$.
\end{example}
\begin{example}\label{quartic-DM-dim}
Quartic Newton maps.\par
Note $\mathrm{nm}_4$ is homeomorphic to $\mathcal{M}^\ast_{0,4}$. By definition, if $[x_1,x_2,x_3,x_4]\in\widehat{\mathcal{M}}^\ast_{0,4}\setminus\mathcal{M}^\ast_{0,4}$, then $[\infty,x_1,x_2,x_3,x_4]\in\widehat{\mathcal{M}}_{0,5}\setminus\mathcal{M}_{0,5}$. Thus $\widehat{\mathrm{nm}}^\ast_4\setminus\mathrm{nm}_4$ has complex dimension $1$.
\end{example}

\section{From Deligne-Mumford to GIT}\label{DM to GIT}
In this section, we study the relationships between the Deligne-Mumford compactification $\widehat{\mathrm{nm}}^\ast_d$ and the GIT compactification $\overline{\mathrm{nm}}_d$, and prove Theorem \ref{theorem-Deligne-Mumford-GIT}. Then main ingredient is to construct the marked Berkovich trees of spheres.\par
Let $\mathbb{L}$ be the completion of the field of formal Puiseux series and let $\mathbb{P}^1_{\mathrm{Ber}}$ be the Berkovich space over $\mathbb{L}$. For more background of Berkovich space, we refer \cite{Baker10}. For $t\in\mathbb{D}$, we say $\{f_t\}\subset\mathrm{Rat}_d$ be a holomorphic family if $f_t\in\mathrm{Rat}_d$ for $t\not=0$ and its coefficients are holomorphic functions of parameter $t$. The holomorphic family $\{f_t\}$ induces a rational map $\mathbf{f}\in\mathbb{L}(z)$ acting on $\mathbb{P}^1_{\mathrm{Ber}}$, see \cite{Kiwi14,Kiwi15} for details. Let $\{N_t\}$ be a holomorphic family of degree $d\ge 2$ Newton maps with superattracting fixed points $r(t)=\{r_1(t),\cdots,r_d(t)\}$. Regard each $r_i(t)$ as an element in $\mathbb{L}$ and denote by $\mathbf{r}_i$. Let $\mathbf{N}\in\mathbb{L}(z)$ be the induced map of $\{N_t\}$. Let $V$ be the set of type II repelling fixed points of $\mathbf{N}$, and define the following convex hulls in the Berkovich space $\mathbb{P}^1_{\mathrm{Ber}}$
$$H_{\mathrm{fix}}=\mathrm{Hull}(\{\mathbf{r}_1,\cdots,\mathbf{r}_d,\infty\})$$
and 
$$H_V=\mathrm{Hull}(V).$$
We refer \cite{Nie-1} for more details about these convex hulls.\par
If $\xi\in\mathbb{P}_{\mathrm{Ber}}^1$ is a type II point, then the tangent space $T_\xi\mathbb{P}_{\mathrm{Ber}}^1$ can be identified with $\mathbb{P}^1$. For $\vec{v}\in T_\xi\mathbb{P}_{\mathrm{Ber}}^1$, let $\mathbf{B}_\xi(\vec{v})^-$ be the Berkovich disk corresponding to $\vec{v}$. We say a Riemann sphere $\mathbb{P}^1$ with marked points is associated to $\xi$ if the marked points on $\mathbb{P}^1$ are induced by the directions $\vec{v}\in T_\xi\mathbb{P}_{\mathrm{Ber}}^1$ with $\mathbf{B}_\xi(\vec{v})^-\cap\{\mathbf{r}_1\cdots,\mathbf{r}_d,\infty\}\not=\emptyset$ via identifying $T_\xi\mathbb{P}_{\mathrm{Ber}}^1$ with $\mathbb{P}^1$. Note $H_V$ is a subtree in $\mathbb{P}_{\mathrm{Ber}}^1$. Now we associated $(\mathbf{N},\infty,\mathbf{r}_1,\dots,\mathbf{r}_d)$ a marked Berkovich tree $\mathbf{T}$ of spheres .  
\begin{definition}
The \textit{marked Berkovich tree of spheres} $\mathbf{T}$ for $(\mathbf{N},\infty,\mathbf{r}_1,\dots,\mathbf{r}_d)$ is the data of:
\begin{enumerate}
\item the subtree $H_V\subset\mathbb{P}_{\mathrm{Ber}}^1$,
\item for each point $v\in V$, take the associated marked Riemann sphere $\mathbb{P}^1$ with $v$. 
\end{enumerate}
\end{definition}
\begin{remark}
In general, the marked Berkovich trees of spheres can be defined for any $n\ge 3$ distinct marked points in $\mathbb{L}$ since we can analog $H_V$ to define the corresponding convex hull for these $n$ points. We refer \cite{Arfeux15} for such trees.
\end{remark}
If $H_V$ is not trivial, we can forget the information from the edges of $H_V$ and attach the Riemann spheres in $\mathbf{T}$ by identifying marked points in spheres associated to vertices incident to a common edge in $H_V$. Then we get a Riemann surface $\mathbf{\Sigma}$ with nodes. We say $\mathbf{\Sigma}$ is the Riemann surface with nodes induced by $\mathbf{T}$. For convenience, if $H_V$ is trivial, denote $\mathbf{\Sigma}=\mathbf{T}$, which is a marked Riemann sphere. 
\begin{example}
Cubic Newton maps.\par
Consider $r(t)=\{0,1,t\}$. Then $V=\{\xi_{0,|t|},\xi_g\}$, hence $H_V=[\xi_{0,|t|},\xi_g]$. So $\mathbf{T}$ is a tree of two Riemann spheres. Note the $\mathbb{P}^1$ at $\xi_{0,|t|}$ has two marked points $0$ and $1$, and the $\mathbb{P}^1$ at $\xi_g$ has two marked points $1$ and $\infty$. Hence $\mathbf{\Sigma}$ is a Riemann surface with $4$ marked points and $1$ node.
\end{example}
We say two marked Berkovich trees $\mathbf{T}$ and $\mathbf{T}'$ of spheres  are equivalent if they induce equivalent Riemann surfaces $\mathbf{\Sigma}$ and $\mathbf{\Sigma}'$ with nodes, that is $[\mathbf{\Sigma}]=[\mathbf{\Sigma}']$ in $\widehat{\mathcal{M}}^\ast_{0,d}$. Denote by $[\mathbf{T}]$ for the equivalent class. Let $\mathcal{T}_d$ be the set consisting of the equivalent classes $[\mathbf{T}]$.
\begin{proposition}\label{Berkovich-tree-moduli-space}
For $d\ge 2$, the map 
$$\phi:\mathcal{T}_d\to\widehat{\mathcal{M}}^\ast_{0,d},$$ 
sending $[\mathbf{T}]$ to $[\mathbf{\Sigma}]$, where $\mathbf{\Sigma}$ is the Riemann surface with nodes induced by $\mathbf{T}$, is a bijection.
\end{proposition}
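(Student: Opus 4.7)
The plan is to show $\phi$ is well-defined and bijective by leveraging the Berkovich-dynamical dictionary for Newton maps from \cite{Nie-1}. Injectivity is built into the definition of $\mathcal{T}_d$, since two marked Berkovich trees are declared equivalent precisely when their induced marked curves coincide in $\widehat{\mathcal{M}}^\ast_{0,d}$. Thus it suffices to verify (a) that $\phi$ lands in $\widehat{\mathcal{M}}^\ast_{0,d}$, i.e.\ each $\mathbf{\Sigma}$ is a genuine stable marked curve, and (b) that every stable marked curve is in the image of $\phi$.

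For (a), let $\mathbf{T}$ arise from $(\mathbf{N},\infty,\mathbf{r}_1,\ldots,\mathbf{r}_d)$. At each $v \in V$, the special points of the associated Riemann sphere are indexed by the directions $\vec{v} \in T_v\mathbb{P}^1_{\mathrm{Ber}}$ such that $\mathbf{B}_v(\vec{v})^-$ contains a marked point, together with the incident edges of $H_V$. I would invoke the classification of type II repelling fixed points of Newton maps from \cite{Nie-1}: these are exactly the type II points of $H_{\mathrm{fix}}=\mathrm{Hull}(\{\mathbf{r}_1,\ldots,\mathbf{r}_d,\infty\})$ at which at least three tangent directions contain a point of $\{\mathbf{r}_1,\ldots,\mathbf{r}_d,\infty\}$. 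This gives at least three special points on each component of $\mathbf{\Sigma}$; the components are spheres, the tree is connected because $H_V$ is, and the total genus after resolving nodes is zero. Hence $[\mathbf{\Sigma}] \in \widehat{\mathcal{M}}^\ast_{0,d}$.

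For (b), start with $[\mathcal{C}] \in \widehat{\mathcal{M}}^\ast_{0,d}$, viewed as a stable marked tree of $\mathbb{P}^1$'s with marked points $\infty, p_1,\ldots,p_d$ and combinatorial dual tree $\tau$. I would construct Puiseux series $\mathbf{r}_1,\ldots,\mathbf{r}_d \in \mathbb{L}$ whose Berkovich hull reproduces $\tau$: root $\tau$ at the component carrying $\infty$, assign positive rational lengths $\ell_e$ to the edges, and for each $i$ set $\mathbf{r}_i = \sum_e c_e\, t^{\ell_e}$, where the sum runs over edges of the path in $\tau$ from the root to the component containing $p_i$ and the coefficients $c_e \in \mathbb{C}$ are chosen so that on each sphere the prescribed relative positions of the marked points are realized. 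Taking $\{N_t\}$ to be the corresponding holomorphic family of Newton maps and applying the characterization cited in (a), the essential type II skeleton of $\mathrm{Hull}(\{\mathbf{r}_1,\ldots,\mathbf{r}_d,\infty\})$ coincides with $H_V$ for the induced Newton map $\mathbf{N}$, and the marked sphere at each $v \in V$ matches the corresponding component of $\mathcal{C}$. Consequently $\mathbf{\Sigma}(\mathbf{T}) \cong \mathcal{C}$, and $[\mathcal{C}] = \phi([\mathbf{T}])$.

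The principal obstacle is identifying $V$ with the essential vertices of $H_{\mathrm{fix}}$ for the Puiseux series constructed in (b). This identification is a non-trivial Berkovich-dynamical input from \cite{Nie-1}, resting on the fact that the reduction of $\mathbf{N}$ at such a type II point is itself a non-constant Newton map whose unique repelling fixed point is the node direction. Once this is in hand, matching the marked spheres on the two sides is a direct comparison of tangent directions and ball-membership, and together with the tautological injectivity it completes the bijection $\phi$.
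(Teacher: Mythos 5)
Your proof takes essentially the same route as the paper: injectivity and well-definedness are tautological from the definition of equivalence on $\mathcal{T}_d$, and surjectivity is proved by producing, for each boundary point of $\widehat{\mathcal{M}}^\ast_{0,d}$, a degenerating holomorphic family of marked configurations whose associated Berkovich tree of spheres induces the given stable curve. You add two bits of worthwhile detail that the paper compresses, namely the verification in (a) that each component of $\mathbf{\Sigma}$ has at least three special points (via the characterization of $V$ as the branch locus of $H_{\mathrm{fix}}$ from \cite{Nie-1}), and in (b) the explicit Puiseux-series construction of $\mathbf{r}_1,\ldots,\mathbf{r}_d$ realizing a prescribed dual tree, where the paper merely asserts existence of a suitable holomorphic family $\Sigma_t \to [\Sigma]$.
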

\begin{proof}
First by the definition of equivalence in Berkovich trees of spheres, we know the map $\phi$ is well defined. For $[\Sigma]\in\widehat{\mathcal{M}}^\ast_{0,d}$, there exits $\Sigma_t=(\mathbb{P}^1,\infty, r_1(t),\cdots,r_d(t))$ such that $\Sigma_t\to[\Sigma]$ as $t\to 0$, where $r_i(t)$s are holomorprhic functions of $t$. Let $\mathbf{r}_i:=r_i(t)\in\mathbb{L}$ and $\mathbf{r}=\{\mathbf{r}_1,\cdots,\mathbf{r}_d\}$. Then we can obtain a marked Berkovich tree $\mathbf{T}$ of spheres. With a suitable order of marked points we know the Riemann surface with nodes induced by $\mathbf{T}$ is $\Sigma$. Thus $\phi$ is surjective.\par
Now we show $\phi$ is injective. If $[\mathbf{\Sigma}]=[\mathbf{\Sigma}']$, let $\mathbf{T}$ and $\mathbf{T}'$ be the Berkovich trees of spheres who induce $\mathbf{\Sigma}$ and $\mathbf{\Sigma}'$, respectively. Then $[\mathbf{T}]=[\mathbf{T}']$. So $\phi$ is injective. 
\end{proof}
We say a sequence $\{[\mathbf{T}_n]\}$ in $\mathcal{T}_d$ converges to $[\mathbf{T}]\in\mathcal{T}_d$ if $\phi([\mathbf{T}_n])$ converges to $\phi([\mathbf{T}])$ in $\widehat{\mathcal{M}}^\ast_{0,d}$. Then we associate  $\mathcal{T}_d$ with the topology induced by this convergence. Under this topology on $\mathcal{T}_d$, the map $\phi$ is a homeomorphism. Let $\mathcal{S}_d\subset\mathcal{T}_d$ be the subset consisting of the equivalence classes $[\mathbf{T}]$, where $\mathbf{T}$ is a marked Riemann surface, i.e. the tree is trivial. Then the map $\phi$ gives a map 
$$\phi|_{\mathcal{S}_d}:\mathcal{S}_d\to\mathcal{M}_{0,d}^\ast,$$
which is a homeomorphism. \par
\begin{example}
Let $r_n(t)=\{0,3t^2,(3+1/n)t^2)\}$ and $r(t)=\{0,3,3+t\}$. Let $\mathbf{N}_n$ and $\mathbf{N}$ be the associated maps for $\{N_{r_n(t)}\}$ and $\{N_{r(t)}\}$, respectively. Consider the corresponding Berkovich trees $\mathbf{T}_n$ of spheres for $(\mathbf{N}_n,\infty,0,3t^2,(3+1/n)t^2)$ and the tree $\mathbf{T}$ for $(\mathbf{N}_n,\infty,0,3,3+t)$. Then $[\mathbf{T}_n]$ converges to $[\mathbf{T}]$ in $\mathcal{T}_d$.
\end{example}
Now we relate the space $\mathcal{T}_d$ to the GIT compactification $\overline{\mathrm{nm}}_d$ of the moduli space $\mathrm{nm}_d$ of degree $d\ge 2$ Newton maps.\par
Let $\{f_t\}$ be a holomorphic family of degree $d\ge 2$ rational maps and let $\mathbf{f}$ be the associated map for $\{f_t\}$. For a type II point $\xi\in\mathbb{P}_{\mathrm{Ber}}^1$, we say $f\in\mathbb{P}^{2d+1}$ is the subalgebraic reduction of $\mathbf{f}$ at $\xi$ if $M_t^{-1}\circ f_t\circ M_t$ converges to $f$ in $\mathbb{P}^{2d+1}$,
where $\{M_t\}\subset\mathrm{PSL}_2(\mathbb{C})$ with the induced map $\mathbf{M}$ mapping $\xi_g$ to $\xi$. In fact, we can assume $\{M_t\}\subset\mathrm{Aut}(\mathbb{C})$. Moreover, we can define the subalgebraic reduction of $\mathbf{f}$ at any point in $\mathbb{H}_{\mathrm{Ber}}$ by considering the field extensions, see \cite[Section 4]{Faber13I}. For $\xi\in\mathbb{P}^1_{\mathrm{Ber}}$, denote by $\rho_{\xi}(\mathbf{f})$ the subalgebraic reduction of $\mathbf{f}$ at $\xi$. For a holomorphic family $\{N_t\}$ of degree $d\ge 2$ Newton maps, the subalgebraic reduction of the induced map $\mathbf{N}$ at a point $\xi\not\in H_V$ is not semistable. Indeed, if $\xi\not\in H_V^\infty$, where $H_V^\infty\subset\mathbb{P}^1_{\mathrm{Ber}}$ is the convex hull of $V\cup\{\infty\}$, then $\widehat{\rho_{\xi}(\mathbf{N})}$ is a constant \cite[Theorem 3.10]{Nie-1}. Note $\rho_{\xi}(\mathbf{N})$ is a degenerate Newton map. Then we know $\rho_{\xi}(\mathbf{N})$ has a unique hole at $\infty$. Hence $d_{\infty}\rho_{\xi}(\mathbf{N})=d$. If $\xi\in(\pi_{H_V}(\infty), \infty)$, where $\pi_{H_V}(\infty)$ is the projection from $\infty$ to $H_V$, then $\widehat{\rho_{\xi}(\mathbf{N})}$ is a linear map. In fact, in this case $\rho_{\xi}(\mathbf{N})$ has a unique hole at $a\in\mathbb{C}$, and hence $d_{a}\rho_{\xi}(\mathbf{N})=d-1$. So $\rho_{\xi}(\mathbf{N})$ is not semistable.   At the points in $H_V$, we have
\begin{lemma}\label{Berkovich-tree-semistable-reduction}
Let $\{N_t\}$ be a holomorphic family of degree $d\ge 2$ Newton maps and let $\mathbf{N}$ be the associated map for $\{N_t\}$. Then there exists a point $\xi\in V$ such that $\rho_\xi(\mathbf{N})$ is semistable. In particular, let 
$$H^{ss}_V=\{\xi\in H_V:\rho_\xi(\mathbf{N})\in\mathrm{Rat}_d^{ss}\}.$$
Then one of the following holds:
\begin{enumerate}
\item there is a unique point $\xi\in H_V$ such that $\rho_\xi(\mathbf{N})$ is stable. In this case, $H_V^{ss}=\{\xi\}$.
\item there is a point $\xi\in H_V$ such that $\rho_\xi(\mathbf{N})$ is semistable but not stable. In this case, $\rho_{\xi'}(\mathbf{N})$ is not stable for any $\xi'\in H_V$. Moreover, $H_V^{ss}$ is connected.
\end{enumerate}
\end{lemma}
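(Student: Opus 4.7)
The plan is to reduce the lemma to a combinatorial centroid problem on the finite subtree $H_V \subset \mathbb{P}^1_{\mathrm{Ber}}$. First I would establish the key dictionary: for any type II point $\xi \in H_V$ and each tangent direction $\vec{v}$ at $\xi$, let $n_\xi(\vec{v})$ denote the number of elements of $\{\mathbf{r}_1,\dots,\mathbf{r}_d,\infty\}$ contained in $\mathbf{B}_\xi(\vec{v})^-$. Then $\rho_\xi(\mathbf{N})$ is a degenerate Newton map whose holes are exactly the marked points $z_{\vec{v}}$ corresponding to directions $\vec{v}$ with $n_\xi(\vec{v}) \ge 2$, each hole having depth $n_\xi(\vec{v}) - 1$. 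This is obtained by choosing $M_t \in \mathrm{Aut}(\mathbb{C})$ whose induced map sends $\xi_g$ to $\xi$, expanding the factorization $N_t(z) = z - P_t(z)/P_t'(z)$ to identify the limit in $\overline{\mathrm{NM}}_d$, and using Lemma \ref{Newton-Conj-Hole} to match depths with multiplicities of clustered roots (with appropriate care when a cluster contains $\infty$).

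Translating via Lemma \ref{Newton-hole}, $\rho_\xi(\mathbf{N}) \in \mathrm{Rat}_d^{ss}$ iff $\max_{\vec{v}} n_\xi(\vec{v}) \le \lceil d/2 \rceil$, and $\rho_\xi(\mathbf{N}) \in \mathrm{Rat}_d^s$ iff $\max_{\vec{v}} n_\xi(\vec{v}) \le \lfloor d/2 \rfloor$. Next I would invoke the classical tree-centroid theorem applied to $H_{\mathrm{fix}} := \mathrm{Hull}(\{\mathbf{r}_1,\dots,\mathbf{r}_d,\infty\})$, viewed as a tree with $d+1$ unit-weight leaves: there exists a type II point (either a vertex, or a point on an edge) where every direction carries weight at most $\lceil d/2 \rceil$. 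Using that the branch points of $H_{\mathrm{fix}}$ coincide with $V$ by the Berkovich analysis in \cite{Nie-1}, this centroid can be placed at a vertex in $V$ (and in the ``edge-centroid'' case, both endpoints of the edge lie in $V$), which proves the first assertion. When $d$ is even, $\lceil d/2 \rceil = \lfloor d/2 \rfloor$, so the centroid vertex is automatically stable, immediately placing us in case (1).

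For the dichotomy, suppose first that some $\xi_0 \in H_V$ gives a stable reduction. Then for any other $\xi' \in H_V$, the weight at $\xi'$ in the direction toward $\xi_0$ equals $d + 1$ minus the weight at $\xi_0$ in the direction toward $\xi'$, and since the latter is $\le \lfloor d/2 \rfloor$ by stability, the former is $\ge \lceil d/2 \rceil + 1$, strictly exceeding the semistability bound; hence $\xi'$ is not semistable. This forces $H_V^{ss} = \{\xi_0\}$, ruling out edge interiors as well. In the remaining case (which forces $d$ odd), the first part yields a semistable but not stable $\xi \in V$, and I would establish the connectedness of $H_V^{ss}$ through the tree-convexity property $W(\eta) \le \max(W(\xi), W(\xi'))$ of the function $W(\xi) := \max_{\vec{v}} n_\xi(\vec{v})$, where $\eta$ ranges over the path $[\xi, \xi']$ in $H_V$. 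Taking any direction $\vec{u}$ at $\eta$ realizing $W(\eta)$, a three-case split (whether $\vec{u}$ points off-path, toward $\xi$, or toward $\xi'$) shows $\mathbf{B}_\eta(\vec{u})^-$ is contained in a single direction at one of the endpoints, giving the inequality. Sublevel sets of such a function on a tree are connected.

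The main obstacle I anticipate is the first step: rigorously establishing the depth--weight dictionary $d_{z_{\vec{v}}}(\rho_\xi(\mathbf{N})) = n_\xi(\vec{v}) - 1$ uniformly across $\xi \in H_V$, especially when the direction $\vec{v}$ contains $\infty$ so that one of the clustered fixed points is the repelling one rather than a root of $P_t$. Closely related is the identification $V = \{\text{branch points of } H_{\mathrm{fix}}\}$, which underpins the centroid step and must be imported carefully from the Berkovich dynamics developed in \cite{Nie-1}.
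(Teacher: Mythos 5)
Your reduction to a leaf-weight centroid problem on $H_{\mathrm{fix}}$ is a genuinely different and in fact cleaner route than the paper's. The paper proves existence of a semistable vertex by direct contradiction: it assumes every $\xi\in V$ has a hole of depth $\ge (d+1)/2$, isolates the subset $E_\infty\subset V$ where $\infty$ itself carries such a hole, and extracts a counting contradiction by looking at a vertex $\xi\notin E_\infty$ whose heavy direction points only into $E_\infty$. The uniqueness claim in case~(1) is again a two-disk counting argument, and the connectedness in case~(2) is a hands-on verification that any point on a geodesic $[\xi,\xi']$ between two semistable vertices has both holes of depth $\le (d-1)/2$. Your version packages all of this into one invariant $W(\xi)=\max_{\vec v}n_\xi(\vec v)$, translates Lemma~\ref{Newton-hole} into the thresholds $W\le\lceil d/2\rceil$ (semistable) and $W\le\lfloor d/2\rfloor$ (stable), and then uses three tree facts: existence of a centroid, the complementary-weight identity $n_{\xi'}(\vec v_{\to\xi})=d+1-n_\xi(\vec v_{\to\xi'})$ for the dichotomy, and tree-convexity of $W$ for connectedness. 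That is more transparent and would replace the paper's three separate arguments with one framework.

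Two points deserve more care than ``invoke the classical centroid theorem.'' First, for $d$ even the leaf count is $n=d+1$ odd, and you need the strictly sharper bound $W\le(n-1)/2=d/2$ at a \emph{vertex}, not merely $W\le\lceil n/2\rceil$ somewhere on the tree. This does hold, but it needs the parity argument: orient each edge toward its heavy side (strict since $n$ is odd), observe that every vertex has out-degree at most one (two heavy directions would force total weight $>n$), and note the resulting flow on a finite tree must have a sink, where every direction carries $\le(n-1)/2$. Second, as you flag yourself, the whole scheme rests on (a) the depth--weight dictionary $d_{z_{\vec v}}(\rho_\xi(\mathbf N))=n_\xi(\vec v)-1$, which requires the computation of subalgebraic reductions of Newton maps from \cite{Nie-1}, paying attention to the direction containing $\infty$ since the repelling fixed point is not a root of $P_t$; and (b) the identification of $V$ with the branch locus of $H_{\mathrm{fix}}$, also imported from \cite{Nie-1}, without which the centroid you produce is only a point of $H_{\mathrm{fix}}$ rather than of $V$ as the lemma requires. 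Finally, note that your statement ``both endpoints of the edge lie in $V$'' in the edge-centroid case (possible only for $d$ odd) needs the one-line follow-up that those endpoints automatically satisfy $W=(d+1)/2$ as well, since the direction along the balanced edge already carries weight $(d+1)/2$ and no other direction can exceed it without overshooting the total $d+1$.
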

\begin{proof}
We prove the case when $d$ is odd. Similar argument works for even $d$.\par
Suppose $\rho_\xi(\mathbf{N})$ is not semistable for all $\xi\in V$. Then By Lemma \ref{Newton-hole}, at any $\xi\in V$, $\rho_\xi(\mathbf{N})$ has a hole of depth at least $(d+1)/2$. We claim there exists $\xi\in V$ such that $\infty$ is a hole of $\rho_\xi(\mathbf{N})$ with depth at least $(d+1)/2$. Indeed, if the claim does not hold, then there exist an endpoint $\xi'$ of $H_V$ such that all the holes of $\rho_{\xi'}(\mathbf{N})$ have depth at most $(d-1)/2$, which means $\rho_{\xi'}(\mathbf{N})$ is semistable. Define
$$E_\infty=\{\xi\in V: d_\infty(\rho_{\xi}(\mathbf{N}))\ge\frac{d+1}{2}\}.$$
Then $V\setminus E_\infty\not=\emptyset$ since the visible point $\pi_{H_V}(\infty)\in V$ and the $\infty$ is not a hole of $\rho_{\pi_{H_V}(\infty)}(\mathbf{N})$. In fact, all endpoints of $H_V$, except $\pi_{H_V}(\infty)\in V$, are contained in $E_\infty$. For $\xi\in V\setminus E_\infty$, let $h_\xi\in\mathbb{C}$ be the unique hole of $\rho_\xi(\mathbf{N})$ with depth at least $(d+1)/2$ and let $\vec{v}_{\xi}\in T_\xi\mathbb{P}^1_{\mathrm{Ber}}$ be the direction corresponding to $h_\xi$. Now we pick $\xi\in V\setminus E_\infty$ such that $\mathbf{B}(\vec{v}_\xi)^-\cap V\subset E_\infty$ and let $\xi'\in E_\infty$ such that $(\xi',\xi)\cap V=\emptyset$. Since $d_{h_{\xi}}\rho_{\xi}(\mathbf{N})\ge (d+1)/2$, we know there are at least $(d+3)/2$ many $\mathbf{r}_i:=r_i(t)$s in $\mathbf{B}(\vec{v}_\xi)^-$. However, there are at least $(d+1)/2$ many $\mathbf{r}_i$s in $\mathbb{P}^1_{\mathrm{Ber}}\setminus\mathbf{B}(\vec{v}_\xi)^-$ since $d_{\infty}\rho_{\xi'}(\mathbf{N})\ge(d+1)/2$. It is a contradiction. Thus there exists $\xi\in V$ such that $\rho_\xi(\mathbf{N})$ is semistable.\par 
Now we suppose $\xi\in V$ such that $\rho_{\xi}(\mathbf{N})$ is stable. We show for any other points $\xi'\not=\xi$ in $H_V$, the subalgebraic reductions $\rho_{\xi'}(\mathbf{N})$ are not semistable. It is sufficient to show $\rho_{\xi'}(\mathbf{N})$ is not semistable for $\xi'\in V\setminus\{\xi\}$. Indeed, at any point $\xi''\in H_V\setminus V$, the subalgebraic reduction $\rho_{\xi''}(\mathbf{N})$ has exact two holes and one hole has depth at most $(d-3)/2$. Otherwise, let $h\in\mathbb{C}$ respond to the direction $\vec{v}\in T_\xi\mathbb{P}^1_{\mathrm{Ber}}$ with $\xi''\in\mathbf{B}_{\xi}(\vec{v})^-$. Then 
$$d_{h}(\rho_{\xi}(\mathbf{N}))\ge\frac{d-1}{2},$$ 
which contradicts with the stability of $\rho_{\xi}(\mathbf{N})$. Thus the other hole of $\rho_{\xi''}(\mathbf{N})$ has depth at least $(d-1)/2$, which means $\rho_{\xi''}(\mathbf{N})$ is not semistable. Now suppose there exists $\xi'\in V$ such that $\rho_{\xi'}(\mathbf{N})$ is semistable. Let $\vec{v}_1\in T_\xi\mathbb{P}^1_{\mathrm{Ber}}$ be the direction such that $\xi''\in\mathbf{B}_{\xi}(\vec{v}_1)^-$, and let $\vec{v}_2\in T_{\xi'}\mathbb{P}^1_{\mathrm{Ber}}$ be the direction such that $\xi\in\mathbf{B}_{\xi'}(\vec{v}_2)^-$. Note 
$$\mathbf{B}_{\xi}(\vec{v}_1)^-\cup\mathbf{B}_{\xi'}(\vec{v}_2)^-=\mathbb{P}^1_{\mathrm{Ber}}.$$
We may assume $\infty\in\mathbf{B}_{\xi}(\vec{v}_1)^-$. Since $\rho_{\xi}(\mathbf{N})$ is stable, there are at most $(d-3)/2$ many $\mathbf{r}_i$s in $\mathbf{B}_{\xi}(\vec{v}_1)^-$. Since $\rho_{\xi'}(\mathbf{N})$ is semistable, there are at most $(d+1)/2$ many $\mathbf{r}_i$s in $\mathbf{B}_{\xi'}(\vec{v}_2)^-$. Thus there are $d-1$ many $\mathbf{r}_i$s in $\mathbb{P}^1_{\mathrm{Ber}}$. It is a contradiction. Thus $\rho_{\xi'}(\mathbf{N})$ is not semistable.\par 
Suppose $\xi\in V$ such that $\rho_\xi(\mathbf{N})$ is semistable but not stable. Then by $(1)$, we know $\rho_{\xi'}(\mathbf{N})$ is not stable for any $\xi'\in H_V$. The remaining is to show the set $H_V^{ss}$ is connected. If $H_V^{ss}=\{\xi\}$, it is done. Now we suppose $\xi'\not=\xi$ in $H_V^{ss}$. We show the segment $[\xi,\xi']\subset H_V^{ss}$. If $\xi'$ is on an edge of $H_V$ attached $\xi$, then $[\xi,\xi']\subset H_V^{ss}$ since any subalgebraic reduction of $\mathbf{N}$ at a point in $(\xi,\xi')$ is the same as $\rho_{\xi'}(\mathbf{N})$. Now we suppose $\xi'$ is on an edge of $H_V$ attached $\xi$. We only need to show the case when $\xi'\in V$ and $(\xi,\xi')$ is an edge of $H_V$. Let $\xi''\in(\xi,\xi')$. Then $\rho_{\xi''}(\mathbf{N})$ has exact $2$ holes. Let $\vec{v},\vec{v}'\in T_{\xi''}\mathbb{P}^1_{\mathrm{Ber}}$ be the directions such that $\xi\in\mathbf{B}_{\xi''}(\vec{v})^-$ and $\xi'\in\mathbf{B}_{\xi''}(\vec{v}')^-$, respectively. Let $\vec{v}_\xi\in T_{\xi}\mathbb{P}^1_{\mathrm{Ber}}$ be the direction such that $\xi'\in\mathbf{B}_{\xi}(\vec{v}_\xi)^-$ and let $\vec{v}_{\xi'}\in T_{\xi'}\mathbb{P}^1_{\mathrm{Ber}}$ be the direction such that $\xi\in\mathbf{B}_{\xi'}(\vec{v}_{\xi'})^-$. Then we have 
$$\mathbf{B}_{\xi''}(\vec{v})^-\cap\{\mathbf{r}_1,\cdots,\mathbf{r}_d,\infty\}=\mathbf{B}_{\xi'}(\vec{v}_{\xi'})^-\cap\{\mathbf{r}_1,\cdots,\mathbf{r}_d,\infty\}$$
and 
$$\mathbf{B}_{\xi''}(\vec{v}')^-\cap\{\mathbf{r}_1,\cdots,\mathbf{r}_d,\infty\}=\mathbf{B}_{\xi}(\vec{v}_{\xi})^-\cap\{\mathbf{r}_1,\cdots,\mathbf{r}_d,\infty\}.$$
Let $h,h',h_1$ and $h_2\in\mathbb{C}$ correspond to the tangent vectors $\vec{v},\vec{v}',\vec{v}_{\xi}$ and $\vec{v}_{\xi'}$, respectively. Then by Lemma \ref{Newton-hole}, we have 
$$d_h(\rho_{\xi''}(\mathbf{N})=d_{h_2}(\rho_{\xi'}(\mathbf{N}))\le\frac{d-1}{2}$$ and 
$$d_{h'}(\rho_{\xi''}(\mathbf{N})=d_{h_1}(\rho_{\xi}(\mathbf{N}))\le\frac{d-1}{2}$$
since $\rho_{\xi}(\mathbf{N})$ and $\rho_{\xi'}(\mathbf{N})$ are semistable. Thus, again by Lemma \ref{Newton-hole},  we know $\rho_{\xi''}(\mathbf{N})$ is semistable. Hence $\xi''\in H_V^{ss}$. So $H_V^{ss}$ is connected.
\end{proof}
\begin{remark}
In \cite{Rumely15}, Rumely studied the minimal resultant locus $\mathrm{MinResLoc}(\phi)$ for a rational map $\phi\in K(z)$ over any arbitrary non-Archimedean field $K$. He proved $\mathrm{MinResLoc}(\phi)$ is either a singleton or a segment. Later, he gave characterizations of the set $\mathrm{MinResLoc}(\phi)$ by claiming that $\mathrm{MinResLoc}(\phi)$ consists of points $\xi\in\mathbb{P}^1_{\mathrm{Ber}}$ where $\phi$ has semistable subalgebraic reductions \cite[Theorem B]{Rumely17}. It is still unclear how to find out $\mathrm{MinResLoc}(\phi)$ precisely. In our case, for Newton maps $\mathbf{N}\in\mathbb{L}(z)$, Lemma \ref{Berkovich-tree-semistable-reduction} allows us to find  $\mathrm{MinResLoc}(\mathbf{N})$ in finitely many steps. 
\end{remark}
Now we give some examples to illustrate Lemma \ref{Berkovich-tree-semistable-reduction}.
\begin{example}
Cubic Newton maps.\par
Consider $r(t)=\{0,1,t\}$. Then the set $V=\{\xi_g, \xi_{0,|t|}\}$. At the Gauss point $\xi_g$, the subalgebraic reduction $\rho_{\xi_g}(\mathbf{N})$ is semistable. Moreover, for any point $\xi\in[\xi_{0,|t|},\xi_g]$, $\rho_{\xi}(\mathbf{N})$ is semistable. And there is no point $\xi$ such that $\rho_{\xi}(\mathbf{N})$ is stable.
\end{example}
\begin{example}
Quartic Newton maps.\par
Consider $r(t)=\{0,1,t^{-1},2t^{-1}\}$. Then the set $V=\{\xi_g, \xi\}$, where $\xi=\xi_{0,|t^{-1}|}$. At the Gauss point $\xi_g$, the subalgebraic reduction $\rho_{\xi_g}(\mathbf{N})$ is not (semi)stable. But at the point $\xi$, the subalgebraic reduction $\rho_{\xi}(\mathbf{N})$ is stable. Furthermore, $\xi$ is the only point where the subalgebraic reduction of $\mathbf{N}$ is (semi)stable.
\end{example}
\begin{example}
Quintic Newton maps.\par
Consider $r(t)=\{t,2t,1+t,1+2t,t^{-1}\}$. Then 
$$V=\{\xi_g, \xi_{0,|t|},\xi_{1,|t|},\xi_{0,|t^{-1}|}\}.$$ 
At the points $\xi_{0,|t|},\xi_{1,|t|}$ and $\xi_{0,|t^{-1}|}$, the subalgebraic reductions of $\mathbf{N}$ are not semistable. But at the point $\xi_g$, the subalgebraic reduction $\rho_{\xi_g}(\mathbf{N})$ is semistable but not stable. In fact, $\xi_g$ is the only point where the subalgebraic reduction of $\mathbf{N}$ is semistable.
\end{example}
Note the information about the edges of the subtree $H_V$ does not affect the resulted(semi)stable maps. Thus, from Lemma \ref{Berkovich-tree-semistable-reduction}, we can associate each element in $\mathcal{T}_d$ with a GIT equivalence class of (semi)stable maps. Combining with the topology on the space $\mathcal{T}_d$ and on the GIT compactification $\overline{\mathrm{nm}}_d$. We have 
\begin{proposition}\label{Berkovich-tree-GIT}
For $d\ge 2$, the map 
$$\psi:\mathcal{T}_d\to\overline{\mathrm{nm}}_d,$$ 
sending $[\mathbf{T}]$ to $[\rho_{\xi}(\mathbf{N})]_{\mathrm{GIT}}$, where $\xi\in V$ with the subalgebraic reduction $\rho_{\xi}(\mathbf{N})\in\mathrm{Rat}_d^{ss}$, is a continuous surjection.
\end{proposition}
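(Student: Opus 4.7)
The plan is to verify three things: that $\psi$ is well defined, that it is surjective, and that it is continuous. The key ingredients are Lemma \ref{Berkovich-tree-semistable-reduction} (to locate where semistable reductions occur on $H_V$), Proposition \ref{semi-singleton} (to collapse the strictly semistable ambiguity), Proposition \ref{Berkovich-tree-moduli-space} (to pass between $\mathcal{T}_d$ and $\widehat{\mathcal{M}}^\ast_{0,d}$), and density of $\mathrm{nm}_d$ in $\overline{\mathrm{nm}}_d$.

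For well-definedness, the first issue is the choice of $\xi \in V$ with $\rho_\xi(\mathbf{N})$ semistable. Case (1) of Lemma \ref{Berkovich-tree-semistable-reduction} produces a unique stable $\xi$, so there is nothing to check. In case (2), every semistable $\xi$ gives a strictly semistable reduction, and all such reductions have the same GIT class by Proposition \ref{semi-singleton}. The second issue is independence of the representative $\mathbf{T}$ of its equivalence class $[\mathbf{T}]$. Since equivalence in $\mathcal{T}_d$ is defined by equality of the induced stable nodal curve $\mathbf{\Sigma}\in\widehat{\mathcal{M}}^\ast_{0,d}$, I would argue that the marked points on the sphere attached at a semistable vertex $\xi$, together with the depths of the holes (which record how many roots collide into each tangent direction), determine the degenerate Newton map $\rho_\xi(\mathbf{N})$ up to affine conjugacy via Proposition \ref{Newton-Conj-fixed-pt-depth}; but these data depend only on $\mathbf{\Sigma}$, not on the particular lift.

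For surjectivity, let $[N]_{\mathrm{GIT}}\in\overline{\mathrm{nm}}_d$ be represented by some $N\in\overline{\mathrm{NM}}_d\cap\mathrm{Rat}_d^{ss}$. If $N\in\mathrm{NM}_d$, take the constant family $N_t\equiv N$; the induced tree is a single sphere with marked roots of $N$ and $\infty$, and the Gauss point's subalgebraic reduction is $N$ itself. If $N\in\partial\overline{\mathrm{NM}}_d\cap\mathrm{Rat}_d^{ss}$, choose a holomorphic family $\{N_t\}\subset\mathrm{NM}_d$ with $N_t\to N$ in $\mathbb{P}^{2d+1}$ (possible since $\mathrm{NM}_d$ is dense in $\overline{\mathrm{NM}}_d$); by construction the subalgebraic reduction of the associated $\mathbf{N}$ at the Gauss point is $N$ up to an element of $\mathrm{Aut}(\mathbb{C})$, so some $\xi\in V$ has $\rho_\xi(\mathbf{N})$ in the GIT class of $N$. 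Applying Lemma \ref{Berkovich-tree-semistable-reduction} to the resulting tree yields an $[\mathbf{T}]$ with $\psi([\mathbf{T}])=[N]_{\mathrm{GIT}}$.

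For continuity, suppose $[\mathbf{T}_n]\to[\mathbf{T}]$ in $\mathcal{T}_d$; by the topology on $\mathcal{T}_d$ this is equivalent to $\phi([\mathbf{T}_n])\to\phi([\mathbf{T}])$ in $\widehat{\mathcal{M}}^\ast_{0,d}$. I would realize each $[\mathbf{T}_n]$ by a family whose roots $\{r^{(n)}_1(t),\ldots,r^{(n)}_d(t)\}$ depend continuously on $n$ up to affine normalization, pick representatives so the roots approach the marked configuration of $\mathbf{\Sigma}$, and then read off $\psi([\mathbf{T}_n])$ as the GIT class of the subalgebraic reduction at the vertex where the semistable reduction lives. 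Locally one can choose coordinates on the sphere at a given vertex of $\mathbf{T}$ so that the collision pattern of roots converges to the collision pattern for $[\mathbf{T}]$, and then the subalgebraic reductions converge in $\mathbb{P}^{2d+1}$; composing with the GIT quotient gives convergence in $\overline{\mathrm{nm}}_d$. The main obstacle here is exactly the global matching of the semistable vertex of $\mathbf{T}_n$ with that of $\mathbf{T}$: as $n\to\infty$ the semistable vertex can lie on different components of the limiting nodal tree, or a sequence of stable reductions can degenerate to a strictly semistable one, in which case one must invoke Proposition \ref{semi-singleton} to see that the GIT limit is still the unique strictly semistable class. Handling this case analysis systematically — essentially verifying that the GIT classes of subalgebraic reductions respect the combinatorics of the boundary strata of $\widehat{\mathcal{M}}^\ast_{0,d}$ — is where the real work of the proof will lie.
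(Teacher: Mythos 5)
Your well-definedness and surjectivity arguments are on the right track and align with the tools the paper relies on: Lemma~\ref{Berkovich-tree-semistable-reduction} to locate the semistable vertex, Proposition~\ref{semi-singleton} to collapse the strictly semistable ambiguity, and Proposition~\ref{Newton-Conj-fixed-pt-depth} to pass from the combinatorial data at the semistable vertex (directions plus multiplicities) to an affine conjugacy class of degenerate Newton maps. One small point worth making explicit in the surjectivity argument: when $N\in\partial\overline{\mathrm{NM}}_d\cap\mathrm{Rat}_d^{ss}$ and $N_t\to N$, the reduction at the Gauss point is exactly $N$, hence $\xi_g\in H_V^{ss}$; in case~(1) of Lemma~\ref{Berkovich-tree-semistable-reduction} the unique stable point of $H_V$ is then $\xi_g$ itself (and it lies in $V$ by the first part of the lemma), while in case~(2) the chosen $\xi\in V$ need not equal $\xi_g$ but still lands in the one strictly semistable GIT class by Proposition~\ref{semi-singleton}.

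The genuine gap is the continuity step, which you flag but do not carry out. Since $\psi$ is constructed pointwise from the combinatorics of each tree, continuity is the entire content of the proposition, and ``the real work will lie there'' is not a proof. The missing argument needs to show that if $[\mathbf{T}_n]\to[\mathbf{T}]$ in $\mathcal{T}_d$, one can choose, for each $n$, a representative holomorphic family $\{N^{(n)}_t\}$ normalized (by a further affine change of coordinate depending on $n$, moving the semistable vertex of the $n$-th tree to the Gauss point) so that the semistable subalgebraic reductions $\rho_{\xi_n}(\mathbf{N}^{(n)})$ converge in $\mathbb{P}^{2d+1}$ to $\rho_\xi(\mathbf{N})$ or to some $M^{-1}\circ\rho_\xi(\mathbf{N})\circ M$; then one uses that the quotient map $\mathrm{Rat}_d^{ss}\to\overline{\mathrm{rat}}_d$ is continuous, together with Proposition~\ref{semi-singleton} when a sequence of stable vertices degenerates into the strictly semistable stratum. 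You correctly identify the danger cases (semistable vertex migrating to a different component of the nodal limit; stable classes degenerating to the unique strictly semistable class), but you need to actually verify that the normalized reductions converge in each case rather than describe the case analysis as future work. It is worth noting that the paper itself states this proposition with essentially no proof --- it derives it from Lemma~\ref{Berkovich-tree-semistable-reduction} plus ``combining with the topology'' in a single sentence --- so your proposal gives more detail than the paper in the places it does complete, but still leaves unproven the step that carries the actual mathematical weight.
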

Note $\psi(\mathcal{S}_d)=\mathrm{nm}_d$ and $\psi|_{\mathcal{S}_d}$ has degree $(d-2)!$. But on $\mathcal{T}_d\setminus\mathcal{S}_d$, the map $\psi$ may be an infinity-to-one map.
\begin{example}
Sextic Newton maps.\par
For $c\in\mathbb{C}\setminus\{0,1\}$, let $r_c(t)=\{0,1,c,t^{-1},2t^{-1},3t^{-1}\}$. Then $V=\{\xi_g,\xi_{0,|t^{-1}|}\}$. Note the subalgebraic reduction 
$$\rho_{\xi_{0,|t^{-1}|}}(\mathbf{N}_c)=N_{\{0,0,0,1,2,3\}}\in\mathrm{Rat}_6^s$$ 
is independent on $c\in\mathbb{C}\setminus\{0,1\}$. Note $[\mathbf{T}_c]\not=[\mathbf{T}_{c'}]$ if $c\not=c'$. But 
$$\psi([\mathbf{T}_c])=\psi([\mathbf{T}_{c'}])=[N_{\{0,0,0,1,2,3\}}]_{\mathrm{GIT}}\in\overline{\mathrm{nm}}_d.$$
\end{example}
\begin{example}
Newton maps of odd degrees $d\ge 3$.\par
By Proposition \ref{semi-singleton}, for every element $N\in\overline{\mathrm{NM}}_d\cap(\mathrm{Rat}_d^{ss}\setminus\mathrm{Rat}_d^s)$, we have 
$$[N]_{\mathrm{GIT}}=\big[X^\frac{d-1}{2}Y^\frac{d-1}{2}[(d-1)X:(d+1)Y]\big]_{\mathrm{GIT}}.$$
Thus, for a holomorphic family $\{N_t\}$ of Newton's method, if there exists $\xi\in V$ such that $\rho_{\xi}(\mathbf{N})\in\mathrm{Rat}_d^{ss}\setminus\mathrm{Rat}_d^s$, then 
$$\psi([\mathbf{T}])=\big[X^\frac{d-1}{2}Y^\frac{d-1}{2}[(d-1)X:(d+1)Y]\big]_{\mathrm{GIT}}.$$
If $d=3$, then there are three equivalent classes in $\mathcal{T}_3\setminus\mathcal{S}_3$ and all of them are mapped to $[XY[X:2Y]]_{\mathrm{GIT}}$ under $\psi$. If $d\ge 5$, then there are infinitely many elements in $\mathcal{T}_d\setminus\mathcal{S}_d$ that are mapped to $[X^{(d-1)/2}Y^{(d-1)/2}[(d-1)X:(d+1)Y]]_{\mathrm{GIT}}$ under $\psi$.
\end{example}
Combining with Propositions \ref{Berkovich-tree-moduli-space} and \ref{Berkovich-tree-GIT}, we have 
\begin{theorem}
For $d\ge 2$, there is a natural continuous surjection 
$$\kappa:\widehat{\mathcal{M}}^\ast_{0,d}\to\overline{\mathrm{nm}}_d$$
such that $\kappa|_{\mathcal{M}^\ast_{0,d}}$ is $(d-2)!$-to-$1$. 
\end{theorem}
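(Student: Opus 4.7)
The plan is to define $\kappa$ as the composition $\psi\circ\phi^{-1}$, where $\phi:\mathcal{T}_d\to\widehat{\mathcal{M}}^\ast_{0,d}$ is the homeomorphism from Proposition~\ref{Berkovich-tree-moduli-space} and $\psi:\mathcal{T}_d\to\overline{\mathrm{nm}}_d$ is the continuous surjection from Proposition~\ref{Berkovich-tree-GIT}. Since the topology on $\mathcal{T}_d$ was defined precisely so that $\phi$ is a homeomorphism, $\phi^{-1}$ is automatically continuous, and $\kappa$ inherits continuity and surjectivity directly from $\psi$. The naturality of $\kappa$ is then built into the construction, since both $\phi$ and $\psi$ are canonical.

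For the degree claim on the dense open set $\mathcal{M}^\ast_{0,d}\subset\widehat{\mathcal{M}}^\ast_{0,d}$, I would first identify $\phi^{-1}(\mathcal{M}^\ast_{0,d})$ with $\mathcal{S}_d$, the subset of $\mathcal{T}_d$ consisting of classes with trivial Berkovich tree (a single marked Riemann sphere). This is essentially by construction, since smooth marked rational curves in the Deligne--Mumford moduli space correspond exactly to single-sphere classes. On $\mathcal{S}_d$, the subalgebraic reduction at the unique vertex of $H_V$ recovers the underlying Newton map of the family, so $\psi|_{\mathcal{S}_d}$ agrees, under the identification $\mathrm{nm}^\ast_d\cong\mathcal{M}^\ast_{0,d}$ of Proposition~\ref{Newton-method-affine-moduli}, with the natural forgetful map $\mathrm{nm}^\ast_d\to\mathrm{nm}_d$. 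That forgetful map has degree $(d-2)!$ as noted after the definition of $\mathrm{nm}^\ast_d$ in the introduction: normalizing by $\mathrm{Aut}(\mathbb{C})$ sends two of the marked roots to $\{0,1\}$, leaving $(d-2)!$ orderings of the remaining roots that all project to the same class in $\mathrm{nm}_d$.

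There is really no hard step here, as the theorem is essentially a packaging of the two preceding propositions together with the structural results of Section~\ref{Deligne-Mumford}. The only verification that requires any care is that $\phi|_{\mathcal{S}_d}$ intertwines $\psi|_{\mathcal{S}_d}$ with the forgetful map; this follows because, for a holomorphic family whose associated Berkovich tree is trivial, the underlying Newton map is recovered as the subalgebraic reduction at the Gauss point, and the ordering of the roots $\mathbf{r}_1,\ldots,\mathbf{r}_d$ corresponds under $\phi$ to the marking of the sphere. I would close by remarking, using the sextic and odd-degree examples immediately preceding the theorem, that $\kappa$ need not have finite fibers over the boundary $\overline{\mathrm{nm}}_d\setminus\mathrm{nm}_d$, so the $(d-2)!$-to-$1$ assertion is genuinely restricted to the interior.
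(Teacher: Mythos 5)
Your proposal is correct and takes exactly the same route as the paper: the paper's proof is the one-line statement $\kappa=\psi\circ\phi^{-1}$, relying on Propositions~\ref{Berkovich-tree-moduli-space} and \ref{Berkovich-tree-GIT} and on the observation (stated just before the theorem) that $\psi(\mathcal{S}_d)=\mathrm{nm}_d$ with $\psi|_{\mathcal{S}_d}$ of degree $(d-2)!$. Your expanded verification of the degree claim and of the identification $\phi^{-1}(\mathcal{M}^\ast_{0,d})=\mathcal{S}_d$ simply spells out the details the paper leaves implicit.
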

\begin{proof}
Set $\kappa=\psi\circ\phi^{-1}$, where $\phi$ and $\psi$ as in Propositions \ref{Berkovich-tree-moduli-space} and \ref{Berkovich-tree-GIT}, respectively.
\end{proof}

\section*{Acknowledgements}
This work is part of the author's PhD thesis. He would like to thank his advisor Kevin Pilgrim for fruitful discussions. The author also thanks Jan Kiwi, Laura DeMarco and Matthieu Arfeux for valuable comments.
\bibliographystyle{plain}
\bibliography{references}
\end{document}